\let\oldcite\cite                                  
\newtheorem{thm}{Theorem}[section]
\newtheorem{cor}[thm]{Corollary}
\newtheorem{lem}[thm]{Lemma}
\newtheorem{prop}[thm]{Proposition}
\theoremstyle{definition}
\newtheorem{defn}[thm]{Definition}
\theoremstyle{remark}
\newtheorem{rem}[thm]{Remark}
\numberwithin{equation}{section} \theoremstyle{remark}
\newtheorem{ex}[thm]{Example}
\newcommand{\X}{\mathcal{X}}
\newcommand{\Y}{\mathcal{Y}}
\newcommand{\A}{\mathcal{A}}
\newcommand{\mfC}{\mathfrak{C}}
\newcommand{\mfB}{\mathfrak{B}}
\newcommand{\mfU}{\mathfrak{U}}
\newcommand{\mfS}{\mathfrak{S}}
\newcommand{\bbX}{\mathbb{X}}
\newcommand{\bbY}{\mathbb{Y}}
\newcommand{\bbE}{\mathbb{E}}
\newcommand{\bbU}{\mathbb{U}}
\newcommand{\Ym}{\Y_{mod}}
\newcommand{\liminv}{\operatorname{lim}}
\newcommand{\limdir}{\varinjlim}
\newcommand{\al}{\alpha}
\newcommand{\be}{\beta}
\newcommand{\lra}{\longrightarrow}
\newcommand{\llra}[1]{\stackrel{#1}{\lra}}
\newcommand{\x}{\times}
\newcommand{\Top}{\mathsf{Top}}
\newcommand{\Para}{\mathsf{Para}}
\newcommand{\TopSt}{\mathfrak{TopSt}}
\newcommand{\HPTopSt}{\mathfrak{HPTopSt}}
\newcommand{\HoSt}{\mathfrak{HoSt}}
\newcommand{\PsSt}{\mathfrak{PsSt}}
\newcommand{\ParSt}{\mathfrak{ParSt}}
\newcommand{\HPParSt}{\mathfrak{HPParSt}}
\newcommand{\sfS}{\mathsf{S}}
\newcommand{\sfE}{\mathsf{E}}
\newcommand{\sfD}{\mathsf{D}}
\newcommand{\sfC}{\mathsf{C}}
\newcommand{\sfB}{\mathsf{B}}
\newcommand{\Ob}{\operatorname{Ob}}
\newcommand{\Hom}{\operatorname{Hom}}
\newcommand{\Map}{\operatorname{Map}}
\newcommand{\Roof}{\operatorname{Span}}
\newcommand{\id}{\operatorname{id}}
\newcommand{\pr}{\operatorname{pr}}
\def\smashedlongrightarrow{\setbox0=\hbox{$\longrightarrow$}\ht0=1pt\box0}
\def\risom{\buildrel\sim\over{\smashedlongrightarrow}}
\def\smashedst{\setbox0=\hbox{$\rightrightarrows$}\ht0=4pt\box0}
\newcommand{\sst}[1]{\stackrel{#1}{\smashedst}}
\newcommand{\hpara}{\text{hoparacompact}}
\newcommand{\hcw}{\text{hoCW}}
\newcommand{\sth}{F}
\begin{document}

\title{Homotopy types of topological stacks}

\author{Behrang Noohi}

\begin{abstract}
 We define the notion of {\em classifying space} of a topological stack and 
 show that every topological stack $\X$ has a classifying space $X$ 
 which is a topological space well-defined  up to weak homotopy equivalence. 
 Under a certain  paracompactness condition on
 $\X$, we show that $X$ is actually well-defined up to homotopy equivalence.
 These results are formulated in terms of functors from the category
 of topological stacks to the (weak) homotopy category of topological
 spaces.  We prove similar results for (small) diagrams of topological stacks.
\end{abstract}

\maketitle%

\section{Introduction}

The category of topological stacks accommodates various classes of
objects simultaneously: 1- orbifolds, 2- gerbes, 3- spaces with an
action of a topological group, 4- Artin stacks, 5- Lie groupoids, 6-
complexes-of-groups, 7- foliated manifolds. And, of course, the
category of topological spaces is a full subcategory of the category
of topological stacks.

In each of the above cases, tools from algebraic topology have been
adopted, to various extents, to study the objects in question. For
instance, (3) is the subject of equivariant algebraic topology.
Behrend has studied singular (co)homology of Lie groupoids in
\cite{Behrend}. Homotopy invariants and (co)homology theories for
orbifolds have been developed by Haefliger, Moerdijk,
Thurston,\dots. The case of complexes-of-groups has been studied
extensively by Bass, Bridson, Haefliger, Serre, Soul\'e,\dots

The notion of {\em classifying space} introduced in this paper 
(Definition \ref{D:classifying}) provides a unified
way to do algebraic topology on topological stacks.
By definition, a classifying space for a topological stack $\X$ is a topological space
$\Theta(\X)$ togeotehr with a morphism $\varphi \:
  \Theta(\X) \to \X$  which is a universal weak equivalence, in the sense that,
   for every map $T \to \X$ from a topological space
  $T$, the base extension $\varphi_T \: \Theta(\X)\times_{\X}T \to
  T$ of $\varphi$ is a weak equivalence of topological spaces. 
  
  The crucial feature of the definition of the classifying space is the existence
of the map $\varphi \:  \Theta(\X) \to \X$. This map provides a link between the
algebraic topology of $\X$ and that of the topological space $\Theta(\X)$ (say, by pull-back
and push-forward along $\varphi$). For an application of this, the reader can consult
\cite{BGNX} where this result is used to
develop an intersection theory and a theory of Thom classes on stacks;
for another application see \cite{EbGi}.

One of the main results of this paper is the following.

\begin{thm}{\label{T:1}}
  Every topological stack $\X$ admits an atlas  $\varphi \:
  \Theta(\X) \to \X$ which is a classifying space for $\X$.
\end{thm}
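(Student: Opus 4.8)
The plan is to realize a groupoid presentation of $\X$ and to take $\varphi$ to be the resulting augmentation. First I would choose an atlas $p \colon U \to \X$, presenting $\X$ by the topological groupoid $[R \rightrightarrows U]$ with $R = U \times_{\X} U$, and let $X_\bullet$ be its nerve, the simplicial space with $X_n = R \times_U \cdots \times_U R$ ($n$ factors). I then set $\Theta(\X) := \|X_\bullet\|$, the \emph{fat} (thick) geometric realization; using the fat realization avoids imposing properness or cofibrancy hypotheses on the face and degeneracy maps. The groupoid furnishes a coherent augmentation of $X_\bullet$ to the constant simplicial stack $\X$: the arrows of $R$ are precisely the $2$-isomorphisms needed to glue the maps $X_n \times \Delta^n \to \X$ across faces, so that the whole realization maps to $\X$. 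Realizing this augmentation produces the morphism $\varphi \colon \Theta(\X) \to \X$.

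Because $U$ appears as the $0$-skeleton of $\Theta(\X)$ and $p$ is a (representable, locally split) epimorphism, $\varphi$ is an epimorphism, and its representability and local-section property will fall out of the fiber-product computation below; thus $\varphi$ is an atlas. It remains to verify that it is a universal weak equivalence. Fix a map $f \colon T \to \X$ from a space $T$ and consider $\varphi_T \colon \Theta(\X) \times_{\X} T \to T$. The first key point is that fat realization is compatible with this base change, $\|X_\bullet\| \times_{\X} T \cong \|X_\bullet \times_{\X} T\|$, which holds because pullback along $f$ is compatible with the colimit defining the realization (in the point-set setting one works in a convenient category of spaces where $\times$ commutes with the relevant colimits). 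Now $X_\bullet \times_{\X} T$ is the pullback of the presenting groupoid along $f$; since $\X \times_{\X} T = T$ is a space, this pullback is the \v{C}ech (banal) groupoid of the cover $U_T := U \times_{\X} T \to T$, a map of \emph{spaces} that admits local sections. Hence $\varphi_T$ is identified with the augmentation $\|\check{C}(U_T \to T)\| \to T$, and the theorem reduces to the following statement, which I expect to be the main obstacle:

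\begin{quote}
\emph{If $q \colon V \to T$ is a map of topological spaces admitting local sections, then the augmentation $\|\check{C}(q)\| \to T$ from the fat realization of its \v{C}ech nerve is a weak homotopy equivalence.}
\end{quote}

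To prove this I would argue on homotopy groups. Any element of $\pi_k(T)$ (and any relative homotopy witnessing injectivity) is represented by a map from a compact space, which, after subdivision, factors through finitely many opens over each of which $q$ has a section; the local sections then lift the representing map into $\|\check{C}(q)\|$ and contract it along the extra degeneracies that a local section provides. Equivalently one can run the descent/Mayer--Vietoris spectral sequence of the simplicial space, whose $E_1$-term is the \v{C}ech complex of the cover and which converges to the homology of $T$. The delicate point is that, absent a partition of unity, the local sections cannot in general be assembled into a global homotopy inverse, so one obtains only a \emph{weak} equivalence; this is exactly why the present theorem is phrased with universal \emph{weak} equivalences, the genuine homotopy statement being reserved for the paracompact case. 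Granting the lemma, $\varphi_T$ is a weak equivalence for every $T \to \X$, so $\varphi$ is a universal weak equivalence and $\Theta(\X)$ is a classifying space. (Finally, one checks that a refinement of atlases induces a levelwise weak equivalence of nerves, hence a weak equivalence of realizations over $\X$, so $\Theta(\X)$ is well defined up to weak equivalence.)
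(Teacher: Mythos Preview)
Your approach is genuinely different from the paper's, and as written it has a real gap.

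The paper does not use the nerve or any geometric realization. It takes $\Theta(\X)$ to be the Haefliger--Milnor classifying space $B\bbX$ (the infinite-join model) of a groupoid presentation $\bbX=[R\rightrightarrows X_0]$, and obtains $\varphi\colon B\bbX\to\X$ from an explicit $\bbX$-torsor $E\bbX\to B\bbX$. The only computation is an explicit fiberwise deformation retraction showing that the map $E\bbX\to X_0$ (which is the base change of $\varphi$ along the atlas $X_0\to\X$) is \emph{shrinkable} in Dold's sense. Hence $\varphi$ is \emph{locally shrinkable}, a conclusion strictly stronger than ``universal weak equivalence'' and one the paper actually needs later for hoparacompact stacks and genuine homotopy types.

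The paper in fact discusses your model explicitly (\S4.2 and Remark~6.2): for the ordinary realization $|\bbX|$ one cannot in general produce a map $|\bbX|\to\X$, because the candidate torsor over $|\bbX|$ need not admit local sections when the source, target, and identity maps of $\bbX$ are not locally well behaved. Your switch to the fat realization and the augmentation description is an attempt to bypass this, but the same point-set obstruction resurfaces at your ``first key point,'' the claimed isomorphism $\|X_\bullet\|\times_{\X}T\cong\|X_\bullet\times_{\X}T\|$. This asks the colimit defining the fat realization to commute with pullback along $T\to\X$; in $\Top$ (all topological spaces, which is the ambient category here) such commutation is not automatic, and your parenthetical retreat to a convenient category changes the hypotheses of the theorem. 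Without this step you have established neither that $\varphi$ is representable nor the reduction to your \v{C}ech-nerve lemma, so the argument does not close. The Haefliger--Milnor model is chosen precisely so that the torsor, and hence $\varphi$, are given by an explicit formula, and shrinkability is verified by a two-line homotopy with no colimit/pullback interchange required.
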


The classifying space $\Theta(\X)$ turns out to be unique up to weak homotopy equivalenvce.
In fact, we have  a functor $\Theta
\: \TopSt \to \Top_{w.e.}$ to the weak homotopy category
$\Top_{w.e.}$ of topological spaces which to a topological stack
$\X$ associates  its {\em weak homotopy type} $\Theta(\X)$; see
Theorem \ref{T:whtpytype}.

In the case where $\X$ has a groupoid presentation with certain
paracompactness properties ($\S$\ref{SS:homotopically}),  the above
functor can be lifted to a functor  $\Theta \: \TopSt \to
\Top_{h.e.}$ which associates to such a topological stack an actual {\em
homotopy type} (Theorem \ref{T:htpytype}). This applies to all
differentiable stacks and, more generally, to any stack that admits a
presentation by a metrizable groupoid; see Proposition
\ref{P:paracompact}. This result is useful when defining (co)homology
theories that are only homotopy invariant. (For example, certain
sheaf cohomology theories or certain \v{C}ech type theories are only
invariant under homotopy equivalences.)

The next main result in the paper is the generalization of Theorem
\ref{T:1}  to diagrams of topological stacks ($\S$\ref{S:Diagram}).
The following theorem is a corollary of Theorem
\ref{T:diagramadjoint}.

\begin{thm}
  Let $P \: \sfD \to \TopSt$ be a diagram of topological stacks
  indexed by a small category $\sfD$. Then, there is a diagram $Q \:
  \sfD \to  \Top$ of topological spaces, together with a
  transformation $\varphi \: P \Rightarrow Q $, such that for every
  $d \in \sfD$ the morphism $Q(d) \to P(d)$ is a universal weak
  equivalence. Furthermore, $Q$ is unique up to
  (objectwise) weak equivalence of diagrams.
\end{thm}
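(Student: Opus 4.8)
The plan is to exploit the fact that the only real difficulty is strictification: the classifying-space functor $\Theta$ of Theorem \ref{T:whtpytype} takes values in the weak homotopy category $\Top_{w.e.}$, so applying it objectwise to $P$ yields data that commute only up to weak equivalence, not a genuine diagram $Q \: \sfD \to \Top$. Rather than trying to rigidify $\Theta$ abstractly, I would build $Q$ from a single, globally compatible system of presentations indexed by $\sfD$, and only at the very end pass to spaces by a functor that is strict on the nose. Uniqueness, by contrast, I expect to be formal and to follow directly from the definition of a universal weak equivalence.

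First I would choose, for each object $d$, an atlas $X_d \to P(d)$, giving a topological groupoid $G_d = [X_d \times_{P(d)} X_d \rightrightarrows X_d]$ presenting $P(d)$; by the construction underlying Theorem \ref{T:1}, the fat realization $\|N G_d\|$ of its nerve is a classifying space of $P(d)$. For a morphism $\alpha \: d \to d'$ of $\sfD$, the map $P(\alpha)$ does not lift to an honest morphism $G_d \to G_{d'}$; instead it is encoded by a span $G_d \xleftarrow{\sim} H_\alpha \to G_{d'}$ whose left leg is a Morita equivalence obtained from the fibre product $X_d \times_{P(d')} X_{d'}$. Thus $d \mapsto G_d$ is, a priori, only a pseudofunctor into the bicategory of topological groupoids and generalized morphisms, which is why the naive construction fails.

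The key step, and the one I expect to be the \emph{main obstacle}, is to rectify this pseudofunctor to an honest functor $\widetilde{G}_{\bullet} \: \sfD \to [\text{topological groupoids, strict morphisms}]$ that is objectwise Morita equivalent to $G_\bullet$. This is possible because the bicategory of generalized morphisms is the localization of the strict one at Morita equivalences, and a pseudofunctor out of a $1$-category into such a localization can be strictified by a two-sided bar construction (equivalently, by a cofibrant replacement of $G_\bullet$ in the projective model structure on $\sfD$-diagrams of groupoids). Applying the fat realization functor objectwise then produces a genuine diagram $Q(d) := \|N \widetilde{G}_d\|$ in $\Top$; the universal maps of Theorem \ref{T:1} assemble into the transformation $\varphi$, natural in $d$ by construction, and each $\varphi_d$ is a universal weak equivalence because $\widetilde{G}_d$ is Morita equivalent to a presentation of $P(d)$.

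Finally, uniqueness up to objectwise weak equivalence follows directly from the definition. Given another such diagram $Q'$, for each $d$ form the fibre product $R(d) := Q(d) \times_{P(d)} Q'(d)$; since $Q(d) \to P(d)$ and $Q'(d) \to P(d)$ are universal weak equivalences, base-changing one along the other shows that the projections $R(d) \to Q(d)$ and $R(d) \to Q'(d)$ are weak equivalences of spaces, and $R(d)$ is itself a space because universal weak equivalences are representable by definition. The fibre product is functorial in $d$, so these assemble into a natural zig-zag $Q \leftarrow R \to Q'$ of objectwise weak equivalences, which is precisely the asserted uniqueness in $\Top_{w.e.}$.
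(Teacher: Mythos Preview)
Your approach is genuinely different from the paper's, and the rectification step you flag as the ``main obstacle'' is a real gap that you have not closed. You assert that the pseudofunctor $d \mapsto G_d$ into the Morita bicategory can be strictified ``by a two-sided bar construction (equivalently, by a cofibrant replacement \ldots\ in the projective model structure on $\sfD$-diagrams of groupoids)'', but neither of these is substantiated: you would need to exhibit a model structure on topological groupoids whose weak equivalences are Morita equivalences and check that it is cofibrantly generated (so that the projective structure on $\sfD$-diagrams exists), and then argue that a cofibrant replacement of a pseudofunctor is a \emph{strict} functor. None of this is automatic, and it is considerably heavier than what is needed. There is also a terminological slip: the classifying space in Theorem~\ref{T:1} is the Haefliger--Milnor join construction $B\bbX$ of $\S$\ref{SS:classifying}, not the fat realization $\|N\bbX\|$ of the nerve; the paper explains in $\S$\ref{SS:simplicial} why the simplicial construction can fail to produce a map to the stack. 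Finally, even after rectification you must check that the equivalences $[\widetilde{G}_d]\simeq P(d)$ assemble into a \emph{natural} transformation, which you do not address.

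The paper bypasses rectification entirely. It chooses classifying spaces $\varphi_d\: \Theta(x_d)\to P(d)$ object by object, views this as a functor on the \emph{discrete} category $\sfE$ with $\Ob\sfE=\Ob\sfD$, and then right-Kan-extends along the inclusion $\sfE\hookrightarrow\sfD$ relative to the fibered category of $\tilde{R}$-morphisms over $\TopSt$ (Lemma~\ref{L:diagramadjoint}, Proposition~\ref{P:Kan}). Concretely, $Q(d)$ is the fiber product over $P(d)$ of the pullbacks $P(\alpha)^*\Theta(x_{d'})$ over all arrows $\alpha\: d\to d'$ in $\sfD$; functoriality in $d$ is then immediate from the universal property of limits, and the map $Q(d)\to P(d)$ is in $\tilde{R}$ because $\tilde{R}$ is closed under the relevant fiber products (Lemma~\ref{L:prod}). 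This is elementary and needs no groupoid-level strictification. Your uniqueness argument via the zig-zag $Q\leftarrow Q\times_P Q'\to Q'$ is correct and is essentially how uniqueness falls out of the paper's adjunction as well.
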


This theorem implies that every diagram of topological stacks has a
natural weak homotopy type as a diagram of topological spaces.
Furthermore, the transformation $\varphi$ relates the given diagram of
stacks with its weak homotopy type, thus allowing one to transport
homotopical information back and forth between the diagram and its
homotopy type.

The above theorem has various applications. For example, it implies
an equivariant version of Theorem \ref{T:1} for the (weak) action of
a discrete group. It also allows one to define homotopy types of
pairs (triples, and so on) of topological stacks. It is also
useful in studying Bredon type homotopy theories for topological
stacks.

We also consider the case where arrows in $\sfD$ are labeled by
properties of continuous maps, such as: subspace, open (or closed)
subspace, proper, finite, and so on. We show ($\S$\ref{SS:special})
that, under certain conditions, if morphisms in a diagram $P$ of
topological stacks have the properties assigned by the corresponding
labels, then we can arrange so that the morphisms in the homotopy
type $Q$ of $P$ also satisfy the same properties. For example, if we
take $\sfD$ to be $\{1 \to2\}$ and label the unique arrow of $\sfD$
by `closed subspace', this implies that the weak homotopy type of a
`closed pair' $(\X,\A)$ of topological stack can be chosen to be  a
`closed pair' $(X,A)$ of topological spaces. Furthermore, we have a
weak equivalence of pairs $\varphi \: (X,A) \to (\X,\A)$ relating
the pair $(\X,\A)$ to its weak homotopy type $(X,A)$.

The above results are valid for {\em arbitrary} topological stacks
$\X$. That is, all we require is for $\X$ to have a presentation by
a topological groupoid $\bbX=[X_1\sst{}X_0]$. Although this may
sound general enough for applications, there appears to be  a major
class of stacks which does not fall in this category: the mapping
stacks $\Map(\Y,\X)$ of topological stacks.

Nevertheless, in  \cite{Mapping} we prove that the mapping stacks
are not far from being topological stacks. Let us quote a result
from [ibid.].

\begin{thm}
  Let $\X$ and $\Y$ be topological stacks, and let $\Map(\Y,\X)$ be
  their mapping stack. If $\Y$ admits a presentation $[Y_1\sst{}
  Y_0]$ in which $Y_1$ and $Y_0$ are compact, then $\Map(\Y,\X)$ is
  a topological stack. If $Y_1$ and $Y_0$ are only locally compact,
  then $\Map(\Y,\X)$ is a paratopological stack.
\end{thm}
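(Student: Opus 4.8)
The plan is to reduce the statement to the case where the source is an honest topological space, and then to build an explicit atlas for the mapping stack out of mapping spaces carrying the compact--open topology, the exponentiability of $\Y$ (guaranteed by local compactness) being the engine that drives the whole argument. First I would fix a presentation $\Y=[Y_1\sst{} Y_0]$ and apply descent along it. A morphism $\Y\to\X$ is the same datum as a morphism $a\:Y_0\to\X$ together with a $2$-isomorphism between the two restrictions of $a$ to $Y_1$ satisfying the cocycle condition over $Y_2:=Y_1\x_{Y_0}Y_1$, so that
\begin{equation*}
\Map(\Y,\X)\;\simeq\;\liminv\Big(\,\Map(Y_0,\X)\ \sst{}\ \Map(Y_1,\X)\,\Big),
\end{equation*}
the evident $2$-limit (iso-inserter plus the cocycle condition over $Y_2$). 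Since $Y_0,Y_1$ are compact (resp.\ locally compact) and $Y_2$, being a closed subspace of $Y_1\x Y_1$, is again compact (resp.\ locally compact), and since the $2$-category of topological (resp.\ paratopological) stacks is closed under finite $2$-limits, it suffices to treat the case $\Y=Y$ a compact (resp.\ locally compact) space.

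For $Y$ a space I would present $\X=[X_1\sst{} X_0]$ with $X_1=X_0\x_{\X}X_0$ and atlas $p\:X_0\to\X$, and propose that the groupoid $[\Map(Y,X_1)\sst{}\Map(Y,X_0)]$, with compact--open topologies, presents $\Map(Y,\X)$. Here local compactness enters decisively: it makes $-\x Y$ a left adjoint, with right adjoint $\Map(Y,-)$ computed by the compact--open topology. Consequently $\Map(Y,-)$ preserves fiber products, so once the atlas map is known to be an epimorphism the relation space is identified by the exponential law as
\begin{equation*}
\Map(Y,X_0)\x_{\Map(Y,\X)}\Map(Y,X_0)\;=\;\Map(Y,\,X_0\x_{\X}X_0)\;=\;\Map(Y,X_1),
\end{equation*}
with source and target the two evaluation maps, which inherit the required local-fibration property from $s,t\:X_1\to X_0$. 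Thus the entire presentation is assembled mechanically from that of $\X$ by applying $\Map(Y,-)$, and what remains is a single point.

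The hard part will be the epimorphism property of $\Map(Y,X_0)\to\Map(Y,\X)$, which is also the step where the compact versus locally compact dichotomy is decided. Concretely, given a family $f\:T\x Y\to\X$ one must produce, locally on $T$, a lift through $p$, i.e.\ a section of the pullback $(T\x Y)\x_{\X}X_0\to T\x Y$. For fixed $t\in T$ the restriction $\{t\}\x Y\to\X$ lifts through $p$ only locally on $Y$; when $Y$ is compact a tube-lemma argument patches finitely many such local sections, reconciled by the groupoid $X_1$, into a single section over $U\x Y$ for a neighbourhood $U\ni t$, so the atlas map admits genuine local sections and $\Map(Y,\X)$ is a topological stack. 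When $Y$ is merely locally compact one can only exhaust it by countably many compacts and patch over a numerable cover of $T$, so the atlas map admits sections only in the weaker parametrized sense; this is precisely the condition defining a paratopological stack, and it accounts for the weaker conclusion in the second half of the statement. I expect the genuine technical labor to lie in making this patching continuous in the $T$-direction while keeping the compact--open topology under control, and in verifying that the resulting source and target maps are the appropriate (parametrized) local fibrations.
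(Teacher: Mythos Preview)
This theorem is not proved in the present paper; it is quoted from \cite{Mapping} and stated in the introduction without proof. So there is no proof here to compare your proposal against. That said, your outline has a genuine gap that is worth flagging.

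Your proposed atlas $\Map(Y,X_0)\to\Map(Y,\X)$ is too small: it is \emph{not} an epimorphism, even when $Y$ is compact. The issue is exactly the one you try to finesse with the phrase ``reconciled by the groupoid $X_1$'': a map $Y\to\X$ lifts to $X_0$ only locally on $Y$, and there is no mechanism for gluing finitely many local lifts into a single global lift $Y\to X_0$. The obstruction is a genuine \v{C}ech $1$-cocycle with values in $\bbX$, and it need not be a coboundary. A concrete counterexample: take $\X=BG$ for a discrete group $G$ with the atlas $X_0=\ast$, and let $Y=S^1$. Then $\Map(Y,X_0)$ is a point, while $\Map(Y,BG)$ has one component for each conjugacy class of $G$; the map $\ast\to\Map(S^1,BG)$ hits only the component of the trivial bundle. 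No amount of shrinking the test space $T$ repairs this, since the failure already occurs for $T$ a point.

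What one actually needs is a larger atlas that records the local trivialization data. Roughly, one takes a disjoint union, over a suitable family of finite open covers $\{U_i\}$ of $Y$ (or of $T\times Y$), of spaces of $\bbX$-valued $1$-cocycles relative to those covers, topologized via the compact--open topology. The compactness (resp.\ local compactness) of $Y$ is what guarantees that a cofinal family of such covers suffices and that the resulting map to $\Map(Y,\X)$ is an epimorphism (resp.\ an epimorphism after pulling back to paracompact test spaces). Your reduction to the case of an honest space $Y$ via the $2$-limit description is fine, as is your use of exponentiability to control fiber products; it is only the choice of atlas that needs to be rethought.
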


{\em Paratopological stacks} (Definition \ref{D:paratop}) form an
important 2-category of stacks which contains $\TopSt$ as a full sub
2-category. The advantage of the 2-category of paratopological
stacks over the 2-category $\TopSt$ of topological stacks is that it
is closed under arbitrary 2-limits (but we will not prove this
here). We show in $\S$\ref{S:Homotopical} that our machinery of
homotopy theory of topological stacks extends to the category of
paratopological stacks. In particular, all mapping stacks
$\Map(\Y,\X)$ have a well-defined (functorial) weak homotopy type,
as long as $\Y$ satisfies the locally compactness condition mentioned above.

We believe the category of paratopological stacks is a suitable category 
for doing homotopy theory in. Some other candidates are also discussed
in $\S$\ref{S:Homotopical}.

\medskip 

\noindent{\bf Acknowledgement.}
  I owe a big thank you to  Gustavo Granja for providing invaluable help in various stages of
  writing this paper.


\tableofcontents%

\section{Notation and conventions}{\label{S:Notation}}

Throughout the notes, $\Top$ stands for the category of all
topological spaces. The localization of $\Top$ with respect to the
class of weak equivalences is denoted by $\Top_{w.e.}$ (the {\em
weak homotopy category} of spaces). The localization of $\Top$ with
respect to the class of homotopy equivalences is denoted by
$\Top_{h.e.}$ (the {\em homotopy category} of spaces).

All stacks considered in the paper are over $\Top$.

We will denote groupoids by $\bbX=[s,t\: X_1 \sst{} X_0]$. For
convenience, we drop $s$ and $t$ from the notation. We usually
reserve the letters $s$ and $t$ for the source and target maps of
groupoids, unless it is clear from the context that they stand for
something else.

Our terminology differs slightly from that of \cite{Noohi}. A {\em
topological stack} means a stack $\X$ that is equivalent to the
quotient stack of a topological groupoid $\bbX=[X_1 \sst{} X_0]$; in
[ibid.] these are called {\em pretopological stacks}.

A morphism $f \: \X \to \Y$ of stacks is called an {\em epimorphism}
if it is an epimorphism in the sheaf theoretic sense (i.e., for
every topological space $W$, every object in $\Y(W)$ has a preimage
in $\X(W)$, possibly after passing to an open cover of $W$). In the
case where $\X$ and $\Y$ are topological spaces, this is equivalent
to saying that $f$ admits local sections.

For simplicity, we assume that all 2-categories have invertible
2-morphisms. The category obtained by identifying 2-isomorphic
morphisms in a 2-category $\mfC$ is denoted by $[\mfC]$. We usually
use Fraktur symbols for 2-categories and Sans Serif symbols for
1-categories.

\section{Torsors for groupoids}{\label{S:Torsor}}

We quickly recall the definition of a torsor for a groupoid; see for
instance \cite{Noohi}, $\S$12.

\begin{defn}{\label{D:torsor}}
  Let $\bbX=[R \sst{} X]$ be a
  topological groupoid, and let $W$ be a topological space. By an
  $\bbX$-{\bf torsor} over $W$ we mean a map $p\: T \to W$ of topological
  spaces which admits local sections, together with a cartesian
  morphism of groupoids
     $$[T\x_W T \sst{} T] \to [R \sst{} X].$$
\end{defn}

By a {\em trivialization} of an $\bbX$-torsor $p \: T \to W$ we mean
an open covering $\{U_i\}$ of $W$, together with a collection of
sections $\sigma_i \: U_i \to T$. To give an  $\bbX$-torsor and a
trivialization for it is the same thing as giving a 1-cocycle on $W$
with values in $\bbX$, as defined below.

Given an open cover $\{U_i\}$ of $W$ of a topological space $W$, an
{\em $\bbX$-valued 1-cocycle} on $W$ relative the cover $\{U_i\}$
consists of a collection of continuous maps $a_i \: U_i \to X$, and
a collection of continuous maps $\gamma_{ij} \: U_i\cap U_j \to R$,
such that:
\begin{itemize}
  \item[$\mathbf{C1.}$] for every $i,j$,\quad
  $s\circ\gamma_{ij}=a_i|_{U_i\cap U_j}$ and $t\circ\gamma_{ij}=a_j|_{U_i\cap U_j}$;

  \item[$\mathbf{C2.}$] for every $i,j,k$,\quad
  $\gamma_{ij}\gamma_{jk}=\gamma_{ik}$ as maps from $U_i\cap U_j\cap
  U_k$ to $R$.
\end{itemize}

Equivalently, a 1-cocyle on $\{U_i\}$ is a groupoid morphism $c \: \bbU \to \bbX$,
where $\bbU:=[\coprod U_i\cap Uj \sst{} \coprod U_i]$ is the groupoid
associated to the covering $\{U_i\}$.

A morphism from a  1-cocycle $\big(\{U_i\},a_i,\gamma_{ij}\big)$ to
a 1-cocycle $\big(\{U'_k\},a'_k,\gamma'_{kl}\big)$ is a collection
of maps $\delta_{ik} \: U_i\cap U'_k \to R$ such that:
  \begin{itemize}
   \item[$\mathbf{M1.}$] for every $i,k$, \quad $s\circ\delta_{ik}=a_i|_{U_i\cap U'_k}$
   and $t\circ\delta_{ik}=a'_k|_{U_i\cap U'_k}$;

   \item[$\mathbf{M2.}$] for every $i,k,l$,
     $$\delta_{ik}\gamma'_{kl}=\delta_{il}  \: U_i \cap U'_k\cap U'_l \to R,$$
     $$  \gamma_{ij}\delta_{ik}=\delta_{jk} \: U_i \cap U_j\cap U'_k \to R.$$
  \end{itemize}

Equivalently, a morphism from the 1-cocycle $c\:\bbU \to \bbX$ to
the 1-cocycle
 $c'\:\bbU' \to \bbX$ is
 a morphism of groupoids $\bbU\coprod\bbU' \to \bbX$ whose restrictions
 to $\bbU$ and $\bbU'$ are equal to $c$ and $c'$, respectively.
 Here, $\bbU\coprod\bbU'$ is defined to be the groupoid
 associated to the covering $\{U_i,U_k\}_{i,k}$ (repetition allowed).

\begin{lem}{\label{L:cocycle}}
 Let $\bbX$ be a topological groupoid and $W$ a topological space.
 Then,  1-cocycles over $W$ and morphisms between them form a
 groupoid that is naturally equivalent to the groupoid of
 $\bbX$-torsors over $W$. This groupoid is also naturally equivalent
 to the groupoid $\Hom_{\mathbf{St}}(W,[X/R])$ of stack morphisms
 from $W$ to the quotient stack $[X/R]$.
\end{lem}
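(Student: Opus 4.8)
The plan is to prove Lemma~\ref{L:cocycle} by establishing two natural equivalences of groupoids: first, that the groupoid of $\bbX$-torsors over $W$ is equivalent to the groupoid of $1$-cocycles, and second, that this cocycle groupoid is equivalent to $\Hom_{\mathbf{St}}(W,[X/R])$. I would present everything in the language of groupoid morphisms, since the excerpt has already noted that a $1$-cocycle is the same as a groupoid morphism $c\:\bbU\to\bbX$ and a morphism of cocycles is a groupoid morphism $\bbU\coprod\bbU'\to\bbX$ restricting correctly; this reformulation is what makes the bookkeeping tractable.

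\medskip

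First I would construct the equivalence between torsors and cocycles. Given an $\bbX$-torsor $p\:T\to W$, the defining property is that $p$ admits local sections, so I can choose an open cover $\{U_i\}$ of $W$ together with sections $\sigma_i\:U_i\to T$. Setting $a_i=(\text{anchor})\circ\sigma_i\:U_i\to X$ recovers the object data, and over $U_i\cap U_j$ the pair $(\sigma_i,\sigma_j)$ lands in $T\x_W T$, so applying the cartesian groupoid morphism $[T\x_W T\sst{}T]\to[R\sst{}X]$ yields maps $\gamma_{ij}\:U_i\cap U_j\to R$. The compatibility conditions $\mathbf{C1}$ and $\mathbf{C2}$ then follow directly from the source/target compatibility and functoriality of the groupoid morphism. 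Conversely, from a cocycle I would reconstruct the torsor by the standard gluing/descent construction, forming $T$ as a quotient of $\coprod_i U_i\x_X R$ (fiber product over $a_i$ and $s$) by the equivalence relation encoded by the $\gamma_{ij}$. A morphism of cocycles (the $\delta_{ik}$ satisfying $\mathbf{M1}$, $\mathbf{M2}$) corresponds to an isomorphism of torsors, and I would check these two assignments are mutually inverse up to natural isomorphism, so that the functor is essentially surjective and fully faithful.

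\medskip

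Second I would identify the cocycle groupoid with $\Hom_{\mathbf{St}}(W,[X/R])$. Here the key input is the descent characterization of the quotient stack: a morphism $W\to[X/R]$ is, by definition of the quotient stack, an $\bbX$-torsor over $W$ (equivalently, local data $a_i\:U_i\to X$ with transition data $\gamma_{ij}$ valued in $R$ satisfying the cocycle conditions, glued via the descent datum), and a $2$-morphism of stack morphisms corresponds exactly to a morphism of the associated cocycles. So this second equivalence is essentially the unwinding of the definition of $[X/R]$ as the stackification of the prestack of $\bbX$-torsors, composed with the first equivalence.

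\medskip

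I expect the main obstacle to be the careful handling of the dependence on the chosen cover. The torsor-to-cocycle functor requires a choice of trivializing cover, so to get a genuine equivalence of groupoids (rather than merely an equivalence after fixing a cover) I must show that refining the cover, or passing to a different cover, gives an isomorphic cocycle in a way compatible with morphisms---this is precisely what the direct-limit-over-covers or the $\bbU\coprod\bbU'$ formalism introduced before the lemma is designed to encode. The verification that the constructed functors are fully faithful and essentially surjective, while conceptually routine descent, requires patient checking that the algebraic identities $\mathbf{M1}$--$\mathbf{M2}$ translate exactly into the naturality and isomorphism conditions for torsor morphisms; I would state these checks as straightforward and refer to \cite{Noohi}, $\S 12$, for the parts that are standard.
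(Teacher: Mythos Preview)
Your proposal is correct and matches the paper's approach: the paper likewise constructs the torsor from a cocycle by gluing the pieces $T_i=U_i\times_{a_i,X,s}R$ along the equivalence relation determined by the $\gamma_{ij}$, and refers to \cite{Noohi}, $\S$12, for the equivalence with stack morphisms (noting that the torsor associated to $\varphi\:W\to[X/R]$ is the pullback of $X\to[X/R]$ along $\varphi$). Your additional care about cover-dependence is exactly what the paper leaves to the reader as ``straightforward.''
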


\begin{proof}
  The last statement can be found in (\cite{Noohi}, $\S$12).
  We only point out that the torsor $p \: T \to W$ associated to
  a morphism $\varphi \: W \to [X/R]$ is defined via the following
  2-cartesian diagram
   $$\xymatrix@=16pt@M=8pt{   T \ar[r] \ar[d]_p & X \ar[d] \\
                              W \ar[r]_{\varphi} & [X/R]} $$

  We explain how to associate an $\bbX$-torsor to a 1-cocycle
  $\big(\{U_i\},a_i,\gamma_{ij}\big)$. Set $T_i:=U_i\times_{a_i,X,s}R$.
  Define $T$ to be $\coprod T_i/_{\sim}$, where $\sim$ is the following
  equivalence relation: $(w_i,\al_i)\sim (w_j,\al_j)$, if
  $w_i=w_j=:w$ and $\al_i=\gamma_{ij}(w)\al_j$.

  The cartesian groupoid morphism $[T\times_W T \sst{} T] \to [R\sst{} X]$ is defined as follows.
  The map $T \to X$ is defined by $(w_i,\al_i) \mapsto t(\al_i)$.
  An element $\big((w_i,\al_i),(w_i,\be_i)\big)$ in $T\times_W T$ is mapped to
  $\al_i^{-1}\be_i  \in R$; this is easily seen to be well-defined (i.e.,
   independent of $i$).

   The rest of the proof is straightforward and is left to the
   reader.
\end{proof}

\begin{rem}
 Our definition of a 1-cocycle is different from that of (\cite{Haefliger},
 $\S$2)
 in that in {\em loc.\;cit.} the maps $a_i$ are not part of the data.
 There is, however, a forgetful map that associates to a 1-cocycle in our sense a cocycle
 in the sense of Haefliger.
\end{rem}

\section{Classifying space of a topological groupoid}{\label{S:Classifying}}

In this section, we discuss Haefliger's definition of the
classifying space of a topological groupoid.

\subsection{Construction of the classifying space of a
 topological groupoid}{\label{SS:classifying}}

We recall  from \cite{Haefliger} the definition of the (Haefliger-Milnor)
{\bf classifying space}
 $B\bbX$ and the {\bf universal bundle} $E\bbX$
 of a topological groupoid $\bbX=[R \sst{} X]$. Our main objective
 is to show that $\bbE \to \bbX$ is an $\bbX$-torsor, thereby giving
 rise to a morphism $\varphi \: B\bbX \to \X$, where $\X$ is the 
 quotient stack of $\bbX$. We will
 see in $\S$\ref{S:Nice} that $ B\bbX$ is a classifying space for $\X$ 
 in the sense of Definition \ref{D:classifying}

An element in $E\bbX$ is a sequence
$(t_0\al_0,t_1\al_1,\cdots,t_n\al_n,\cdots)$, where $\al_i \in R$
are such that $s(\al_i)$ are equal to each other,
 and $t_i \in [0,1]$ are such that all but finitely
many of them are zero and $\sum t_i=1$. As the notation suggests, we
set $(t_0\al_0,t_1\al_1,\cdots,t_n\al_n,\cdots)=(t'_0\al'_0,t'_1\al'_1,\cdots,t'_n\al'_n,\cdots)$
if $t_i=t'_i$ for all $i$ and $\al_i=\al'_i$ if $t_i\neq 0$.

Let $t_i \: E\bbX \to [0,1]$ denote the map
$(t_0\al_0,t_1\al_1,\cdots,t_n\al_n,\cdots) \mapsto t_i$, and
let $\al_i \: t_i^{-1}(0,1] \to R$ denote the
map $(t_0\al_0,t_1\al_1,\cdots,t_n\al_n,\cdots) \mapsto \al_i$. The
topology on $E\bbX$ is the weakest topology in which $t_i^{-1}(0,1]$ are all
open and $t_i$ and $\al_i$ are all continuous.

The classifying space $B\bbX$ is defined to be the quotient of
$E\bbX$ under the following equivalence relation. We say two
elements $(t_0\al_0,t_1\al_1,\cdots,t_n\al_n,\cdots)$ and
$(t'_0\al'_0,t'_1\al'_1,\cdots,t'_n\al'_n,\cdots)$ of $E\bbX$ are
equivalent, if $t_i=t'_i$  for all $i$, and if there is an element
$\gamma \in R$ such that $\al_i=\gamma\al'_i$. (So, in particular,
$t(\al_i)=t(\al'_i)$  for all $i$.) Let $p \: E\bbX \to B\bbX$ be
the projection map.

The projections $t_i \: E\bbX \to [0,1]$ are compatible with this
equivalence relation, so they induce continuous maps $u_i \: B\bbX
\to [0,1]$ such that $u_i\circ p=t_i$. Let $U_i=u_i^{-1}(0,1]$.

\begin{lem}{\label{L:torsor}}
  The projection map $p \: E\bbX \to B\bbX$ can be naturally made into a
  $\bbX$-torsor.
\end{lem}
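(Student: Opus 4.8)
The plan is to show that $p \: E\bbX \to B\bbX$ carries a natural structure of an $\bbX$-torsor by exhibiting (i) a map $E\bbX \to X$ to the object space of the groupoid, and (ii) a cartesian groupoid morphism $[E\bbX \times_{B\bbX} E\bbX \sst{} E\bbX] \to [R \sst{} X]$, and then verifying that $p$ admits local sections. For the map to the object space, I would use the target map: send $(t_0\al_0, t_1\al_1, \ldots) \mapsto t_i(\al_i)$... more carefully, since the $\al_i$ have a common source, I would define $\pi \: E\bbX \to X$ by $\pi(t_0\al_0, t_1\al_1, \ldots) = s(\al_i)$, the common source (this is well-defined and continuous on each $t_i^{-1}(0,1]$, and these patch because the sources all agree). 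This is the natural candidate since the equivalence relation defining $B\bbX$ acts by left translation $\al_i \mapsto \gamma\al_i$, which preserves the source.

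First I would construct the groupoid morphism. Given two points $e = (t_0\al_0, \ldots)$ and $e' = (t_0'\al_0', \ldots)$ of $E\bbX$ lying over the same point of $B\bbX$ (so $t_i = t_i'$ for all $i$ and there exists $\gamma \in R$ with $\al_i = \gamma\al_i'$ whenever $t_i \neq 0$), I note that the connecting element $\gamma$ is \emph{uniquely determined}: since some $t_i \neq 0$, we have $\gamma = \al_i \al_i'^{-1}$, which is independent of the chosen index $i$. Composability holds because $s(\al_i) = \pi(e)$ and $s(\al_i') = \pi(e')$, and a short check shows $t(\gamma) = \pi(e)$, $s(\gamma) = \pi(e')$, so $\gamma \in R$ gives an arrow from $\pi(e')$ to $\pi(e)$. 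This defines the map $E\bbX \times_{B\bbX} E\bbX \to R$, and I would check continuity using the topology generated by the $t_i$ and $\al_i$, together with functoriality (identities and composition) directly from the formula $\gamma(e,e'') = \gamma(e,e')\gamma(e',e'')$. That this square is cartesian amounts to saying $E\bbX \times_{B\bbX} E\bbX \cong E\bbX \times_{\pi, X, t} R$, i.e. an arrow in the groupoid pullback recovers the second point, which follows from the formula $e' = \gamma^{-1} \cdot e$ and the freeness of the left $R$-action on the $\al_i$'s up to the equivalence relation.

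The one genuinely substantive step is showing that $p$ admits local sections, since this is exactly the torsor condition beyond the groupoid morphism and also what makes $B\bbX$ behave like a good quotient. Here I would use the open cover $\{U_i\}$ with $U_i = u_i^{-1}(0,1]$ and the coordinate functions: on $U_i$, a section $\sigma_i \: U_i \to E\bbX$ can be built by normalizing representatives so that $\al_i$ is an identity arrow (i.e. choosing within each fiber the unique representative with $\al_i = 1_{s(\al_i)}$), which is well-defined precisely because on $U_i$ one has $t_i > 0$, so the equivalence class contains a canonical representative gauge-fixed at index $i$. Continuity of $\sigma_i$ follows from the universal property of the weak topology on $E\bbX$ by checking $t_j \circ \sigma_i$ and $\al_j \circ \sigma_i$ are continuous, where $\al_j(\sigma_i(b)) = \al_i(\text{rep})^{-1}\al_j(\text{rep})$ is expressed through the coordinates on $B\bbX$.

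I expect the main obstacle to be the continuity verifications, rather than any conceptual difficulty: both the continuity of the connecting-element map $(e,e') \mapsto \gamma$ and the continuity of the local sections $\sigma_i$ require unwinding the definition of the weak topology on $E\bbX$ and on $B\bbX$ and checking compatibility of the coordinate maps, including the behavior of $\al_j \circ \sigma_i$ near the locus where $t_j$ vanishes. Once local sections and the cartesian groupoid morphism are in place, the torsor axioms of Definition \ref{D:torsor} are satisfied, and I would remark that by Lemma \ref{L:cocycle} this torsor corresponds to the desired classifying morphism $\varphi \: B\bbX \to \X$. The naturality claim ("can be naturally made into") I would address by noting all constructions are canonical in $\bbX$, compatible with groupoid morphisms.
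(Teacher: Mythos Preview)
Your proposal is correct and follows essentially the same route as the paper: the map $E\bbX \to X$ is the common-source map, the morphism $E\bbX\times_{B\bbX}E\bbX \to R$ sends a pair to the unique connecting element $\gamma$, and the local sections over $U_i$ are obtained by normalizing the $i$-th coordinate to an identity arrow, i.e., replacing $(t_0\al_0,t_1\al_1,\ldots)$ by $(t_0\al_i^{-1}\al_0,t_1\al_i^{-1}\al_1,\ldots)$. Your write-up is somewhat more explicit than the paper's about continuity and about why the square is cartesian, but the underlying construction is identical.
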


\begin{proof}
   First we show that $p$ admits local sections. Consider
   the open cover $\{U_i\}$ of $B\bbX$ defined above.
   We define a section $U_i \to E\bbX$ for $p$ by sending
   the equivalence class of
   $(t_0\al_0,t_1\al_1,\cdots,t_n\al_n,\cdots)$
   to $(t_0\al_i^{-1}\al_0,t_1\al_i^{-1}\al_1,\cdots,t_i\al_i^{-1}\al_n,\cdots)$.

   Let us now define a cartesian groupoid morphism
      $$F \: [E\bbX\times_{B\bbX}E\bbX \sst{}E\bbX] \to [R\sst{} X].$$
   The effect on the object space is given by the map $f \: E\bbX \to X$
   which sends $(t_0\al_0,t_1\al_1,\cdots,t_n\al_n,\cdots)$
   to $s(\al_i)$; this is independent of $i$ (by definition).
   An element in $E\bbX\times_{B\bbX}E\bbX$ is represented
   by a pair
   $$\big((t_0\gamma\al_0,t_1\gamma\al_1,\cdots,t_n\gamma\al_n,\cdots),
    (t_0\al_0,t_1\al_1,\cdots,t_n\al_n,\cdots)\big),$$
   for a unique $\gamma \in R$. We send this element to $\gamma \in R$.
   This is easily verified to be a cartesian morphism of groupoids.
\end{proof}

It follows from Lemma \ref{L:cocycle} that we have a morphism
natural $\varphi \: B\bbX \to \X$ and the the $\bbX$-torsor $p \:
E\bbX \to B\bbX$ fits in a 2-cartesian diagram
   $$\xymatrix@=16pt@M=8pt{   E\bbX \ar[r]^{f} \ar[d]_p & X \ar[d] \\
                              B\bbX \ar[r]_{\varphi} & \X} $$

\subsection{Comparison with the simplicial construction}{\label{SS:simplicial}}

There is another way of associating a classifying space to a
groupoid $\bbX=[R\sst{}X]$, namely, by taking  the geometric
realization of (the simplicial space associated to) it. In this
subsection, we compare this construction with Haefliger's and
explain why we prefer Haefliger's construction.

First, let us recall the construction of the geometric realization
of $\bbX$. Consider the simplicial space $N\bbX$ with
 $$(N\bbX)_0=X, \ \ \text{and} \ \ (N\bbX)_n=
   \underbrace{R\times_{X}\times\cdots\times_{X}\times R}_{n-\text{fold}},
                                            \ n\geq 1.$$
The geometric realization  of $\bbX$ is, by definition, the
geometric realization  of $N\bbX$. We will denote it by $|\bbX|$.

Alternatively, $|\bbX|$ can be obtained as a quotient space of
$E\bbX$ by declaring that ``it is allowed to take common factors in
$E\bbX$.'' This means that, if an element $\alpha \in R$ appears
several times in the sequence
$\mathfrak{s}=(t_0\al_0,t_1\al_1,\cdots,t_n\al_n,\cdots) \in E\bbX$,
say at indices $i_1,\cdots,i_k$, then we regard $\mathfrak{s}$ as
equivalent to any sequence  $\mathfrak{s}'\in E\bbX$ which is
obtained from $\mathfrak{s}$ by altering the coefficients $t_{i_1},
\cdots,t_{i_k}$ in a way that $t_{i_1}+\cdots+t_{i_k}$ remains
fixed. (Roughly speaking, we are collapsing the subsequence
$(t_{i_1}\alpha, \cdots,t_{i_k}\alpha)$ of $\mathfrak{s}$ to a
single element $(t_{i_1}+\cdots+t_{i_k})\alpha$.)

It can be shown that there is  a universal bundle $|\bbE|$ over
$|\bbX|$ that is {\em almost} an $\bbX$-torsor. We explain how it is
defined.

Consider the topological groupoid $\bbE:=[R\times_{s,X,s}R\sst{}
R]$. There is a groupoid morphism $p \: \bbE \to \bbX$ induced by
the target map $t \: R \to X$. This induces a map $|p| \: |\bbE| \to
|\bbX|$ on the geometric realizations. This will be the structure
map of our (almost) torsor. Let us explain how the cartesian
morphism
  $$[|\bbE|\times_{|\bbX|}|\bbE|\sst{} |\bbE|] \to [R\sst{}X]$$
is constructed. Viewing $X$ and $R$ as trivial groupoids
$[X\sst{}X]$ and $[R\sst{}R]$, we have the following strictly
cartesian diagram in the category of topological groupoids:
   $$\xymatrix{ \bbE\times_{\bbX}\bbE \ar[r]^{\lambda} \ar[d]_{\pr_1 \
   \text{or} \ \pr_2}
                     & R  \ar[d]^{s \ \text{or} \ t}\\
        \bbE \ar[r]_{\sigma} & X    }$$
In this diagram, the map $\sigma \: [R\times_{s,X,s}R\sst{} R] \to
[X\sst{}X]$ is the one induced by the source map $s \: R \to X$. The
fiber product $\bbE\times_{\bbX}\bbE$ is the strict fiber product of
groupoids, and the maps $\bbE \to \bbX$ appearing in this fiber
product are both $p$. The morphism $\lambda$ in the diagram is
defined as follows. An object in the groupoid
$\bbE\times_{\bbX}\bbE$ is a pair of arrows $(\gamma_1,\gamma_2)$
with the same target. Under $\lambda$, this will get sent to
$\gamma_1\gamma_2^{-1} \in R$. The effect of $\lambda$ on arrows is
now uniquely determined.

The above diagram is indeed a (cartesian) morphism of groupoid
objects in the category of topological groupoids. After passing to
geometric realizations at all four corners, and noting that taking
geometric realizations commutes with fiber products, the above
diagram gives rise to the desired cartesian morphism of topological
groupoids $\Psi \: [|\bbE|\times_{|\bbX|}|\bbE|\sst{} |\bbE|] \to
[R\sst{}X]$.

This almost proves that $|p| \: |\bbE| \to |\bbX|$ is an
$\bbX$-torsor. The only thing that is left to  check is the
existence of local sections. This, however, may not be true in
general, unless the source and target maps of the original groupoid
$\bbX$, and also its identity section, are nicely behaved (locally).
This prevents $p \: |\bbE| \to |\bbX|$ from being an $\bbX$-torsor.
As a consequence, we do {\em not}
get a morphism $|\bbX| \to [X/R]$. 
This explains why we opted for $B\bbX$ rather than $|\bbX|$ as a
model for the classifying space of $\bbX$.

\begin{rem}{\label{R:simp}}
The above discussion can be summarized by saying that there are
quotient maps $q' \: E\bbX \to |\bbE|$ and   $q\: B\bbX \to |\bbX|$
inducing a commutative diagram
     $$\xymatrix@=16pt@M=8pt{
      [E\bbX\times_{B\bbX}E\bbX \sst{} E\bbX] \ar[r]^{Q} \ar[rd]_{\Phi} &
              [|\bbE|\times_{|\bbX|}|\bbE|\sst{} |\bbE|]  \ar[d]^{\Psi} \\
                                                       & [R\sst{} X]} $$
of cartesian groupoid morphisms. The  morphism $\Phi$ {\em does}
make $E\bbX \to B\bbX$ into an $\bbX$-torsor. In contrast, the
morphism $\Psi$ {\em does not} always make $|\bbE| \to |\bbX|$ into
an $\bbX$-torsor. Therefore, the dotted arrow in the following
diagram of the corresponding quotient stacks
     $$\xymatrix@=16pt@M=8pt{  B\bbX \ar[r]^{q} \ar[rd]_{\varphi}^{\cong} &
                                      |\bbX|  \ar@{..>}[d] \\
                                             & [X/R]} $$
may not always be filled.
\end{rem}

\subsection{The case of a group action}{\label{SS:group}}

Let $G$ be a topological group acting continuously on a topological
space $X$ (on the right). Recall that  $\X=[X/G]$ is the quotient
stack of the topological groupoid $\bbX=[X\times G \sst{} X]$. In
this case, $B\bbX$ is equal to the Borel construction, that is
$B\bbX=X\times_G  EG$. The torsor $E\bbX$ constructed in
$\S$\ref{S:Classifying} is equal to $X\times EG$, and $p \: E\bbX
\to B\bbX$ is Milnor's universal $G$-bundle. The cartesian square
appearing after the proof of Lemma \ref{L:torsor} now takes the
following form
 $$\xymatrix@=16pt@M=8pt{  X\times EG  \ar[r]^f \ar[d]_p & X \ar[d] \\
                           X\times_G  EG  \ar[r]_{\varphi} & [X/G]} $$

\section{Shrinkable  morphisms}{\label{S:Shrinkable}}

We begin with an important definition.

\begin{defn}{\label{D:retraction}}
  We say that a continuous map  $f \: X \to Y$ of topological spaces is
  {\bf shrinkable} (\oldcite{Dold}, $\S$1.5), if it admits a section $s \: Y \to X$
  such that there is a fiberwise strong deformation retraction of $X$ onto
  $s(Y)$. We say that $f$ is   {\bf locally shrinkable},
  if there is an open cover $\{U_i\}$ of $Y$ such
  that $f|_{U_i} \: f^{-1}(U_i) \to U_i$ is shrinkable for all $i$.
  We say that $f$ is {\bf parashrinkable}, if for every map $T \to Y$
  from a paracompact topological space $T$, the base extension
  $f_T \: T\times_{Y}X \to T$ is shrinkable. If this condition is
  only satisfied for $T$ a CW complex, we say that $f$ is {\bf
  pseudoshrinkable}.
  We say that  $f$  is a {\bf universal weak equivalence}, if
  for every map $T \to Y$ from a topological space $Y$, the base
  extension $f_T \: T\times_Y X \to T$ of $f$ is a weak equivalence.
\end{defn}

\begin{defn}{\label{D:stackshrinkable}}
  We say that a representable morphism $f \: \X \to \Y$ of topological
  stacks is locally shrinkable (respectively, parashrinkable,
  pseudoshrinkable, a universal weak equivalence) if for every map $T \to
  \Y$ from a topological space $Y$, the base
  extension $f_T \: T\times_Y \X \to T$ of $f$ is so.
\end{defn}

\begin{rem}
  The above notions do not distinguish 2-isomorphic morphisms of stacks, so they pass to $[\TopSt]$.
\end{rem}

The following lemma clarifies the relation between the above
notions.

\begin{lem}{\label{L:shrinkable}}
   The properties introduced in Definition \ref{D:stackshrinkable}
   are related in the following way:
     $$\xymatrix@=0pt@M=3pt{   &&\text{triv. Hurewicz fib.} &\Rightarrow & \text{triv. Serre fib.}&& \\
                        &\rotatebox{-160}{$\Rightarrow$} && &  & \rotatebox{-30}{$\Rightarrow$} &  \\
 \text{shrinkable} & \Rightarrow & \text{locally shrinkable} &
       \Rightarrow & \text{parashrinkable} & \Rightarrow &
                                     \text{pseudoshrinkable} \\
      &&\rotatebox{-90}{$\Rightarrow$}&&&&
                               \rotatebox{-90}{$\Rightarrow$} \\
                      &&\text{epimorphism} &&&&
                                   \text{universal weak eq.} }$$
\end{lem}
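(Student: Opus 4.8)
The plan is to establish each implication in the diagram separately, since the lemma is really a bundle of small independent claims. I would organize the proof as a sequence of short arguments following the arrows, starting from the definitional implications and ending with the ones requiring genuine homotopy theory.

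First I would dispose of the horizontal chain on the bottom row. The implications \emph{shrinkable} $\Rightarrow$ \emph{locally shrinkable} $\Rightarrow$ \emph{parashrinkable} $\Rightarrow$ \emph{pseudoshrinkable} are essentially built into Definition \ref{D:retraction}, but the middle one is the substantive step. To show locally shrinkable implies parashrinkable, I would take a map $T \to \Y$ from a paracompact space $T$ and pull back the open cover $\{U_i\}$ of $\Y$ (on which $f$ is shrinkable) to an open cover of $T$. Since $T$ is paracompact, I can choose a locally finite partition of unity subordinate to this cover, and the task becomes gluing the local fiberwise strong deformation retractions into a global one over $T$. This patching using a partition of unity is the classical Dold-style argument; it is exactly where paracompactness of $T$ gets used, and I expect it to be \textbf{the main obstacle}, since one must carefully interleave the local deformations (for instance via the standard trick of performing them in sequence weighted by the partition functions) and verify the result is a well-defined fiberwise strong deformation retraction. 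The final arrow parashrinkable $\Rightarrow$ pseudoshrinkable is immediate, as every CW complex is paracompact, so the pseudoshrinkable condition is a special case of the parashrinkable one.

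Next I would handle the diagonal and vertical implications on the right, which connect shrinkability to fibration-theoretic and weak-equivalence notions. A shrinkable map is by definition a map admitting a section together with a fiberwise strong deformation retraction onto the image of that section; this is precisely the statement that it is a trivial Hurewicz fibration, and I would cite \cite{Dold}, $\S$1.5 for the equivalence, or verify directly that the deformation retraction supplies the required lifting-homotopy data. The implications \emph{triv.\ Hurewicz fib.} $\Rightarrow$ \emph{triv.\ Serre fib.} and the passage down to \emph{universal weak eq.} follow because a trivial Serre fibration induces isomorphisms on all homotopy groups of every fiber, hence is a weak equivalence, and this property is manifestly stable under arbitrary base change (the pullback of a trivial Serre fibration is again one); thus parashrinkable and pseudoshrinkable maps, being fiberwise shrinkable over CW test spaces, pull back to weak equivalences, giving pseudoshrinkable $\Rightarrow$ universal weak eq. I would note that the diagonal arrows labeled in the diagram amount to reading off that local shrinkability yields a trivial Hurewicz fibration after restriction, and that parashrinkability yields a trivial Serre fibration over CW complexes.

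Finally I would address the remaining vertical implication, shrinkable (or locally shrinkable) $\Rightarrow$ \emph{epimorphism}. Here I would simply observe that a shrinkable map admits a global section and hence admits local sections, and a locally shrinkable map admits local sections by hypothesis on each $U_i$; by the characterization of epimorphisms of spaces recalled in $\S$\ref{S:Notation} as maps admitting local sections, this gives the epimorphism property directly. For the stack-level statements in Definition \ref{D:stackshrinkable}, every implication reduces to the corresponding space-level implication applied to each base extension $f_T \: T \times_\Y \X \to T$, so no new argument is needed beyond the space-level results established above. The only care required throughout is bookkeeping: checking that each implication, once proved for spaces, transports verbatim to representable morphisms of stacks by testing against maps from spaces $T$, which is immediate from the definitions.
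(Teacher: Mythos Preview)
Your approach is essentially the paper's: both identify the middle implication (locally shrinkable $\Rightarrow$ parashrinkable) as the one with real content and both invoke Dold's partition-of-unity machinery for it. The paper simply cites Dold (\cite{Dold}, $\S$2.1) for the statement that a map shrinkable over a numerable cover is globally shrinkable, and then observes that every open cover of a paracompact space has a numerable refinement; you instead propose to reprove Dold's gluing argument by hand. That is fine, but be aware that the careful interleaving you anticipate is exactly what Dold does, so you gain nothing by not citing him.

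There is one genuine gap. You write that a shrinkable map ``is precisely the statement that it is a trivial Hurewicz fibration'' and cite \cite{Dold}, $\S$1.5. That section only \emph{defines} shrinkable; it does not prove any equivalence with trivial Hurewicz fibrations. The passage between ``shrinkable'' and ``trivial Hurewicz fibration'' is not definitional: one direction requires showing that a trivial Hurewicz fibration admits a section together with a fiberwise strong deformation retraction onto it (this uses Dold's theorem that a homotopy equivalence over the base between Hurewicz fibrations is a fiber homotopy equivalence), and the other direction requires showing that the fiberwise deformation retraction actually yields the covering homotopy property. The paper treats this diagonal arrow as the second nontrivial implication and cites \cite{Dold}, Corollary~6.2 for it. Your sentence about the diagonal arrows (``local shrinkability yields a trivial Hurewicz fibration after restriction, and parashrinkability yields a trivial Serre fibration over CW complexes'') also reverses the intended directions of those arrows; the diagram has the fibration notions implying the corresponding shrinkability notions, not the other way around.
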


\begin{proof}
  All the implications are obvious except for the one in the middle
  and the one on the top-left. The one on the top-left follows from
  \cite{Dold}, Corollary 6.2. To prove the middle implication, we
  have to show that every  locally shrinkable $f \: X \to Y$ with
  $Y$ paracompact is shrinkable.  Dold (\cite{Dold}, $\S$2.1)
  proves that if $f \: X \to Y$ is a continuous map which becomes
  shrinkable after passing to a numerable (\cite{Dold}, $\S$2.1)
  cover $\{U_i\}_{i\in I}$ of $Y$, then $f$ is shrinkable. Since in
  our case $Y$ is paracompact,  every open cover of $Y$ admits a
  numerable refinement.  So $f$ is shrinkable by Dold's result.
\end{proof}

\begin{lem}{\label{L:lift}}
  Let $f \: \X \to \Y$ be a parashrinkable
  (respectively, pseudoshrinkable) morphism of
  topological stacks. Let $B$ be a paracompact topological space
  (respectively, a CW complex).
   Then, for every morphism $g \: B \to \Y$, the space of lifts
   $g$ to $\X$ is non-empty and contractible. In particular,
  every morphism $g \: B \to \Y$ has a lift $\tilde{g} \: B \to \X$
  and such a lift is unique up to homotopy.
\end{lem}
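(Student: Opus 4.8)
The plan is to reduce the statement about lifting along the parashrinkable (or pseudoshrinkable) morphism of stacks $f\:\X\to\Y$ to a concrete statement about sections of an actual map of topological spaces, to which the definition of parashrinkability applies directly. The key observation is that the morphism $g\:B\to\Y$ together with $f$ determines, by base change, a representable morphism $f_B\:B\times_\Y\X\to B$. Since $f$ is parashrinkable (respectively pseudoshrinkable) and $B$ is paracompact (respectively a CW complex), Definition \ref{D:stackshrinkable} tells us that $f_B$ is shrinkable as a map of \emph{spaces}; in particular $B\times_\Y\X$ is a genuine topological space, not merely a stack, and we have a map $f_B\: B\times_\Y\X \to B$ admitting a section $s$ together with a fiberwise strong deformation retraction of $B\times_\Y\X$ onto $s(B)$.

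Next I would identify lifts $\tilde g\: B\to\X$ of $g$ with sections of $f_B$. This is the formal content of the 2-cartesian square defining the fiber product: giving a morphism $\tilde g\:B\to\X$ together with a $2$-isomorphism $f\circ\tilde g\cong g$ is the same (up to the groupoid of such data) as giving a section $s\: B\to B\times_\Y\X$ of $f_B$. So the ``space of lifts'' of $g$ — suitably interpreted as a mapping space, or at least as the relevant homotopical object — is weakly equivalent to the space $\Gamma(f_B)$ of sections of $f_B\: B\times_\Y\X\to B$. Thus it suffices to show that the space of sections of a shrinkable map is non-empty and contractible.

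The heart of the argument is then the space-level claim: if $p\: E\to B$ is shrinkable, then its space of sections is non-empty and contractible. Non-emptiness is immediate since a shrinkable map has a section $s$ by definition. For contractibility, I would use the fiberwise strong deformation retraction $H\: E\times[0,1]\to E$ of $E$ onto $s(B)$ supplied by shrinkability. Given any section $\sigma$ of $p$, composing $\sigma$ with $H$ produces a path of sections from $\sigma$ to $s$, because the deformation is fiberwise (it preserves the fibers of $p$, hence carries sections to sections) and it ends at the fixed section $s$. Performing this construction on the whole section space — i.e.\ applying $H$ to the tautological/universal section over $\Gamma(p)\times B$ — yields a contraction of $\Gamma(p)$ onto the point $s$. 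This shows $\Gamma(f_B)$ is contractible, and in particular non-empty, so every $g$ has a lift and any two lifts are connected by a path (homotopy), giving uniqueness up to homotopy.

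The main obstacle I expect is not the space-level contractibility, which is essentially formal once the fiberwise deformation retraction is in hand, but rather the careful matching of the stacky ``space of lifts'' with the honest section space of $f_B$, and the verification that this identification is a weak equivalence (or at least respects non-emptiness and the relevant $\pi_0$/$\pi_n$ information). One must be mindful that lifts of $g$ form a groupoid-enriched or mapping-space object, that $2$-isomorphisms enter, and that the equivalence of groupoids from Lemma \ref{L:cocycle} (identifying stack morphisms into a quotient stack with torsor/section data) is what ultimately licenses replacing the stack-theoretic lifting problem by the topological one. Getting this translation precise — including the claim that $B\times_\Y\X$ is representable by a space, which is exactly where representability of $f$ and the shrinkability output are used — is the step requiring the most care.
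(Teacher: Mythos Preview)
Your approach is correct and matches the paper's: reduce to sections of the shrinkable map $f_B\: B\times_{\Y}\X\to B$ via the universal property of the 2-fiber product, then use the fiberwise deformation retraction to contract the section space. One small overcorrection: the identification of lifts with sections is literally the universal property of the 2-cartesian square (a lift $(\tilde g,\varepsilon)$ is exactly a map $B\to B\times_{\Y}\X$ over $B$), so you do not need Lemma~\ref{L:cocycle} or any torsor translation here, and the ``main obstacle'' you flag is in fact a tautology rather than a delicate point.
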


\begin{proof}
  Recall that a lift  of $g$ to $\X$ means a map $\tilde{g} \: B \to
  \X$ together with a 2-isomorphism $\varepsilon \:
  f\circ\tilde{g}\Rightarrow g$.

  The space of lifts of $g$ is homeomorphic to the space of sections
  of the map shrinkable map $f_B \: B\times_{\X}\Y \to B$, hence is
  contractible.
\end{proof}

The converse of Lemma \ref{L:lift} is also true and can be used as
an alternative way of defining parashrinkable (respectively,
pseudoshrinkable) morphisms.

\begin{lem}{\label{L:epibasechange0}}
   Let $f \: \X \to \Y$ and $g \: \Y' \to \Y$ be  morphisms of
  topological stacks. Let $f' \:\X' \to \Y'$ be the base extension
  of $f$ along $g$. If $f$ is  locally shrinkable (respectively,
  parashrinkable, pseudoshrinkable, a universal weak equivalence),
  then so is $f'$. If $g$ is an epimorphism, and $f'$ is locally
  shrinkable, then $f$ is also locally shrinkable.
\end{lem}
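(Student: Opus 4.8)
The plan is to reduce both assertions to statements about base extensions of maps of topological \emph{spaces}, exploiting the transitivity (pasting) of fibre products; the first assertion is then formal, and the real content is the local-to-global argument in the second.

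For the first assertion I would start directly from Definition \ref{D:stackshrinkable}: to show that $f'$ has the stated property it suffices to check that for every map $T \to \Y'$ from a topological space $T$, the base extension $f'_T \: T\times_{\Y'}\X' \to T$ has the corresponding property as a map of spaces. The key point is that, since $\X' = \Y'\times_\Y\X$, pasting of cartesian squares gives a canonical homeomorphism over $T$,
$$T\times_{\Y'}\X' = T\times_{\Y'}(\Y'\times_\Y\X) \risom T\times_\Y\X,$$
where on the right $T$ maps to $\Y$ via the composite $T \to \Y' \llra{g}\Y$. Thus $f'_T$ is identified with the base extension $f_T$ of $f$ along this composite. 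Since $f$ has the property in question, $f_T$ has it as a map of spaces by Definition \ref{D:stackshrinkable}; hence so does $f'_T$. This handles all four cases at once and is the routine direction.

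For the second assertion, let $T \to \Y$ be any map from a topological space; I must show that $f_T \: T\times_\Y\X \to T$ is locally shrinkable. The first observation is that local shrinkability is a local property on the base: if $\{V_j\}$ is an open cover of $T$ with $f_T|_{V_j}$ locally shrinkable for each $j$, then choosing for each $V_j$ an open cover witnessing this and taking the union produces an open cover of $T$ over whose members $f_T$ is shrinkable, so $f_T$ itself is locally shrinkable. It therefore suffices to produce such a cover $\{V_j\}$. Here I invoke that $g$ is an epimorphism: viewing $T \to \Y$ as an object of $\Y(T)$, there is an open cover $\{V_j\}$ of $T$ together with maps $V_j \to \Y'$ whose composites with $g$ are $2$-isomorphic to $V_j \hookrightarrow T \to \Y$. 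Applying the same pasting of fibre products as above, now over each $V_j$, gives a homeomorphism $V_j\times_{\Y'}\X' \cong V_j\times_\Y\X = f_T^{-1}(V_j)$ over $V_j$, identifying $f_T|_{V_j}$ with the base extension $f'_{V_j}$ of $f'$ along $V_j \to \Y'$. Since $f'$ is locally shrinkable, $f'_{V_j}$ is locally shrinkable, hence so is $f_T|_{V_j}$, and the local-to-global step finishes the proof.

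The main obstacle is this second part, and specifically the use of the epimorphism hypothesis: an epimorphism of stacks only lets one lift $T \to \Y$ through $g$ \emph{after} passing to an open cover of $T$, so one genuinely needs that shrinkability can be assembled from local data on the base. This local-on-the-base principle is immediate for local shrinkability (just refine covers), but it is exactly what fails for \emph{parashrinkable}, \emph{pseudoshrinkable}, and \emph{universal weak equivalence} — which explains why the converse in the lemma is claimed only for local shrinkability. Once the local-to-global principle is in hand, everything else is bookkeeping with the pasting of cartesian diagrams.
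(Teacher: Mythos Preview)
Your proof is correct; the paper's own proof is the single word ``Obvious,'' and you have spelled out exactly the routine pasting-of-cartesian-squares argument (plus the local-on-the-base assembly for the converse) that the author had in mind. Your remark explaining why the converse is stated only for locally shrinkable maps is a nice addition that the paper does not make explicit.
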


\begin{proof}
Obvious.
\end{proof}

\begin{lem}{\label{L:retractionlift}}
Consider the
 2-commutative square
  $$\xymatrix@=8pt@M=6pt{
         A   \ar[rr]^f\ar[dd]_{i}   & \ar@{=>}[dl]^{\varphi}&
              \X  \ar[dd]^{p}  \\ & & \\
         B    \ar[rr]_g
        & &  \Y    }$$
in which  $p \: \X \to \Y$ is a parashrinkable (respectively, a
pseaudoshrinkable) morphism of topological stacks and $i \:A
\hookrightarrow B$ is a closed Hurewicz cofibration of paracompact
topological spaces (respectively, CW complexes). Then, one can find
$h$ and $\alpha$ such that in the diagram
 $$\xymatrix@=8pt@M=6pt{
         A   \ar[rr]^f\ar[dd]_{i}   & &
              \X  \ar[dd]^{p}  \\ \ar@{:>}[ur]_{\al} & &  \\
         B \ar@{..>}[uurr] |-{h}   \ar[rr]_g
        & &  \Y    }$$
 the upper triangle is 2-commutative and the lower triangle commutes
 up to a homotopy that leaves $A$ fixed (i.e., there is a homotopy
 from $g$ to $p\circ h$ which after precomposing with $i$ becomes
 2-isomorphic to the constant homotopy).
\end{lem}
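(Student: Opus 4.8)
The plan is to reduce the statement to a relative lifting problem for a genuine shrinkable map of spaces and then feed in the homotopy extension property of the cofibration $i$. Since $p$ is representable, the fibre product $Z:=B\times_{\Y}\X$ is a topological space, equipped with projections $p_Z\colon Z\to B$ and $q_Z\colon Z\to\X$ together with a canonical $2$-isomorphism $\psi\colon g\circ p_Z\Rightarrow p\circ q_Z$. Because $p$ is parashrinkable (respectively pseudoshrinkable) and $B$ is paracompact (respectively a CW complex), the map $p_Z$, being the base extension of $p$ along $g$, is shrinkable; fix a section $\sigma\colon B\to Z$ and a fibrewise strong deformation retraction $D\colon Z\times I\to Z$ of $Z$ onto $\sigma(B)$, so that $D_0=\id_Z$, $D_1=\sigma\circ p_Z$, $p_Z\circ D_t=p_Z$ for all $t$, and $D_t$ fixes $\sigma(B)$ pointwise. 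The given data $(f,\varphi)$ correspond, by the universal property of $Z$, to a map $s\colon A\to Z$ with $p_Z\circ s\cong i$ and a $2$-isomorphism $q_Z\circ s\cong f$; since $B$ is an honest space the former $2$-isomorphism is an equality, so $s$ is a genuine section of $p_Z$ over $A$.

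Next I would deform $s$ into the restriction of the global section. Set $K\colon A\times I\to Z$, $K(a,t):=D(s(a),t)$. Then $K_0=s$, while $K_1(a)=\sigma(p_Z(s(a)))=\sigma(i(a))$ is exactly $\sigma|_A$, and $p_Z\circ K_t=i$ for every $t$, so $K$ is a path of sections over $A$ joining $s$ to $\sigma|_A$. The crucial point is that I will not try to extend $s$ to a section over all of $B$ — a shrinkable map has no such extension property in general — but only to extend the homotopy $K$. Concretely, the prescription $\sigma$ on $B\times\{0\}$ together with $(a,t)\mapsto K(a,1-t)$ on $A\times I$ agree on the overlap $A\times\{0\}$, and since $i$ is a closed cofibration the inclusion $B\times\{0\}\cup A\times I\hookrightarrow B\times I$ is again a closed cofibration; its homotopy extension property yields a map $L\colon B\times I\to Z$ with $L_0=\sigma$ and $L(i(a),t)=K(a,1-t)$, so that $L_1\circ i=s$.

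I would then read off the conclusion. Define $h:=q_Z\circ L_1\colon B\to\X$. Over $A$ we have $h\circ i=q_Z\circ L_1\circ i=q_Z\circ s$, and the $2$-isomorphism $q_Z\circ s\cong f$ supplied above is the required $\alpha$, so the upper triangle $2$-commutes. For the lower triangle, consider $G\colon B\times I\to\Y$ defined by $G_t:=g\circ p_Z\circ L_t$. Then $G_0=g\circ p_Z\circ\sigma=g$, while whiskering $\psi$ with $L_1$ gives $G_1=g\circ p_Z\circ L_1\cong p\circ q_Z\circ L_1=p\circ h$, so $G$ is a homotopy from $g$ to a map $2$-isomorphic to $p\circ h$. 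Finally, for $a\in A$ we have $p_Z(L(i(a),t))=p_Z(K(a,1-t))=i(a)$, whence $G_t(i(a))=g(i(a))$ for all $t$; thus $G\circ(i\times\id)$ is the constant homotopy, which is precisely the assertion that the lower triangle commutes up to a homotopy leaving $A$ fixed.

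The main obstacle — and the one genuinely interesting step — is the recognition in the second paragraph that strict extension of the section $s$ is impossible for a merely shrinkable (as opposed to fibrant) map, and that the homotopy-in-the-base freedom permitted by the statement is exactly what absorbs this failure: the extended family $L_t$ ceases to be a section away from $A$, and the discrepancy $p_Z\circ L_t\neq\id_B$ is precisely the homotopy $G$ relating $g$ and $p\circ h$. Everything else reduces to the cofibration product theorem and the bookkeeping of the $2$-isomorphisms $\varphi$ and $\psi$; the paracompact and CW cases run identically once the base extension of $p$ is known to be shrinkable, which is guaranteed by the respective hypothesis on $B$.
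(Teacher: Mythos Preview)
Your argument is correct and follows essentially the same route as the paper: reduce via base change along $g$ to a shrinkable map $p_Z\colon Z\to B$ of honest spaces with a partial section $s$ over $A$, use the fibrewise deformation retraction to produce a homotopy over $A$ between $s$ and the restriction of the global section $\sigma$, and then invoke the cofibration hypothesis to extend across $B\times I$. The paper phrases the last step as the existence of a retraction $B\times I\to (A\times I)\cup_{A\times\{0\}}B$ and precomposes, whereas you phrase it as the homotopy extension property; these are equivalent, and your version has the minor advantage of making the verification of the lower triangle (your homotopy $G_t=g\circ p_Z\circ L_t$) and its constancy on $A$ completely transparent. Your bookkeeping of the $2$-isomorphisms $\psi$ and $\alpha$ is also more explicit than the paper's.
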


\begin{proof}
  We can replace $p$ by its base extension $p_B \: B\times_{\X}\Y
  \to B$. So, we are reduced to the following situation: we have
  shrinkable map $p \: X \to B$ of topological spaces, a subspace $A
  \subset B$ which is a Hurewicz cofibration, and a section $s \: A
  \to X$. We want to extend $s$ to a map $\sigma \: B \to X$ such
  that $p\circ\sigma$ is homotopic to the identity map $\id_B \: B
  \to B$ via a homotopy that fixes $A$ pointwise.

  Since $p$ is shrinkable, it has a section $S \: B \to X$.
  Furthermore, $S|_A$ and $s$ are fiberwise homotopic via a homotopy
  $H \: A\times[0,1] \to X$. (Fiberwise means that $p\circ H \:
  A\times[0,1] \to A$ is equal to the first projection map.)  Set
        $$C=(A\times[0,1])\cup_{A\times\{0\}}B.$$
  The maps $H$ and $S$ glue to
  give a map $k \: C \to X$. The composition $p\circ k \: C \to B$
  is the collapse map that fixes $B$ pointwise and collapses
  $A\times[0,1]$ onto $A$ via projection.

  Since $A \subset B$ is a closed Hurewicz cofibration, the
  inclusion $C \subset B\times[0,1]$ admits a retraction $r\:
  B\times[0,1] \to C$ (\cite{Whitehead}, $\S$I.5.2). Let $r_1 \: B
  \to C$ be the restriction of $r$ to $B\times\{1\}$. Set $\sigma:=
  k \circ r_1$. It is easy to see
  that $\sigma$  has the desired property.
\end{proof}

\begin{rem}
  If in the above lemma we switch the roles of the upper and the
  lower triangles, namely, if we require that the upper triangle is
  homotopy commutative and the lower one is 2-commutative, then the
  lemma is true without the cofibration assumptions on  $i$.
\end{rem}

\begin{cor}{\label{C:retractionwe}}
  Let $f \: \X \to \Y$ be a pseudoshrinkable morphism of Serre
  topological stacks (\cite{Noohi}, $\S$17; also see Definition
  \ref{D:Serre}). Then, $f$ is a  weak equivalence, i.e., it induces
  isomorphisms on all homotopy
  groups (as defined in \cite{Noohi}, $\S$17).
\end{cor}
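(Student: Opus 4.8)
The plan is to characterize weak equivalences of Serre topological stacks by a relative lifting property against the sphere inclusions $S^{n-1}\hookrightarrow D^n$, and then to read this property off from Lemma \ref{L:retractionlift}. Recall (from \cite{Noohi}, §17) that for a point $x \: * \to \X$ of a Serre topological stack the homotopy group $\pi_n(\X,x)$ is computed as the set of based homotopy classes of based maps $(S^n,*) \to (\X,x)$, where both maps and homotopies are understood up to $2$-isomorphism; to prove that $f$ is a weak equivalence I must show that $f_* \: \pi_n(\X,x) \to \pi_n(\Y,f(x))$ is a bijection for every $n \geq 0$ and every basepoint $x$. Since $f$ is pseudoshrinkable, all of the base extensions arising below are shrinkable maps of spaces, so Lemma \ref{L:retractionlift} applies (with the rôle of its pseudoshrinkable morphism played by $f$) to every closed Hurewicz cofibration of CW complexes.

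For surjectivity of $f_*$ I would apply Lemma \ref{L:retractionlift} to the cofibration $\{*\}\hookrightarrow S^n$: given a based representative $g \: (S^n,*) \to (\Y,f(x))$, take the map on $A=\{*\}$ to be $x$ and the map on $B=S^n$ to be $g$, the basepoint datum of $g$ supplying the required $2$-isomorphism over $A$. The lemma then produces $h \: S^n \to \X$ agreeing with $x$ over the basepoint, so that $h$ is based at $x$, and satisfying $f\circ h \simeq g$ by a homotopy that fixes the basepoint. Hence $f_*[h]=[g]$, which gives surjectivity. (For $n=0$ one uses $\emptyset \hookrightarrow *$ instead, recovering surjectivity on $\pi_0$.)

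For injectivity I would take two based maps $h_0,h_1 \: (S^n,*) \to (\X,x)$ with $f\circ h_0 \simeq f\circ h_1$ via a based homotopy $G \: S^n\times[0,1] \to \Y$, and apply Lemma \ref{L:retractionlift} to the closed Hurewicz cofibration
$$ A:=\big(S^n\times\{0,1\}\big)\cup\big(\{*\}\times[0,1]\big)\hookrightarrow S^n\times[0,1]=:B, $$
with the map on $A$ given by $h_0$ and $h_1$ on the two ends and by the constant map $x$ on $\{*\}\times[0,1]$, and the map on $B$ given by $G$. The lemma yields $H \: S^n\times[0,1]\to\X$ restricting, up to $2$-isomorphism, to $h_0$, $h_1$ and the constant $x$ on $A$, that is, a based homotopy from $h_0$ to $h_1$. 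Thus $[h_0]=[h_1]$, giving injectivity. (The case $n=0$ uses $\{0,1\}\hookrightarrow[0,1]$ and shows $\pi_0$ is injective.)

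The routine verifications — that the indicated subspaces are indeed closed Hurewicz cofibrations of CW complexes, that the various restriction maps glue and carry compatible $2$-isomorphism data, and that $f_*$ is a homomorphism so that bijectivity upgrades to an isomorphism of groups — I would leave to the reader. I expect the genuine obstacle to lie not in these lifting arguments, which drop out cleanly from Lemma \ref{L:retractionlift}, but in matching the sphere-based description of $\pi_n$ used above with the definition adopted in \cite{Noohi}, §17, and in checking that for Serre topological stacks the naive notions of map and of homotopy (each taken up to $2$-isomorphism) actually compute these groups; this is precisely where the Serre hypothesis is needed.
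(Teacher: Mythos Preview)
Your argument is correct and follows the same strategy as the paper's: both reduce to Lemma~\ref{L:retractionlift} applied to standard CW cofibrations. For surjectivity you and the paper do exactly the same thing ($A=\{*\}$, $B=S^n$). For injectivity the paper instead takes $B=D^{n+1}$ and $A=\partial D^{n+1}=S^n$: given $[h]$ in the kernel of $f_*$, a based null-homotopy of $f\circ h$ is a map $g\colon D^{n+1}\to\Y$ restricting to $f\circ h$ on the boundary, and the lemma lifts it to a null-homotopy of $h$; this shows $\ker f_*$ is trivial, which suffices for $n\geq 1$ since $f_*$ is a group homomorphism (and for $n=0$ this pair is your $\{0,1\}\hookrightarrow[0,1]$). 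Your relative-cylinder pair establishes injectivity directly without appealing to the group structure, at the cost of a slightly more elaborate subcomplex $A$. Both routes are fine; the paper's is a line shorter, yours is uniform in $n$. Your closing caveat about matching the sphere-based description of $\pi_n$ with the definition in \cite{Noohi} is apt but applies equally to the paper's proof, which takes that identification for granted.
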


\begin{proof}
 To prove the surjectivity of $\pi_n(\X,x) \to \pi_n(\Y,f(x))$,
 apply Lemma \ref{L:retractionlift} to the case where $B$ is $S^n$ and
 $A$ is its base point.
 The injectivity follows by considering $B=\mathbb{D}^{n+1}$ and
 $A=\partial\mathbb{D}^{n+1}$.
\end{proof}

\begin{lem}{\label{L:prod}}
  Consider a family $f_i \: \X_i \to \Y$, $i \in I$, of
  representable morphisms of topological stacks, and let $f \:
  \prod_{\Y} \X_i \to \Y$ be their fiber product. (Note that
  $f$ is well-defined up to a 2-isomorphism.) If all $f_i$ are
  parashrinkable (respectively, pseudoshrinkable), then so is $f$.
  If all $f_i$ are  locally shrinkable (respectively, universal weak
  equivalence), then so is  $f$, provided $I$ is finite.
\end{lem}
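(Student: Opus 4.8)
The plan is to treat the four properties in two groups, since they call for genuinely different arguments and the hypotheses on $I$ differ. First I would record a reformulation that is immediate from Definitions \ref{D:retraction} and \ref{D:stackshrinkable} together with the fact that base extensions compose: a representable morphism is parashrinkable (respectively, pseudoshrinkable) precisely when its base extension along every map $T\to\Y$ from a paracompact space (respectively, a CW complex) is shrinkable. Since $T\times_{\Y}\bigl(\prod_{\Y}\X_i\bigr)\cong\prod_{T}(T\times_{\Y}\X_i)$ and each $f_i$ is representable, fixing such a $T$ reduces the parashrinkable and pseudoshrinkable assertions to the following purely point-set statement: the fibre product over $T$ of an arbitrary family of shrinkable maps of spaces $g_i\:P_i\to T$ is again shrinkable.

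The heart of the matter is this last statement, which I would prove by taking the ``product'' of the given data. Writing $s_i\:T\to P_i$ for the sections and $D^i\:P_i\times[0,1]\to P_i$ for the fibrewise strong deformation retractions onto $s_i(T)$, I would set $s:=(s_i)_i$ and $D\bigl((p_i)_i,u\bigr):=\bigl(D^i(p_i,u)\bigr)_i$. The tuple $D((p_i)_i,u)$ lands in the fibre product because each $D^i$ is fibrewise over $T$, and $D$ is continuous because a map into a product, hence into the fibre product (which carries the subspace topology from $\prod_i P_i$), is continuous as soon as each of its coordinates is. This is exactly the point where it matters that \emph{no} finiteness hypothesis is imposed. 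One then checks mechanically that $D$ is a fibrewise strong deformation retraction of the fibre product onto $s(T)$, so $f$ is parashrinkable (respectively, pseudoshrinkable) for arbitrary $I$.

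For the remaining two properties, finiteness of $I$ is essential, and I would use different mechanisms. For universal weak equivalences I would use that $\prod_{\Y}\X_i$ is an iterated fibre product and reduce by induction to $I=\{1,2\}$, factoring $f$ as $\X_1\times_{\Y}\X_2\to\X_2\llra{f_2}\Y$; the first arrow is the base extension of $f_1$, hence a universal weak equivalence by Lemma \ref{L:epibasechange0}, and after any base change along $T\to\Y$ the composite becomes a composite of two weak equivalences of spaces, which is a weak equivalence by two-out-of-three. For locally shrinkable morphisms I would instead base change along an arbitrary $T\to\Y$ to reduce to the finite family of locally shrinkable maps $g_i\:P_i\to T$, choose for each $i$ an open cover of $T$ over which $g_i$ is shrinkable, and pass to the common refinement, each of whose members is an intersection of finitely many of those opens. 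Over such a member every $g_i$ restricts to a shrinkable map, so the fibre product over it is shrinkable by the product construction of the previous paragraph, and therefore $f_T$ is locally shrinkable.

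I expect the main obstacle to be the verification that the product deformation $D$ is continuous and remains fibrewise in the (possibly infinite) case, which is the technical core of the whole argument. Closely related is the task of pinning down exactly where finiteness becomes unavoidable: it enters only for the last two properties, through the common-refinement step for locally shrinkable morphisms (an infinite intersection of opens need not be open) and through the two-out-of-three argument for universal weak equivalences (which processes the factors one at a time).
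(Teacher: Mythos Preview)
Your proposal is correct and follows essentially the same approach as the paper's own proof: reduce the parashrinkable/pseudoshrinkable cases to the statement that an arbitrary fibre product of shrinkable maps of spaces is shrinkable (you supply the explicit product section and deformation, which the paper leaves implicit), handle the locally shrinkable case by passing to a common refinement of the finitely many trivializing covers, and handle universal weak equivalences by induction using closure under composition. The only cosmetic difference is that you invoke two-out-of-three where the paper simply cites closure of universal weak equivalences under composition; the content is the same.
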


\begin{proof}
  In the parashrinkable case, it is enough to assume that $\Y=Y$ is
  a paracompact topological space. The result now follows from the
  fact that an arbitrary fiber product of shrinkable morphisms $X_i
  \to Y$, $i \in I$, of topological spaces is shrinkable.  The case
  of pseudoshrinkable morphisms is proved analogously.

  The case of locally shrinkable maps is also similar, except that
  for each $i \in I$ we may have a different open cover of $Y$ which
  makes $f_i$ shrinkable. Since $I$ is finite, choosing a common
  refinement will  make $f$ shrinkable.

  The case of universal weak equivalence maps is easily proved by
  induction (using the fact that the composition of two universal
  weak equivalences is again a universal weak equivalence).
\end{proof}

 \section{Classifying space of a topological
 stack}{\label{S:Nice}}

In this section we prove our first main result (Theorem \ref{T:nice}). We begin with an important
proposition.

\begin{prop}{\label{P:main}}
  Let $\X$ be a topological stack, and let $\bbX=[R \sst{} X]$
  be a presentation for it. Then, there is a natural map $\varphi\: B\bbX
  \to \X$  which fits in the following
   2-cartesian diagram
   $$\xymatrix@=16pt@M=8pt{   E\bbX \ar[r]^{f} \ar[d]_p & X \ar[d] \\
                              B\bbX \ar[r]_{\varphi} & \X} $$
  Here, $f$ is the map defined in  the proof of Lemma
  \ref{L:torsor}. Furthermore, $f$ is shrinkable. In particular,
  $\varphi$  is  locally shrinkable.
\end{prop}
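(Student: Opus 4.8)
The existence of the natural morphism $\varphi\:B\bbX\to\X$ and the fact that the displayed square is $2$-cartesian are already established in $\S$\ref{S:Classifying}, as a consequence of Lemma~\ref{L:torsor} (which exhibits $p\:E\bbX\to B\bbX$ as an $\bbX$-torsor) together with Lemma~\ref{L:cocycle} (which converts a torsor into a map to the quotient stack). Thus the real content lies in the last two assertions, and I would prove them in three steps: exhibit a section of $f$; promote it to a fibrewise strong deformation retraction, giving shrinkability of $f$; and then deduce local shrinkability of $\varphi$ by base change. For the section, write $u\:X\to R$ for the unit of the groupoid and set $\iota\:X\to E\bbX$, $\iota(x)=(1\cdot u(x),0,0,\cdots)$. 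Because $s\circ u=\id_X$ we get $f\circ\iota=\id_X$, and $\iota$ is continuous for the weak topology since only its $0$-th coordinate is switched on, where it is given by $u$.

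To retract $E\bbX$ onto $\iota(X)$, I would transport Milnor's contraction of the infinite join to the present setting, taking the basepoint of the fibre over $x$ to be $u(x)$. The retraction is a concatenation of two fibrewise homotopies. The first is a ``spreading'' homotopy that frees the $0$-th coordinate: it redistributes the content $t_i\alpha_i$ of the $i$-th coordinate into the coordinates $2i$ and $2i+1$, with time-dependent weights but keeping the \emph{same} arrow $\alpha_i$. The second grows a unit in the freed coordinate: it sends an element with vanishing $0$-th weight to the element whose $0$-th coordinate is $\tau\cdot u(x)$ and whose remaining weights have been multiplied by $1-\tau$, terminating at $\iota(x)$. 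Each operation preserves the common source $s(\alpha_i)=x=f(\mathfrak{e})$, so $f$ is constant along the homotopy and the endpoint is $\iota\circ f$.

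The main obstacle I anticipate is verifying that this homotopy is continuous for the weak topology on $E\bbX$ --- the topology generated by the coordinate functions $t_i$ and the partial maps $\alpha_i$ on $t_i^{-1}(0,1]$ --- and that it can be arranged to be \emph{strong}, i.e.\ stationary on $\iota(X)$. The continuity is precisely why the spreading must be set up so that every output coordinate receives the content of a \emph{single} input coordinate, so that no slot is ever asked to carry two distinct arrows; granting this, continuity comes down to that of the weights and of $u$. Strongness requires a little care, since the naive spreading displaces $\iota(x)$ from the $0$-th coordinate; this is remedied either by reparametrising so that the amount of spreading degenerates on $\iota(X)$, or by noting that $\iota(X)\hookrightarrow E\bbX$ is a closed cofibration, whence any fibrewise deformation retraction onto it may be upgraded to a strong one. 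This yields that $f$ is shrinkable.

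Finally, by the $2$-cartesian square the map $f\:E\bbX\to X$ is exactly the base extension of $\varphi\:B\bbX\to\X$ along the atlas $a\:X\to\X$. A shrinkable map is locally shrinkable (Lemma~\ref{L:shrinkable}), and $a$ is an epimorphism, so the second assertion of Lemma~\ref{L:epibasechange0}, applied with $g=a$, shows that $\varphi$ is locally shrinkable. This completes the plan.
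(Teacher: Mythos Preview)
Your overall architecture is exactly the paper's: the section $\iota=\sigma$ is the same, the final deduction of local shrinkability of $\varphi$ from shrinkability of $f$ via Lemma~\ref{L:epibasechange0} (using that the atlas $X\to\X$ is an epimorphism) is the same, and your second homotopy --- growing a unit in a freed $0$-th slot --- is correct and is essentially the paper's $\Psi_2$ shifted by one index.

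The genuine divergence, and the one place where your plan has a gap, is the first homotopy. The paper does \emph{not} spread; it simply pushes all the weight into index~$1$ by the straight-line homotopy
\[
  \Psi_1\big(t,(t_0\alpha_0,t_1\alpha_1,\ldots)\big)
   =\big((1-t)t_0\,\alpha_0,\ \big((1-t)t_1+t\big)\alpha_1,\ (1-t)t_2\,\alpha_2,\ldots\big),
\]
and then $\Psi_2$ trades the surviving $\alpha_1$ for $\id_x$ in slot~$0$. Your proposed spreading, ``redistribute $t_i\alpha_i$ into coordinates $2i$ and $2i{+}1$'', cannot begin at the identity: at time~$0$ the output slot $2i$ would carry the arrow $\alpha_i$, whereas the identity demands $\alpha_{2i}$ there. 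So as written, the first stage is a homotopy between two spread maps, not from $\id_{E\bbX}$, and the concatenation does not retract $E\bbX$ onto $\iota(X)$. The Milnor-type contraction you have in mind can be made to work (e.g.\ by a telescoping shift that moves one coordinate at a time over shrinking subintervals), but that requires a different and more delicate description than the one you gave; your own caveat, that each output slot must receive content from a \emph{single} input slot, is precisely the constraint that rules out the $2i/2i{+}1$ redistribution as a homotopy from the identity. The paper's $\Psi_1$ sidesteps this entirely, at the cost of the (shared) subtlety that $\alpha_1$ must be interpreted when $t_1=0$; your observations about strongness and the cofibration upgrade are well taken and apply equally to the paper's formula.
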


\begin{proof}
  The $\bbX$-torsor $p\: E\bbX \to B\bbX$ defined in Lemma \ref{L:torsor}
  furnishes the map $\varphi$ and the 2-cartesian diagram; see Lemma \ref{L:cocycle}.

 Let us show that $f$ is shrinkable.
   Define the section $\sigma \: X \to E\bbX$ by
  $$x \mapsto (1\id_x,0\id_x,\cdots,0\id_x,\cdots).$$
  This identifies $X$ with a closed subspace of $E\bbX$.
  We define the desired strong deformation retraction
   $\Psi \: [0,2]\times E\bbX \to E\bbX $ by juxtaposing
   the maps $\Psi_1 \: [0,1]\times E\bbX \to E\bbX $
   and $\Psi_2 \: [1,2]\times E\bbX \to E\bbX $ which are defined as follows:
{\small
$$\Psi_1 \: \big(t,(t_0\al_0,t_1\al_1,\cdots,t_n\al_n,\cdots)\big) \mapsto
      \big((t_0-tt_0)\al_0,(t_1-tt_1+t)\al_1,\cdots,(t_n-tt_n)\al_n,\cdots\big),$$}
 and
{\small
 $$\Psi_2 \: \big(t,(t_0\al_0,t_1\al_1,\cdots,t_n\al_n,\cdots)\big) \mapsto
    \big((t-1)\id_x,(2-t)\alpha_1, 0\id_x,\cdots,0\id_x,\cdots\big).$$}
  Here, $x$ is the common source of the $\alpha_i$.
  That $\varphi$ is  locally shrinkable follows from
  Lemma \ref{L:epibasechange0}.
\end{proof}

\begin{defn}{\label{D:classifying}}
  Let $\X$ be a topological stack. By a {\bf classifying space} for $\X$ we mean a
  topological space $X$ and a map $\varphi \: X \to \X$ such that $\varphi$ is a 
  universal weak equivalence (Definition \ref{D:stackshrinkable}). By abuse of noation, we often
  drop the $\varphi$ from the notation and call $X$ a classifying space for $\X$.
\end{defn}

Let us rephrase  the above theorem as a statement about existence of
classifying spaces for topological stacks. 

\begin{thm}[{\bf Existence of a classifying space}]{\label{T:nice}}
 Every topological stack $\X$ admits an atlas $\varphi \: X \to \X$
   which is  locally shrinkable.
   In particular, $(X,\varphi)$ is a classifying space for  $\X$.
\end{thm}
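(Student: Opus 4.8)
The plan is to deduce Theorem \ref{T:nice} almost immediately from the preceding Proposition \ref{P:main}, so the real work has already been packaged into that proposition. First I would observe that the statement has two assertions that need to be addressed: (i) that $\X$ admits an atlas $\varphi \: X \to \X$ which is locally shrinkable, and (ii) that any such atlas is a classifying space in the sense of Definition \ref{D:classifying}, i.e.\ a universal weak equivalence. For (i), since $\X$ is a topological stack it admits, by definition, a presentation by a topological groupoid $\bbX = [R \sst{} X]$. I would then set $X := B\bbX$ and let $\varphi \: B\bbX \to \X$ be the natural map produced in Proposition \ref{P:main}. That proposition states precisely that this $\varphi$ is locally shrinkable, so it only remains to check that $\varphi$ is an atlas, i.e.\ a representable epimorphism admitting local sections. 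This follows because $\varphi$ is locally shrinkable: by Lemma \ref{L:shrinkable}, locally shrinkable morphisms are epimorphisms, and the 2-cartesian diagram of Proposition \ref{P:main} exhibits $\varphi$ as representable (its base change along the atlas $X \to \X$ is the honest map $p \: E\bbX \to B\bbX$ of spaces).

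For (ii), I would chase the chain of implications recorded in Lemma \ref{L:shrinkable}. Reading off that diagram, locally shrinkable implies parashrinkable, parashrinkable implies pseudoshrinkable, and pseudoshrinkable implies universal weak equivalence. Hence a locally shrinkable morphism is in particular a universal weak equivalence, which is exactly the condition in Definition \ref{D:classifying} for $(X,\varphi)$ to be a classifying space. I would state this as the ``in particular'' clause of the theorem, with the single citation to Lemma \ref{L:shrinkable} doing all the bookkeeping.

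The only subtlety worth flagging is the distinction between the map-of-spaces level and the stack level: the shrinkability of the map $f \: E\bbX \to X$ in Proposition \ref{P:main} is what drives everything, but to conclude that $\varphi$ itself is locally shrinkable one must use that shrinkability-type properties are defined for representable morphisms of stacks via base change (Definition \ref{D:stackshrinkable}) and are stable under base change (Lemma \ref{L:epibasechange0}). Proposition \ref{P:main} has already carried out this translation — it explicitly concludes ``$\varphi$ is locally shrinkable'' using Lemma \ref{L:epibasechange0} — so I would simply invoke it rather than reprove it. I do not anticipate a genuine obstacle here: the theorem is essentially a restatement of Proposition \ref{P:main} combined with the formal implications of Lemma \ref{L:shrinkable}, and the proof should be only two or three sentences long.
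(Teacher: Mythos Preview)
Your proposal is correct and matches the paper's own proof essentially line for line: choose a groupoid presentation $\bbX$, take $\varphi \: B\bbX \to \X$ from Proposition \ref{P:main}, and cite Lemma \ref{L:shrinkable} for the ``in particular'' clause. The paper's proof is indeed only two sentences; your additional remarks on representability and the epimorphism property are sound but are left implicit in the original.
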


\begin{proof}
  Choose an arbitrary presentation $\bbX$ for $\X$. Then, the morphism
  $\varphi \: B\bbX \to \X$ of Proposition \ref{P:main} is the desired atlas; see
  Lemma \ref{L:shrinkable}.
\end{proof}

\begin{cor}{\label{C:Morita}}
  Every topological groupoid $[R\sst{}X]$ is Morita equivalent (\cite{Noohi}, $\S$8)
  to a topological groupoid $[R'\sst{}X']$ in which the source and target maps
  are locally shrinkable (in particular, they are universal weak equivalences).
\end{cor}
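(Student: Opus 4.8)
The plan is to feed the locally shrinkable atlas produced by Theorem \ref{T:nice} into the standard dictionary between atlases and groupoid presentations. Write $\X=[X/R]$ for the quotient stack of the given groupoid $\bbX=[R\sst{}X]$, and let $\varphi\:X'\to\X$ be the atlas furnished by Theorem \ref{T:nice} (so $X'=B\bbX$), which is locally shrinkable. Since $\varphi$ is an atlas, it is in particular a representable epimorphism, so the fiber product $R':=X'\times_{\X}X'$ is again a topological space and the two projections $s,t\:R'\to X'$ assemble into a topological groupoid $[R'\sst{}X']$ whose quotient stack is canonically equivalent to $\X$. As $\bbX$ also presents $\X$, the two groupoids present the same stack and are therefore Morita equivalent (\cite{Noohi}, $\S$8). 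This settles the Morita equivalence claim, and $[R'\sst{}X']$ is the desired groupoid.

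Next I would verify that the structure maps $s$ and $t$ are locally shrinkable, and here the essential observation is that each projection is a base extension of $\varphi$. Concretely, in the $2$-cartesian square
$$\xymatrix@=16pt@M=8pt{ R' \ar[r]^{t} \ar[d]_{s} & X' \ar[d]^{\varphi} \\ X' \ar[r]_{\varphi} & \X }$$
the map $s$ is exactly the base extension of $\varphi\:X'\to\X$ along $\varphi\:X'\to\X$, and symmetrically $t$ is a base extension of $\varphi$. Since $\varphi$ is locally shrinkable, Lemma \ref{L:epibasechange0} immediately gives that $s$ is locally shrinkable, and applying the same argument to the other projection shows that $t$ is locally shrinkable as well.

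Finally, the parenthetical assertion that $s$ and $t$ are universal weak equivalences is read off from the implications collected in Lemma \ref{L:shrinkable}, namely locally shrinkable $\Rightarrow$ parashrinkable $\Rightarrow$ pseudoshrinkable $\Rightarrow$ universal weak equivalence. The only points demanding any care are the representability of $\varphi$ (which guarantees $R'$ is a genuine space, so that the shrinkability notions of Definition \ref{D:retraction} literally apply) and the identification of each structure map with a base extension of $\varphi$; both are routine. I therefore do not expect a real obstacle: the entire force of the corollary is already packaged inside Theorem \ref{T:nice} and Lemma \ref{L:epibasechange0}, and the proof is purely a matter of reinterpreting the locally shrinkable atlas as a groupoid presentation.
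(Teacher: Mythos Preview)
Your proof is correct and is exactly the paper's approach: the paper's proof consists of the single sentence ``The desired groupoid is $[B\bbX\times_{\varphi,\X,\varphi}B\bbX \sst{} B\bbX]$, where $\bbX=[R\sst{}X]$,'' and you have simply unpacked why this works, correctly identifying the source and target maps as base extensions of the locally shrinkable $\varphi$ and invoking Lemma~\ref{L:epibasechange0} (or, equivalently, Definition~\ref{D:stackshrinkable} directly).
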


\begin{proof}
  The desired groupoid is $[B\bbX\times_{\varphi,\X,\varphi}B\bbX \sst{} B\bbX]$,
  where $\bbX=[R\sst{}X]$.
\end{proof}

\begin{rem}{\label{R:simplicial}}
   We will see in $\S$\ref{S:Htpytype} that the atlas $\varphi \: X \to
   \X$ of Theorem \ref{T:nice} can be used to define a weak homotopy
   type for the stack $\X$. We saw in Remark \ref{R:simp} that
   the diagram
   $$\xymatrix@=16pt@M=8pt{  B\bbX \ar[r]^{q} \ar[rd]_{\varphi} &
                                      |\bbX|  \ar@{..>}[d] \\
                                             & [X/R]} $$
  can not always be filled. It is, however, true in many cases that
  the map $q$ is a weak-equivalence. In such cases, the geometric realization
  $|\bbX|$ of a groupoid presentation $\bbX$ for $\X$ can very well
  be used to define the weak homotopy type of $\bbX$. The question
  that remains to be answered is, under what condition is the map
  $q\: B\bbX \to |\bbX|$
  a weak equivalence?
\end{rem}

\section{Some category theoretic lemmas}{\label{S:Lemmas}}

In this section, we prove a few technical results of category
theoretic nature. These results will be needed in our functorial
description of the classifying space of a topological stack
($\S$\ref{S:Htpytype}). The reader who is not interested in category
theoretical technicalities can proceed to the next section.

Throughout this section, the set up will be as follows. Let $\mfC$
be a 2-category with fiber products. Assume all 2-morphisms in
$\mfC$ are invertible. Let $[\mfC]$ be the category obtained by
identifying 2-isomorphic 1-morphisms in $\mfC$.  Let $\sfB$ be a
full subcategory of $\mfC$ which is closed under fiber products.
Assume that $\sfB$ is a 1-category, that is, there is at most one
2-morphism between every two morphisms in $\sfB$. The example 
to keep in mind is where $\mfC=\TopSt$ is the 2-category of 
topological stacks and $\sfB=\Top$ is the category of topological
spaces (realized as a subcategory of $\TopSt$ via Yoneda embedding).

Let $R$ be a class of morphisms in $\sfB$ which
 contains the identity morphisms and is closed
under base extension and 2-isomorphism. We define $\tilde{R}$ to be
the class of morphisms $f \: y \to x$ in $\mfC$ such that for every
morphism $p\: t \to x$,  $t \in \sfB$, the base extension $r$ of $f$
along $p$ belongs to $R$.
       $$\xymatrix@=16pt@M=8pt{ t\times_x y
                \ar[r] \ar[d]_r & y \ar[d]^{f} \\
                  t \ar[r]_p  & x} $$
In other words, $\tilde{R}$ is the smallest class of morphisms in 
$\mfC$ which is invariant under  base extension and
contains $R$ and all identity morphisms. The example to keep in mind is
where $R$ is the class of (weak) homotopy equivalences in $\Top$. In this
case, $\tilde{R}$ will be the class of representable universal (weak) 
equivalences in $\TopSt$. Also see Proposition \ref{P:univwe}.

\begin{lem}{\label{L:adjoint}}
  The set up being as above, assume that for
  every object $x$ in $\mfC$, there exists an object $\Theta(x)$ in
  $\sfB$ together with a morphism $\varphi_x \: \Theta(x) \to x$
  which belongs to $\tilde{R}$.   Then, the inclusion functor $\sfB
  \to [\mfC]$ induces a fully faithful functor $\iota \: R^{-1}\sfB
  \to R^{-1}[\mfC]$. Furthermore, $\Theta$ naturally  extends to a
  functor  $R^{-1}[\mfC] \to R^{-1}\sfB$ that is a right adjoint to
  $\iota$. Finally, $\Theta$ can
  be defined so that the counits of adjunction are the identity maps and
  the units of  adjunction belong to $\tilde{R}$.
\end{lem}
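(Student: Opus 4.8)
The plan is to read this as a coreflection statement: the object\-wise resolutions $\varphi_x \: \Theta(x) \to x$ should assemble into a functor adjoint to $\iota$, and full faithfulness of $\iota$ will then come for free. First I would dispatch the bookkeeping. The inclusion $\sfB \hookrightarrow [\mfC]$ carries $R$ into the class inverted in $R^{-1}[\mfC]$, so it induces $\iota \: R^{-1}\sfB \to R^{-1}[\mfC]$; by the universal property of localization it is enough to build a functor $\bar{\Theta} \: [\mfC] \to R^{-1}\sfB$ that inverts $R$, and to take $\Theta$ to be its factorization through $R^{-1}[\mfC]$. I would also make at the outset the normalization $\Theta(a)=a$ and $\varphi_a=\id_a$ for $a\in\sfB$, which is legitimate since $\id_a$ base\-extends to identities and hence lies in $\tilde{R}$.

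The heart of the argument is the definition of $\bar{\Theta}$ on morphisms. Given $g \: x \to x'$ in $[\mfC]$, I would base\-extend $\varphi_{x'}\in\tilde{R}$ along the composite $g\circ\varphi_x \: \Theta(x)\to x'$, producing a cartesian square with legs $r \: P\to\Theta(x)$ and $q \: P\to\Theta(x')$. The point to stress is that, although $\sfB$ is a priori only closed under fiber products of diagrams already lying in $\sfB$ and the apex $x'$ need not lie in $\sfB$, the defining property of $\tilde{R}$ forces $r$ to lie in $R$; since $R$ consists of morphisms of $\sfB$, this alone puts $P\in\sfB$, and then $q$ is a morphism of $\sfB$ by fullness. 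As $r\in R$ is invertible in $R^{-1}\sfB$, I set $\bar{\Theta}(g):=q\circ r^{-1}$.

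Then I would verify functoriality. Independence of the chosen pullback and invariance under $2$\-isomorphism of $g$ follow because pullbacks are canonical up to unique isomorphism and the recipe sees $g$ only through $g\varphi_x$ in $[\mfC]$; for identities the diagonal section of $P=\Theta(x)\times_x\Theta(x)$ gives $q\circ r^{-1}=\id$. The composition law $\bar{\Theta}(g'g)=\bar{\Theta}(g')\bar{\Theta}(g)$ is where the real work lies, and is the step I expect to be the main obstacle. Here I would introduce the triple fiber product $Q=\Theta(x)\times_{x'}\Theta(x')\times_{x''}\Theta(x'')$, observe that each projection of $Q$ onto the pullbacks defining $\bar{\Theta}(g)$, $\bar{\Theta}(g')$ and $\bar{\Theta}(g'g)$ is a base extension of one of the maps $\varphi_{x'},\varphi_{x''}$ and hence lies in $R$, and use repeated pullback pasting to rewrite both composites as the same span out of $Q$. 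The delicate part is keeping the formal inverses $r^{-1}$ under control inside $R^{-1}\sfB$ and checking that every comparison map manufactured by the pasting genuinely lands in $R$ (so becomes invertible); it is convenient to organize this via the right\-fraction calculus supported by the base\-extension closure of $\tilde{R}$, enlarging $R$ to its saturation if one wishes spans to compose strictly, which does not change the localization.

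Once $\bar{\Theta}$ is a functor, that it inverts $R$ is immediate from the normalization: for $g\in R$ the defining pullback is taken along identities, so $\bar{\Theta}(g)=g$, already invertible in $R^{-1}\sfB$. Hence $\bar{\Theta}$ descends to $\Theta \: R^{-1}[\mfC]\to R^{-1}\sfB$. Finally I would exhibit the adjunction through its $($co$)$unit: the maps $\varphi_x \: \Theta(x)\to x$ are natural in $x$, naturality being exactly the identity $\varphi_{x'}\circ q=(g\varphi_x)\circ r$ of the defining square read in $R^{-1}[\mfC]$, and under the normalization the two triangle identities collapse to $\bar{\Theta}(\varphi_x)=\id$ (again the diagonal computation) and $\varphi_a=\id$ for $a\in\sfB$. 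This yields the adjunction with $\iota$ on the left and $\Theta$ on the right, in which the structure maps $\varphi_x$ furnish the $($co$)$unit lying in $\tilde{R}$ while the transformation over $\sfB$ is the identity, as asserted. Full faithfulness of $\iota$ is then the standard consequence of the unit being an isomorphism, or directly $\Hom_{R^{-1}[\mfC]}(\iota a,\iota b)\cong\Hom_{R^{-1}\sfB}(a,\Theta\iota b)=\Hom_{R^{-1}\sfB}(a,b)$.
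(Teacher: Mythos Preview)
Your proposal is correct and the construction of $\Theta$ on objects and morphisms is identical to the paper's: both form the $2$-cartesian square with $z=\Theta(x)\times_{y}\Theta(y)$ and represent $\Theta(f)$ by the span $(r,g)$, and both normalize $\Theta$ to the identity on $\sfB$. You also flag the same subtlety the paper handles, namely that $R$ need not be closed under composition so one must pass to its closure $\bar{R}$ when invoking the right-fraction calculus.

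The only genuine difference is in how the adjunction is verified. The paper establishes it by proving directly that precomposition with $\varphi_x$ gives a bijection $\Hom_{R^{-1}\sfB}(t,\Theta(x))\to\Hom_{R^{-1}[\mfC]}(t,x)$: it writes down explicit functors $P,Q$ between the span categories $\Roof(t,\Theta(x))$ and $\Roof(t,x)$ and exhibits natural transformations relating $PQ$ and $QP$ to the identities, which forces a bijection on $\pi_0$. You instead package the adjunction via the unit--counit description and check the triangle identities, which with the normalization reduce to $\bar{\Theta}(\varphi_x)=\id$ (your diagonal computation) and $\varphi_a=\id$. Both verifications are standard and equivalent; the paper's route has the mild bonus (noted in its Remark following the lemma) that $P$ and $Q$ are themselves adjoint, giving a $2$-categorical enhancement, while your route is somewhat more streamlined and makes the functoriality check---which the paper dismisses as ``straightforward''---more explicit via the triple fiber product $Q$.
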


\begin{proof}
  In the proof we will make use of the calculus of right fractions for
  $\bar{R}$ ($\S$\ref{A:Calculus}) to describe morphisms in the
  localized categories. Here, $\bar{R}$ is the closure of $R$ under composition.
  (Notice that localization with respect to $R$ and $\bar{R}$ yields
  the same result.)

  First, let us explain  how to extend $\Theta$ to a functor. We
  will assume  $\Theta(x)=x$, whenever $x \in \sfB$.

  Given a  morphism $f \: x \to y$ in $[\mfC]$, set
  $z:=\Theta(x)\times_{y}\Theta(y)$, as in the 2-cartesian diagram
       $$\xymatrix@=16pt@M=8pt{
               z \ar[r]^(0.4){g} \ar[d]_r & \Theta(y) \ar[d]^{\varphi_{y}} \\
                  \Theta(x) \ar[r]_{f\circ \varphi_{x}} & y} $$
  By hypothesis, we have $r \in R$. We define
  $\Theta(f) \: \Theta(x) \to \Theta(y)$ to be the span $(r,g)$.
   The proof that $\Theta$ is well-defined and respects composition is straightforward.

  To prove that $\Theta$ is a right adjoint to $\iota$, we show that
  composing with the morphism $\varphi_{x} \: \Theta(x) \to x$
  induces a bijection
    $$\Hom_{R^{-1}\sfB}(t,\Theta(x)) \risom \Hom_{R^{-1}[\mfC]}(t,x)$$
  for every $t \in \sfB$. With the notation of $\S$\ref{A:Calculus},
  we have to show that the map
     $$P \: \Roof(t,\Theta(x)) \to \Roof(t,x)$$
  which sends a span $(r,g)$ to $(r,\varphi_{x}\circ g)$ induces
  a bijection
    $$\pi_0(P) \: \pi_0\Roof(t,\Theta(x)) \to
         \pi_0\Roof(t,x).$$

  We define a functor
     $$Q \: \Roof(t,x) \to \Roof(t,\Theta(x))$$
  as follows. Let $(r,g) \in \Roof(t,x)$. Then $Q(r,g)$ is
  defined to be $(r\circ\rho,g')$, as in the diagram
     $$\xymatrix@=16pt@M=8pt{
          & s\times_{x}\Theta(x) \ar[r]^{g'} \ar[d]_{\rho}
                            &  \Theta(x) \ar[d]^{\varphi_{x}} \\
                                  t  & v \ar[r]_g \ar[l]^r & x} $$
  There are natural transformations of functors
       $$\id_{\Roof(t,\Theta(x))} \Rightarrow Q\circ P$$
  and $P\circ Q \Rightarrow \id_{\Roof(t,x)}$. This is enough to
  establish that $\pi_0(P)$ and $\pi_0(Q)$ induce inverse
  bijections between $\pi_0\Roof(t,\Theta(x))$ and
  $\pi_0\Roof(t,x)$. (For example, this can be seen by noticing that
  $P$ and $Q$ induce an equivalence of categories between the
  groupoids generated by inverting all arrows in
  $\Roof(t,\Theta(x))$  and $\Roof(t,x)$.)

  Fully faithfulness of $\iota$ follows from the fact that the unit
  of adjunction $\iota\circ\Theta \Rightarrow \id_{\sfB}$ is
  an isomorphism.
\end{proof}

\begin{cor}{\label{C:eq}}
  The functors $\iota$ and $\Theta$ induce an equivalence of
  categories $R^{-1}\sfB\cong\tilde{R}^{-1}[\mfC]$. In fact,
  on the right hand side, instead of inverting $\tilde{R}$,
  it is enough to invert $R$ together with
  all the morphisms $\varphi_x \: \Theta(x) \to x$.
\end{cor}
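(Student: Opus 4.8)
The plan is to upgrade the adjunction $\iota \dashv \Theta$ of Lemma \ref{L:adjoint} to an adjoint equivalence after localizing $[\mfC]$ further at $\tilde{R}$. First I observe that $R \subseteq \tilde{R}$: since $R$ is closed under base extension (and $\sfB$ under fiber products), any $f \in R$ pulls back along every $p\: t \to x$ with $t \in \sfB$ to a morphism of $R$, which is exactly the defining condition for membership in $\tilde{R}$. Hence there is a canonical localization functor $L \: R^{-1}[\mfC] \to \tilde{R}^{-1}[\mfC]$, and composing with $\iota$ yields $L\iota \: R^{-1}\sfB \to \tilde{R}^{-1}[\mfC]$, the candidate equivalence.

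The key step is to show that $\Theta \: R^{-1}[\mfC] \to R^{-1}\sfB$ sends every morphism of $\tilde{R}$ to an isomorphism. Fix $f \: a \to b$ in $\tilde{R}$. By the construction in Lemma \ref{L:adjoint}, $\Theta(f)$ is the span $(r,g)$ arising from the $2$-cartesian square with corner $z := \Theta(a)\times_b \Theta(b)$, where $r \: z \to \Theta(a)$ lies in $R$ and $g \: z \to \Theta(b)$; such a span is invertible in $R^{-1}\sfB$ as soon as its numerator $g$ also lies in $\bar{R}$, so it suffices to prove this. I factor the pullback in two stages. Set $W := a\times_{f,b,\varphi_b}\Theta(b)$; the projection $W \to \Theta(b)$ is the base extension of $f$ along $\varphi_b \: \Theta(b) \to b$, and since $f \in \tilde{R}$ and $\Theta(b) \in \sfB$ this projection lies in $R$ --- which, as $R$ is a class of morphisms of $\sfB$, forces $W \in \sfB$. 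Now $z = \Theta(a)\times_a W$ and $g$ factors as $z \to W \to \Theta(b)$, the second arrow being the $R$-morphism just found and the first being the base extension of $\varphi_a \in \tilde{R}$ along $W \to a$. Because $W \in \sfB$, the definition of $\tilde{R}$ applies a \emph{second} time, showing $z \to W$ lies in $R$. Thus $g$ is a composite of two $R$-morphisms, so $g \in \bar{R}$ and $\Theta(f)$ is an isomorphism. I expect this two-stage pullback --- and specifically the observation that the intermediate object $W$ automatically lands in $\sfB$, which is what licenses the second appeal to the definition of $\tilde{R}$ --- to be the main obstacle.

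With this established, the universal property of localization lets $\Theta$ factor through $L$ as a functor $\bar{\Theta} \: \tilde{R}^{-1}[\mfC] \to R^{-1}\sfB$. It then remains to check that $L\iota$ and $\bar{\Theta}$ are mutually inverse. On one side, $\bar{\Theta}\circ L\iota = \Theta\circ\iota = \id_{R^{-1}\sfB}$, since $\Theta(x)=x$ for $x \in \sfB$ by Lemma \ref{L:adjoint}. On the other, the natural transformation $\iota\Theta \Rightarrow \id$ with components $\varphi_x \: \Theta(x) \to x$ becomes, upon applying $L$, a natural isomorphism $L\iota\bar{\Theta}\Rightarrow \id_{\tilde{R}^{-1}[\mfC]}$, because each $\varphi_x$ lies in $\tilde{R}$ and is therefore inverted in $\tilde{R}^{-1}[\mfC]$. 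This yields the equivalence $R^{-1}\sfB \cong \tilde{R}^{-1}[\mfC]$.

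For the sharper claim I would rerun the argument verbatim with $\tilde{R}$ replaced by the smaller class $S := R \cup \{\varphi_x\}_{x}$. The only features used above were that $\Theta$ inverts the class being localized and that each $\varphi_x$ belongs to it. Since $S \subseteq \tilde{R}$, the computation of the second paragraph already shows $\Theta$ inverts every morphism of $S$, and $\varphi_x \in S$ by definition; hence $\Theta$ factors through $S^{-1}[\mfC]$ and the identical mutual-inverse check gives $R^{-1}\sfB \cong S^{-1}[\mfC]$. Thus inverting $R$ together with the maps $\varphi_x$ already suffices.
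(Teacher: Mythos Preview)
Your argument is correct. The paper states the corollary without proof, treating it as immediate from Lemma~\ref{L:adjoint}; your write-up is essentially the natural expansion of that implicit argument, and the two-stage pullback showing $\Theta$ inverts every $\tilde{R}$-morphism is exactly the right verification. One small remark: in the final step, when you pass the counit transformation $\iota\Theta \Rightarrow \id$ through $L$ to obtain $L\iota\bar{\Theta} \Rightarrow \id_{\tilde{R}^{-1}[\mfC]}$, naturality with respect to morphisms not in the image of $L$ needs a word---but since $\tilde{R}^{-1}[\mfC]$ is generated by the image of $[\mfC]$ together with formal inverses of $\tilde{R}$-morphisms, and naturality for an isomorphism implies naturality for its inverse, this is automatic.
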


\begin{rem}
  It can be shown that the functors $P$ and $Q$ appearing in the
  proof of Lemma \ref{L:adjoint} are indeed adjoints
    $$P \: \Roof(t,\Theta(x))   \rightleftharpoons \Roof(t,x): \!
    Q$$
   Therefore, if we work with the 2-categorical enhancements of
   $R^{-1}\sfB$ and $R^{-1}[\mfC]$ (see Remark \ref{R:enhance}), then
   the adjunction between $\iota$ and $\Theta$ can  be enhanced
   to a 2-categorical adjunction.
\end{rem}

\begin{lem}{\label{L:sub}}
      Let $F \: \sfB \rightleftharpoons \sfC: \! G$ be an
  adjunction between categories. Let $\sfB' \subset \sfB$ and
  $\sfC' \subset \sfC$ be full subcategories such that
  $F(\sfB')  \subset \sfC'$ and $G(\sfC') \subset \sfB'$. Then, the
  restriction of $F$ and $G$ to these subcategories induces an
  adjunction
   $F' \: \sfB' \rightleftharpoons \sfC': \! G'$
\end{lem}

\begin{proof}
   Obvious.
\end{proof}

\begin{lem}{\label{L:adjunction}}
  Let $F \: \sfB \rightleftharpoons \sfC: \! G$ be an
  adjunction between categories. Let $S \subset \sfB$ and
  $T \subset \sfC$ be classes of morphisms such that $F(S)\subset T$
  and $G(T)\subset S$. Then, we have an induced adjunction
    $$\tilde{F} \: S^{-1}\sfB \rightleftharpoons
                    T^{-1}\sfC: \! \tilde{G}$$
  between the localized categories. Furthermore, if $F$ is fully
  faithful, then so is $\tilde{F}$.
\end{lem}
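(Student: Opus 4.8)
The plan is to build $\tilde{F}$ and $\tilde{G}$ from the universal property of localization and then descend the unit and counit of the original adjunction. Write $L_{\sfB} \: \sfB \to S^{-1}\sfB$ and $L_{\sfC} \: \sfC \to T^{-1}\sfC$ for the localization functors, and let $\eta \: \id_{\sfB} \Rightarrow GF$ and $\varepsilon \: FG \Rightarrow \id_{\sfC}$ be the unit and counit of the adjunction $F \dashv G$. Since $F(S) \subset T$, the composite $L_{\sfC}\circ F \: \sfB \to T^{-1}\sfC$ sends every morphism in $S$ to an isomorphism, so by the universal property of $L_{\sfB}$ it factors uniquely as $\tilde{F}\circ L_{\sfB}$ for a functor $\tilde{F} \: S^{-1}\sfB \to T^{-1}\sfC$. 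Symmetrically, the hypothesis $G(T)\subset S$ yields a functor $\tilde{G} \: T^{-1}\sfC \to S^{-1}\sfB$ with $\tilde{G}\circ L_{\sfC} = L_{\sfB}\circ G$.

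Next I would descend the unit and counit. From the two factorizations one gets the strict equalities $\tilde{G}\tilde{F}L_{\sfB} = \tilde{G}L_{\sfC}F = L_{\sfB}GF$ and, similarly, $\tilde{F}\tilde{G}L_{\sfC} = L_{\sfC}FG$. Since the (Gabriel--Zisman) localization is the identity on objects, I would define $\tilde{\eta}$ and $\tilde{\varepsilon}$ objectwise by $\tilde{\eta}_b := L_{\sfB}(\eta_b)$ and $\tilde{\varepsilon}_c := L_{\sfC}(\varepsilon_c)$, targeting $\tilde{G}\tilde{F}$ and $\tilde{F}\tilde{G}$ respectively via the displayed equalities. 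The content to be verified is naturality with respect to \emph{all} morphisms of the localized categories; because every such morphism is a composite of images $L(f)$ and formal inverses $L(s)^{-1}$ with $s$ in $S$ (resp.\ $T$), naturality reduces to the naturality of $\eta$ (resp.\ $\varepsilon$) in $\sfB$ (resp.\ $\sfC$), the case of a formal inverse following by inverting the corresponding commuting square. This produces natural transformations $\tilde{\eta} \: \id_{S^{-1}\sfB} \Rightarrow \tilde{G}\tilde{F}$ and $\tilde{\varepsilon} \: \tilde{F}\tilde{G} \Rightarrow \id_{T^{-1}\sfC}$.

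To conclude that $\tilde{F} \dashv \tilde{G}$, I would check the two triangle identities for the data $(\tilde{F},\tilde{G},\tilde{\eta},\tilde{\varepsilon})$. Each is an equality of natural transformations between functors out of $S^{-1}\sfB$ or $T^{-1}\sfC$; whiskering with the localization functors, which are surjective (indeed bijective) on objects, reduces it to the corresponding triangle identity for $(F,G,\eta,\varepsilon)$, which holds by hypothesis. For the last assertion, if $F$ is fully faithful then its unit $\eta$ is a natural isomorphism; hence $\tilde{\eta} = L_{\sfB}\eta$ is again an isomorphism, since a localization functor carries isomorphisms to isomorphisms. A left adjoint whose unit is invertible is fully faithful, so $\tilde{F}$ is fully faithful.

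The only genuinely delicate point is the descent in the second step: one must ensure that applying the localization functor componentwise yields a transformation that is natural with respect to the new, inverted morphisms of $S^{-1}\sfB$ and $T^{-1}\sfC$, and not merely with respect to images of morphisms of $\sfB$ and $\sfC$. Everything else is a formal consequence of the universal property of localization together with the triangle identities, so no appeal to a calculus of fractions is needed.
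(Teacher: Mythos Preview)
Your argument is correct, and it takes a genuinely different route from the paper's. You construct $\tilde{F}$ and $\tilde{G}$ directly from the 1-dimensional universal property of localization, then descend $\eta$ and $\varepsilon$ componentwise and verify naturality on the generators $L(f)$ and $L(s)^{-1}$ of the localized hom-sets. The paper instead invokes the Yoneda-style reformulation of adjunction (Mac Lane, IV.1, Theorem~2): an adjunction $F\dashv G$ is the same as a compatible family of adjunctions $\sfK^G \: \sfK^{\sfB}\rightleftharpoons \sfK^{\sfC}:\sfK^F$ for all $\sfK$; it then observes that $\sfK^{S^{-1}\sfB}\subset\sfK^{\sfB}$ and $\sfK^{T^{-1}\sfC}\subset\sfK^{\sfC}$ are full subcategories preserved by $\sfK^F$ and $\sfK^G$, so the adjunction restricts (this is Lemma~\ref{L:sub}), and the Yoneda direction run backwards gives $\tilde{F}\dashv\tilde{G}$. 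The paper's argument is slicker in that the descent of the unit and counit is entirely absorbed into the fully faithful embedding $\sfK^{S^{-1}\sfB}\hookrightarrow\sfK^{\sfB}$, so no explicit naturality check on inverted morphisms is required; your argument is more elementary and self-contained, needing only the standard description of morphisms in a localization as zig-zags. Both reach the fully-faithful clause the same way, via invertibility of the unit. One small remark: your ``check on generators'' step can be packaged more cleanly by using the 2-categorical universal property of localization, namely that precomposition with $L_{\sfB}$ induces a \emph{full} embedding $\operatorname{Fun}(S^{-1}\sfB,\sfD)\hookrightarrow\operatorname{Fun}(\sfB,\sfD)$; this immediately gives a unique $\tilde{\eta}$ with $\tilde{\eta}L_{\sfB}=L_{\sfB}\eta$, and likewise for $\tilde{\varepsilon}$ and the triangle identities, and is in fact exactly what underlies the paper's approach.
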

\begin{proof}
  We will use the 2-categorical formulation of adjunction
  (\cite{MacLane}, IV.1, Theorem 2). A standard Yoneda type argument
  shows that to give an adjunction
         $$F \: \sfB \rightleftharpoons \sfC: \! G$$
  is the same thing as giving adjunctions
    $$\mathsf{K}^G \: \mathsf{K}^{\sfB} \rightleftharpoons
     \mathsf{K}^{\sfC}: \!\mathsf{K}^F,$$
   for every category $\mathsf{K}$,
   which are functorial with respect to change of $\mathsf{K}$. Here,
   $\mathsf{K}^{\sfB}$ stands for the category of functors from
   $\sfB$ to $\mathsf{K}$.

 By the universal property of localization,
 $\mathsf{K}^{S^{-1}\sfB}$ is naturally identified with a full
 subcategory of $\mathsf{K}^{\sfB}$. Similarly,
 $\mathsf{K}^{S^{-1}\sfC}$ is naturally identified with a full
 subcategory of $\mathsf{K}^{\sfC}$. The assumption that $F$
 and $G$ respect $S$ and $T$ implies that
 $\mathsf{K}^G(\mathsf{K}^{S^{-1}\sfB}) \subset
 \mathsf{K}^{S^{-1}\sfC}$ and
 $\mathsf{K}^F(\mathsf{K}^{S^{-1}\sfC}) \subset
 \mathsf{K}^{S^{-1}\sfB}$.
 So, we have an induced adjunction
     $$\mathsf{K}^G \: \mathsf{K}^{S^{-1}\sfB} \rightleftharpoons
     \mathsf{K}^{S^{-1}\sfC} : \!\mathsf{K}^F.$$
 Since these adjunctions are functorial with respect to $K$, the
 Yoneda argument gives the desired adjunction
 $\tilde{F} \: S^{-1}\sfB \rightleftharpoons  T^{-1}\sfC: \! \tilde{G}$.

 The statement about fully faithfulness follows from the fact that
 a left adjoint $F$ is fully faithful if the unit of adjunction is
 an isomorphism of functors.
\end{proof}

\subsection{Lemma \ref{L:adjoint} for diagram categories}{\label{SS:diagram}}

We prove a version of Lemma \ref{L:adjoint} for diagrams. The set up
will be as in Lemma \ref{L:adjoint}. We will assume, in addition,
that $R$ is closed under fiber products. This means that, given two
morphisms $Y \to X$ and $Z \to X$ in $R$, the fiber product
$Z\times_{X} Y \to X$ is also in $R$. Since $R$ is closed under base
extension, this is automatic if $R$ is closed under composition.

Suppose we are given a category $\sfD$, which we think of as a
diagram. We assume either of  the following holds:\footnote{The
common feature of these two conditions, which is all we need to
prove our lemma, is that $R$ is closed under products indexed by any
set whose cardinality is less that or equal to the degree (i.e., the
number of arrows coming out) of some object in $\sfD$.}

\begin{itemize}
 \item[$\mathbf{A.}$] The category $\sfD$ has the property that
  every object $d$ in $\sfD$ has finite ``degree'', that is,
  there are only finitely many arrows coming out of $d$; or,
 \item[$\mathbf{B.}$] The class $R$ is closed under arbitrary fiber
  products.
\end{itemize}

We denote the 2-category of lax functors from $\sfD$ to $\mfC$ by
$\mfC^{\sfD}$. We think of objects in $\mfC^{\sfD}$ as diagrams in
$\mfC$ indexed by $\sfD$.

\begin{lem}{\label{L:diagramadjoint}}  
  Let $T$ (resp., $\tilde{T}$) be the class of all transformations $\tau$ in the
  diagram category $[\mfC^{\sfD}]$ which have the property that for
  every $d \in \sfD$ the corresponding morphism $\tau_d$ in $\mfC$
  is in $R$ (resp., $\tilde{R}$). Then, the  inclusion functor $\sfB^{\sfD} \to
  [\mfC^{\sfD}]$ induces a fully faithful functor $\iota \:
  T^{-1}\sfB^{\sfD} \to T^{-1}[\mfC^{\sfD}]$,  and
  $\iota$ has  a right adjoint $\Theta^{\sfD}$.
  Furthermore, $\Theta^{\sfD}$ can be defined so that the counits
  of adjunction are the identity transformations and
  the units of adjunction are honest transformations in $\tilde{T}$.
\end{lem}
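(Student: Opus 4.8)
The plan is to reduce Lemma~\ref{L:diagramadjoint} to the already-established Lemma~\ref{L:adjoint} by exhibiting the diagram situation as an instance of the non-diagram situation. Concretely, I would apply Lemma~\ref{L:adjoint} with the 2-category $\mfC$ replaced by $\mfC^{\sfD}$, the full subcategory $\sfB$ replaced by $\sfB^{\sfD}$, and the class $R$ of morphisms replaced by the class $T$ of objectwise-$R$ transformations in $\sfB^{\sfD}$. The first task is therefore to verify that this new triple $(\mfC^{\sfD},\sfB^{\sfD},T)$ satisfies all the hypotheses demanded in the set-up preceding Lemma~\ref{L:adjoint}: that $\mfC^{\sfD}$ has fiber products with invertible 2-morphisms, that $\sfB^{\sfD}$ is a full sub-1-category closed under fiber products, and that $T$ contains the identities and is closed under base extension and 2-isomorphism. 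Most of these are inherited objectwise from the corresponding properties of $(\mfC,\sfB,R)$, since fiber products of diagrams are computed pointwise; I would note that closure of $T$ under base extension uses precisely the closure of $R$ under base extension at each object $d$.

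The key verification — and the reason conditions $\mathbf{A}$ or $\mathbf{B}$ are imposed — is the hypothesis of Lemma~\ref{L:adjoint} that every object of $\mfC^{\sfD}$ admits a morphism from an object of $\sfB^{\sfD}$ lying in the enlarged $\widetilde{T}$. Given a diagram $P \: \sfD \to \mfC$, I would build $\Theta^{\sfD}(P)$ objectwise by choosing the classifying-space morphisms $\varphi_{P(d)} \: \Theta(P(d)) \to P(d)$ from Lemma~\ref{L:adjoint}, and then assembling these into an honest diagram in $\sfB^{\sfD}$. The obstacle here is functoriality: for an arrow $d \to d'$ in $\sfD$, the induced map $P(d) \to P(d')$ does not automatically lift to a strict map $\Theta(P(d)) \to \Theta(P(d'))$, because $\Theta$ of a morphism is only a span (a zig-zag) as constructed in Lemma~\ref{L:adjoint}, not a genuine morphism. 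This is exactly where I expect the main difficulty to lie.

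To resolve this I would form, for each $d$, the fiber product of $\Theta(P(d)) \to P(d)$ with the pullbacks $P(d') \to P(d)$ along all arrows $d \to d'$ emanating from $d$, so as to produce a single object over $P(d)$ that maps compatibly into every $\Theta(P(d'))$. This amounts to replacing $\Theta(P(d))$ by $\prod_{d \to d'} \Theta(P(d'))\times_{P(d')}P(d)$, fibered over $P(d)$; the structure map of this fiber product again lies in $R$ because $R$ is closed under base extension and under the relevant fiber products. Here is precisely where the footnote's hypothesis enters: forming this product requires closure of $R$ under products indexed by the set of arrows out of $d$, which is guaranteed to be finite under $\mathbf{A}$ and arbitrary under $\mathbf{B}$. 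With this modified $\Theta^{\sfD}(P)$ in hand, the maps between objects are now strictly defined and the collection forms a genuine diagram in $\sfB^{\sfD}$ with an objectwise-$R$ transformation down to $P$, so the associated transformation lies in $\widetilde{T}$.

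Once the existence hypothesis of Lemma~\ref{L:adjoint} is verified for $(\mfC^{\sfD},\sfB^{\sfD},T)$, the conclusions of that lemma transfer directly: the inclusion $\sfB^{\sfD} \to [\mfC^{\sfD}]$ induces a fully faithful $\iota \: T^{-1}\sfB^{\sfD} \to T^{-1}[\mfC^{\sfD}]$ with a right adjoint $\Theta^{\sfD}$, and $\Theta^{\sfD}$ can be arranged so that the counits are identities and the units lie in $\widetilde{T}$. I would close by remarking that the calculus-of-fractions machinery used in the proof of Lemma~\ref{L:adjoint} applies verbatim in the diagram setting, so no new homotopical input is needed beyond the objectwise constructions. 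The entire argument is thus a bootstrapping of the single-object case, with the only genuinely new content being the fiber-product trick that rigidifies the objectwise classifying spaces into an honest diagram.
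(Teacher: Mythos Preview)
Your strategy is essentially the paper's: reduce to Lemma~\ref{L:adjoint} applied to the pair $(\mfC^{\sfD},\sfB^{\sfD})$, and for each diagram $P$ build the required object of $\sfB^{\sfD}$ as the fiber product $\prod_{d\to d'}\Theta(P(d'))\times_{P(d')}P(d)$ over $P(d)$. The paper packages this same construction as a \emph{relative right Kan extension} (Appendix~II, Proposition~\ref{P:Kan}) along the functor $F\:\sfE\to\sfD$ from the discrete category on $\Ob\sfD$; unwinding that Kan extension at an object $d$ gives exactly your product over $(d\!\downarrow\!\sfE)$, i.e.\ over arrows out of $d$. So the underlying formula is identical.

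Two points in your write-up are under-argued and are precisely what the Kan-extension formalism buys. First, you assert that ``the maps between objects are now strictly defined and the collection forms a genuine diagram in $\sfB^{\sfD}$''; this is true, but one must check that for $\alpha\:d\to d'$ the induced map on products is well-defined (via precomposition $\gamma\mapsto\gamma\circ\alpha$ on indexing arrows and base change) and that these maps compose strictly, including compatibility with the coherence 2-cells of the lax functor $P$. The paper offloads all of this to the universal property of the relative Kan extension. Second, you do not verify that $\Theta^{\sfD}(P)(d)$ actually lands in $\sfB$; the paper singles out the identity-arrow factor $\Theta(P(d))$ and observes that the product of the remaining factors maps to $P(d)$ by a morphism in $\tilde{R}$, so its base extension to $\Theta(P(d))\in\sfB$ lies in $\sfB$. (A minor slip: the structure map $\Theta^{\sfD}(P)(d)\to P(d)$ lies in $\tilde{R}$, not in $R$, since $P(d)$ need not be in $\sfB$; you correct yourself one clause later when you say the transformation lies in $\tilde{T}$.)
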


\begin{proof}
 We use Lemma \ref{L:adjoint} with $\mfC$ and $\sfB$ replaced by the
 corresponding diagram categories. We have to verify that for every
 functor $p \: \sfD \to \mfC$, there exists a functor
 $\Theta^{\sfD}(p) \: \sfD \to \sfB$ together with a  natural
 transformation of functors $\varphi_p \: \Theta^{\sfD}(p)
 \Rightarrow p$ such that every morphism $\varphi_{p,d}$ in this
 transformation is in $\tilde{R}$. To show this, we will make use of
 the relative Kan extension of
 $\S$\ref{A:Kan}. Let us fix the set up first.

 Let $\mfU$ be the 2-category fibered over $\mfC$ whose fiber
 $\mfU(x)$ over  an object $x\in \mfC$  is the 2-category of
 morphisms $h \: a \to x$  in $\tilde{R}$. More precisely,
 the objects in $\mfU$ are morphisms $a \to x$ in $\tilde{R}$.
 The morphism in $\mfU$
 are 2-commutative squares
              $$\xymatrix@=6pt@M=6pt{
         b   \ar[rr]   \ar[dd] & & a \ar[dd]   \\
                 \ar@{=>}[ur]_(0.4){\tau}     &  & \\
   y   \ar[rr]  & &  x  }$$
 in $\mfC$ whose vertical arrows are in $\tilde{R}$; such a morphism
 is defined to be cartesian if the square
 is 2-cartesian.  The 2-morphisms in $\mfU$ are defined in the obvious way.

 It follows that morphisms in $\mfU(x)$ are
 2-commutative triangles in $\mfC$. It is easy to see that
 $\mfU(x)$ is indeed a 1-category and not just a 2-category. The
 functor $\pi \: \mfU \to \mfC$ is the forgetful functor which sends
 $h \: a \to x$ to $x$. The pull-back functor $f^{\Box} \: \mfU(y) \to
 \mfU(x)$ for a morphism $f \: x \to y$ is the base extension along
 $f$.

 Let $\sfE$ be the discrete category
 with the same set of objects as $\sfD$, and let $\sth \: \sfE
 \to \sfD$ be the functor which sends an object to itself.
 Either of the conditions
 ($\mathbf{A}$) or  ($\mathbf{B}$) above implies that $\pi \:
 \mfU \to \mfC$ is $\sth$-complete (Definition \ref{D:relativecomplete})
 at every $p \: \sfD \to \mfC$.

 We can now define $\Theta^{\sfD}(p)$ and $\varphi_p$ as follows. Let $p \: \sfD \to
 \mfC$ be a diagram in $\mfC$.  For every object $d \in \sfD$, we
 denote $p(d)$ by $x_d$. For each $d$, choose a map $\varphi_d \:
 \Theta(x_d) \to x_d$,  with $\Theta(x_d)$ in $\sfB$ and $\varphi_d$
 in $\tilde{R}$.
 This gives a functor $P \: \sfE \to \mfU$, $d \mapsto \varphi_d$,
 which lifts $p$. By Proposition \ref{P:Kan}, we have a right Kan
 extension $R\sth(P) \: \sfD \to \mfU$. To give such a functor
 $R\sth(P)$ is the same thing as giving a functor
 $\Theta^{\sfD}(p) \: \sfD \to \mfC$ together with a natural
 transformation of functors $\varphi_p \: \Theta^{\sfD}(p)
 \Rightarrow p$. More precisely, for every $d \in
 \sfD$, we define $\Theta^{\sfD}(p)(d)$ and $\varphi_{p,d}$ by
   $$R\sth(P)_d=\ \ \Theta^{\sfD}(p)(d) \llra{\varphi_{p,d}}
                                x_d \ \ \in \mfU(x_d).$$

 All that is left to check is that $\Theta^{\sfD}(p)$  factors
 through $\sfB$. To see this, note that, by the construction of the
 Kan extension (see proof of Proposition \ref{P:Kan}),
 $\Theta^{\sfD}(p)(d)$ is the product in $\mfU(d)$ of a family of
 objects $\{y_0,y_1,\cdots\}$ in $\mfU(d)$, one of which, say $y_0$,
 is $\Theta(x_d)$. (Note the abuse of notation: each $y_i$ is
 actually a morphism $y_i \to x_d$.) Denote the product of the rest
 of the objects by $y$. So $\Theta^{\sfD}(p)(d)=\Theta(x_d)\times
 y$, the product being taken in $\mfU(d)$. Note that product in the
 fiber category $\mfU(d)$ is calculated by taking fiber product
 over $x_d$ in $\mfC$. Hence, the following diagram is
 2-cartesian in $\mfC$.
       $$\xymatrix@=16pt@M=8pt{
           \Theta^{\sfD}(p)(d) \ar[r] \ar[d] & y \ar[d]^h \\
                         \Theta(x_d)  \ar[r]_{\varphi_d} & x_d} $$
 Since $h \: y \to x_d$ is in $\mfU(d)$, its base extension  along
 $\varphi_d$ is in $R$. That is, $\Theta^{\sfD}(p)(d)$ is in $\sfB$
 and $\Theta^{\sfD}(p)(d) \to \Theta(x_d)$ is a
 morphism in $R$. This shows that $\Theta^{\sfD}(p)(d)$ is in $\sfB$,
 which is what we wanted to prove.
\end{proof}

While proving Lemma \ref{L:diagramadjoint} we have also proved the
following.

\begin{cor}{\label{C:honest}}
  Let $\mfS$ be a small sub 2-category of $\mfC$, and denote the
  inclusion of $\mfS$ in $\mfC$ by $i_{\sfS}$. Then, there is a
  functor $\Theta_{\mfS} \: \mfS \to \sfB$ together with a natural
  transformation $\varphi_{\mfS} \: \Theta_{\mfS} \Rightarrow
  i_{\mfS}$ such that for every $x \in
  \mfC$, $\varphi_{\mfS}(x) \: \Theta_{\mfS}(x) \to x$ is in
  $\tilde{R}$.
\end{cor}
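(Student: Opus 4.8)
The plan is to extract the desired statement directly from the machinery already assembled in the proof of Lemma \ref{L:diagramadjoint}, specializing the indexing category $\sfD$ to the small $2$-category $\mfS$. The key observation is that the construction of $\Theta^{\sfD}(p)$ and the transformation $\varphi_p$ given there does not actually use that $\sfD$ is a $1$-category in any essential way; it only uses that $\pi \: \mfU \to \mfC$ is $\sth$-complete at the relevant diagram, which in turn follows from condition ($\mathbf{A}$) or ($\mathbf{B}$). So the first step is to take $\sfD := \mfS$ (viewed as a small indexing $2$-category) and let $p \: \mfS \to \mfC$ be the inclusion functor $i_{\mfS}$ itself, thought of as an object of $\mfC^{\mfS}$.

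Next I would apply the fiber-product lifting construction verbatim. For each object $x \in \mfS$, choose a morphism $\varphi_x \: \Theta(x) \to x$ in $\tilde{R}$ with $\Theta(x) \in \sfB$, which exists by the standing hypothesis (in the application, this is precisely the classifying-space atlas $\varphi \: B\bbX \to \X$ of Theorem \ref{T:nice}). This defines a lift $P \: \sfE \to \mfU$ of $i_{\mfS}$ over the discrete category $\sfE$ on the objects of $\mfS$. Invoking the relative right Kan extension $R\sth(P) \: \mfS \to \mfU$ from Proposition \ref{P:Kan}, and unwinding it exactly as in the proof of Lemma \ref{L:diagramadjoint}, yields a functor $\Theta_{\mfS} \: \mfS \to \mfC$ together with a natural transformation $\varphi_{\mfS} \: \Theta_{\mfS} \Rightarrow i_{\mfS}$. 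The $2$-cartesian-square argument at the end of that proof shows that each $\Theta_{\mfS}(x)$ lands in $\sfB$ and that each component $\varphi_{\mfS}(x) \: \Theta_{\mfS}(x) \to x$ lies in $\tilde{R}$ (indeed its base extensions along maps from $\sfB$ lie in $R$). This is exactly the assertion of the corollary.

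The only point requiring a little care is that $\mfS$ is a genuine $2$-category rather than a $1$-category, so $i_{\mfS}$ is a lax (or pseudo-)functor and $\varphi_{\mfS}$ must be a natural transformation of such. Here I would note that the relative Kan extension formalism of $\S$\ref{A:Kan} is set up at the level of the fibered $2$-category $\mfU \to \mfC$, and the coherence data needed to promote $R\sth(P)$ to a transformation of lax functors is produced automatically by the universal property of the Kan extension, just as in the $1$-categorical case. I expect this coherence bookkeeping to be the main (though essentially routine) obstacle: one must check that the comparison $2$-isomorphisms witnessing functoriality of $\Theta_{\mfS}$ are compatible with those of $i_{\mfS}$. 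Since the completeness hypothesis on $\pi \: \mfU \to \mfC$ is exactly what guarantees the products defining the Kan extension exist and are well-behaved, no new input beyond the proof of Lemma \ref{L:diagramadjoint} is needed, and the corollary follows by restating what that proof already established for an arbitrary small diagram.
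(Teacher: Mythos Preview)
Your approach is correct and rests on the same underlying construction as the paper's, but the paper organizes the argument slightly more economically. Rather than applying the Kan extension machinery directly to the 2-category $\mfS$ and then verifying the 2-categorical coherence by hand (the ``routine obstacle'' you flag), the paper first passes to the associated 1-category $[\mfS]$, applies Lemma~\ref{L:diagramadjoint} to the diagram $p=[i_{\mfS}]\colon[\mfS]\to[\mfC]$ exactly as stated, and then simply precomposes the resulting $\Theta^{\sfD}([i_{\mfS}])$ and $\varphi_{[i_{\mfS}]}$ with the projection $\mfS\to[\mfS]$. This sidesteps the coherence bookkeeping entirely, since any functor $\mfS\to\sfB$ must factor through $[\mfS]$ anyway ($\sfB$ being a 1-category). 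Your direct approach buys nothing extra here, but it is not wrong---the relative Kan extension in \S\ref{A:Kan} is indeed set up for 2-categorical $\sfD$, so the construction goes through as you describe.
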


\begin{proof}
  Let $\sfD=[\mfS]$ and think of it as a diagram category. Think of
 $p=[i_{\mfS}] \: \sfD \to [\mfC]$ as a $\sfD$-diagram in $[\mfC]$.
 Then, with the notation of  Lemma \ref{L:diagramadjoint}, the
 sought after $\Theta_{\mfS}$ and $\varphi_{\mfS}$ are exactly
 $\Theta^{\sfD}([i_{\mfS}])$ and $\varphi_{[i_{\mfS}]}$ (precomposed
 with the projection $\mfS\to[\mfS]$).
\end{proof}

\section{Functorial description of the classifying space}{\label{S:Htpytype}}

Theorem \ref{T:nice} is  saying that every topological stack has a
classifying space. In this section, we use the category theoretic
lemmas of $\S$\ref{S:Lemmas} to give a functorial formulation of
this fact (Theorem \ref{T:whtpytype} and Theorem \ref{T:htpytype}).

\begin{prop}{\label{P:univwe}}
  Let $R \subset \Top$ be the class of locally shrinkable maps
  (Definition \ref{D:retraction}). Then, the inclusion functor $\Top
  \to \TopSt$ induces a fully faithful functor $\iota \: R^{-1}\Top
  \to R^{-1}[\TopSt]$, and $\iota$ has a right adjoint $\Theta \:
  R^{-1}[\TopSt] \to R^{-1}\Top$. Furthermore, $\Theta$ can
  be defined so that the counits of adjunction are the identity maps and
  the units of adjunction are honest morphisms of topological stacks
  which are locally shrinkable.
\end{prop}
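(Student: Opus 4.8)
The plan is to verify that Proposition~\ref{P:univwe} is exactly an instance of the abstract machinery in Lemma~\ref{L:adjoint}, specialized to the concrete situation $\mfC=\TopSt$ and $\sfB=\Top$. So the whole proof reduces to checking the hypotheses of that lemma and then quoting it.

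First I would fix the data. Take $\mfC=\TopSt$, the $2$-category of topological stacks; it has fiber products and all $2$-morphisms are invertible, as required. Take $\sfB=\Top$, realized as a full sub-$1$-category of $\TopSt$ via the Yoneda embedding; it is closed under fiber products, and there is at most one $2$-morphism between any two maps of spaces, so $\sfB$ is genuinely a $1$-category. Let $R$ be the class of locally shrinkable maps of topological spaces (Definition~\ref{D:retraction}). I need to check that $R$ contains all identities, is closed under base extension, and is closed under $2$-isomorphism. The identity $\id_X\:X\to X$ is shrinkable (take the identity section and the constant deformation), hence locally shrinkable; closure under base extension is exactly the content of the space-level statement underlying Lemma~\ref{L:epibasechange0}; and closure under $2$-isomorphism is automatic in $\Top$ since $2$-isomorphisms there are identities. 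With these verified, $\tilde{R}$ is the class of representable morphisms $f\:\Y\to\X$ in $\TopSt$ whose base extension along every map $T\to\X$ from a space $T$ lands in $R$; by Definition~\ref{D:stackshrinkable} this is precisely the class of locally shrinkable representable morphisms of stacks.

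Next I would supply the one nontrivial input that Lemma~\ref{L:adjoint} demands, namely that every object $\X\in\TopSt$ admits a map $\varphi_{\X}\:\Theta(\X)\to\X$ with $\Theta(\X)\in\Top$ and $\varphi_{\X}\in\tilde{R}$. This is exactly Theorem~\ref{T:nice}: choosing a presentation $\bbX=[R\sst{}X]$ for $\X$, the classifying-space map $\varphi\:B\bbX\to\X$ of Proposition~\ref{P:main} is an atlas that is locally shrinkable, i.e.\ lies in $\tilde{R}$ by Definition~\ref{D:stackshrinkable}. Hence the existence hypothesis of Lemma~\ref{L:adjoint} holds, and I set $\Theta(\X):=B\bbX$ (and $\Theta(X)=X$ for $X$ already a space, as allowed in the lemma's proof).

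Finally I would simply invoke Lemma~\ref{L:adjoint} with this data: it yields the fully faithful functor $\iota\:R^{-1}\Top\to R^{-1}[\TopSt]$, the right adjoint $\Theta\:R^{-1}[\TopSt]\to R^{-1}\Top$, the statement that the counits can be taken to be identity maps, and that the units lie in $\tilde{R}$. Translating ``unit in $\tilde{R}$'' back through Definition~\ref{D:stackshrinkable} gives exactly the assertion that the units are honest locally shrinkable morphisms of topological stacks, which is the last clause of the proposition. The main (and really the only) obstacle is to confirm that the abstract hypotheses apply verbatim: that $\Top$ sits inside $\TopSt$ as a full sub-$1$-category closed under fiber products, and that locally shrinkable maps of spaces form a class stable under base change containing identities. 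Both are immediate from the definitions and from the space-level version of Lemma~\ref{L:epibasechange0}, so the proof is essentially a bookkeeping reduction to Lemma~\ref{L:adjoint} together with the existence Theorem~\ref{T:nice}.
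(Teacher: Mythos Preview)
Your proof is correct and follows the same approach as the paper: apply Lemma~\ref{L:adjoint} with $\mfC=\TopSt$, $\sfB=\Top$, and $R$ the class of locally shrinkable maps, using Theorem~\ref{T:nice} to supply the required $\varphi_{\X}\:\Theta(\X)\to\X$. Your write-up is more explicit about verifying the hypotheses on $R$, but the strategy is identical.
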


\begin{proof}
 Apply Lemma \ref{L:adjoint} to the inclusion $\Top \to \TopSt$ with
 $R$ being the class of locally shrinkable maps of topological {\em
 spaces}. For every topological stack $\X$, the existence of a
 topological space $\Theta(\X)$ which satisfies the requirement of
 Lemma \ref{L:adjoint} is guaranteed by Theorem \ref{T:nice}.
\end{proof}

\subsection{Functorial description of the classifying space}

The following theorem implies that  the 
classifying space of a topological stack $\X$ is functorial in $\X$. 
In particular, the classifying space of a topological stack can be used
to define a weak homotopy type for it.

\begin{thm}{\label{T:whtpytype}}
 Let $S_{w.e.}$ be the class of weak equivalences in $\Top$. Let
 $\Top_{w.e.}:=S_{w.e.}^{-1}\Top$ be the category of weak homotopy
 types. Then, 
 the inclusion functor $\Top \to \TopSt$ induces a
 fully faithful functor $\iota \: \Top_{w.e.} \to
 S_{w.e.}^{-1}[\TopSt]$, and $\iota$ has a right adjoint $\Theta \:
 S_{w.e.}^{-1}[\TopSt] \to\Top_{w.e.} $. Furthermore, $\Theta$ can be
 defined so that the counits of adjunction are the identity maps and
 the units of adjunction are honest morphisms of
 topological stacks  which are  locally shrinkable.
\end{thm}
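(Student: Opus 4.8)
The plan is to derive Theorem~\ref{T:whtpytype} from the abstract machinery already assembled, specifically Proposition~\ref{P:univwe} together with the localization-compatibility lemma (Lemma~\ref{L:adjunction}). The key observation is that we have \emph{two} classes of morphisms in $\Top$ at play: the class $R$ of locally shrinkable maps appearing in Proposition~\ref{P:univwe}, and the class $S_{w.e.}$ of weak equivalences appearing in the present statement. By Lemma~\ref{L:shrinkable}, every locally shrinkable map is a universal weak equivalence, hence in particular a weak equivalence; therefore $R \subseteq S_{w.e.}$. This inclusion is exactly what lets us pass from the already-established adjunction between $R^{-1}\Top$ and $R^{-1}[\TopSt]$ to the desired adjunction after further localizing at weak equivalences.

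First I would set up the two localizations so that Lemma~\ref{L:adjunction} applies. Proposition~\ref{P:univwe} gives a fully faithful $\iota \: R^{-1}\Top \to R^{-1}[\TopSt]$ with right adjoint $\Theta$. I want to further invert, on the space side, the class $S_{w.e.}$, and on the stack side the corresponding class of representable universal weak equivalences (or simply the images of weak equivalences). The plan is to take $F = \iota$ and $G = \Theta$ in Lemma~\ref{L:adjunction}, with $\sfB = R^{-1}\Top$, $\sfC = R^{-1}[\TopSt]$, and $S$, $T$ the images of the weak-equivalence classes in these localized categories. One checks $F(S) \subseteq T$ trivially (the inclusion functor sends a weak equivalence of spaces to the same map viewed as a morphism of stacks), and $G(T) \subseteq S$ because $\Theta$ of a weak equivalence between stacks is, by construction, built from base extensions and spans lying over weak equivalences, hence is a weak equivalence of spaces. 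Lemma~\ref{L:adjunction} then yields the induced adjunction between the doubly-localized categories, and since $R \subseteq S_{w.e.}$ the double localization $S_{w.e.}^{-1}(R^{-1}\Top)$ collapses to $S_{w.e.}^{-1}\Top = \Top_{w.e.}$, and similarly on the stack side.

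The one point requiring genuine care, and which I expect to be the main obstacle, is the verification that $G = \Theta$ really respects the weak-equivalence classes, i.e. that $\Theta$ sends a morphism of stacks whose defining span lies over a weak equivalence to a weak equivalence of spaces. This reduces to the following: given $f \: \X \to \Y$ which is a (representable, universal) weak equivalence, the induced map $\Theta(f) \: \Theta(\X) \to \Theta(\Y)$, defined in the proof of Lemma~\ref{L:adjoint} via the span $(r,g)$ built from $z = \Theta(\X) \times_\Y \Theta(\Y)$, is a weak equivalence. The two legs of this span are base extensions of $\varphi_\X$, $\varphi_\Y$, and of $f$, and the property of being a universal weak equivalence is preserved under base extension by Lemma~\ref{L:epibasechange0}; combining this with the two-out-of-three property for weak equivalences in the relevant span gives the claim.

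Finally, I would record that the additional structural assertions transfer automatically: Lemma~\ref{L:adjunction} preserves fully faithfulness of the left adjoint, so $\iota \: \Top_{w.e.} \to S_{w.e.}^{-1}[\TopSt]$ is fully faithful; and the normalization that the counits are identities and the units are honest locally shrinkable morphisms of stacks is inherited directly from the corresponding normalization in Proposition~\ref{P:univwe}, since the induced unit and counit of the localized adjunction are the images of those of the original adjunction under the localization functors. This completes the plan; the only real work is the two-out-of-three argument for $\Theta$ respecting weak equivalences, everything else being a formal consequence of the lemmas in $\S\ref{S:Lemmas}$.
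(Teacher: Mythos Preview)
Your proposal is correct and follows essentially the same route as the paper: invoke Proposition~\ref{P:univwe} to get the adjunction at the level of $R^{-1}\Top$ and $R^{-1}[\TopSt]$, then apply Lemma~\ref{L:adjunction} with $S$ the class of weak equivalences to pass to the further localization, using $R\subseteq S_{w.e.}$ to identify the double localization with $\Top_{w.e.}$. The only remark is that your discussion of the ``main obstacle'' (checking $\Theta(T)\subseteq S$) is more elaborate than necessary: the paper takes $T=\iota(S)$, the image of weak equivalences of spaces, and since $\Theta$ is arranged to be the identity on $\Top$ (as in the proof of Lemma~\ref{L:adjoint}), the inclusion $\Theta(T)\subseteq S$ is immediate without any span or two-out-of-three argument.
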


\begin{proof}
   Consider the adjunction $\iota \: R^{-1}\Top \rightleftharpoons
  R^{-1}[\TopSt] :\!\Theta$ of Proposition \ref{P:univwe}. Let $S
  \subset R^{-1}\Top$ be the class of weak equivalences and set
  $T=\iota(R)$. It is easy to see that the conditions of Lemma
  \ref{L:adjunction} are satisfied. This gives us the desired
  adjunction $\iota \: \Top_{w.e.} \rightleftharpoons
  S_{w.e.}^{-1}[\TopSt] :\!\Theta$. (By abuse of notation, we have
  denoted the induced functors on localized categories again by
  $\iota$ and $\Theta$.)
\end{proof}

The functor $\Theta \: S_{w.e.}^{-1}[\TopSt] \to \Top_{w.e.}$ should
be thought of as a functor that associates to every topological
stack its {\bf weak homotopy type}.

We say that a morphism $f \: \X \to \Y$ of topological stacks is a
{\bf weak homotopy equivalence}, if $\Theta(f)$ is so. Let
$\TopSt_{w.e.}$ be the localization of the category $[\TopSt]$ of
topological stacks with respect to weak equivalences.

\begin{cor}
The functors $\iota$ and $\Theta$ of Theorem \ref{T:whtpytype}
induce an equivalence of categories
      $$\Top_{w.e.}\cong\TopSt_{w.e.}.$$
In fact, the category on the right can be obtained by inverting
$S_{w.e.}$ and all locally shrinkable morphisms of topological
stacks.
\end{cor}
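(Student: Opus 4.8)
The plan is to prove this as the weak-homotopy analogue of Corollary \ref{C:eq}: I would further localize the adjunction $\iota \dashv \Theta$ of Theorem \ref{T:whtpytype} and then check that, after this localization, both the unit and the counit become isomorphisms. Write $R\subset\Top$ for the class of locally shrinkable maps, $S_{w.e.}$ for the weak equivalences of spaces, and let $W$ denote the class of weak equivalences of topological stacks, so that by definition $\TopSt_{w.e.}=W^{-1}[\TopSt]$. The three relations I need among these classes are $S_{w.e.}\subseteq W$, $R\subseteq W$, and the tautology $\Theta(W)\subseteq S_{w.e.}$. The last is immediate from the definition of $W$ (a stack map $f$ lies in $W$ precisely when $\Theta(f)$ is a weak equivalence). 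For $S_{w.e.}\subseteq W$, note that $\Theta$ restricts to the identity on $\Top$, so $\Theta$ of a weak equivalence of spaces is again a weak equivalence.

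The one genuinely technical point is $R\subseteq W$, i.e.\ that a locally shrinkable morphism $f\:\X\to\Y$ of stacks is a weak equivalence. Here I would invoke the counit naturality square
$$\varphi_{\Y}\circ\Theta(f)=f\circ\varphi_{\X},$$
which holds in $R^{-1}[\TopSt]$. Running the span construction of Lemma \ref{L:adjoint} that defines $\Theta(f)$, the map $g\:z\to\Theta(\Y)$ appearing there is a base extension of $f\circ\varphi_{\X}$; since $f$, $\varphi_{\X}$ and $\varphi_{\Y}$ are all locally shrinkable, hence universal weak equivalences, Lemma \ref{L:epibasechange0} together with Lemma \ref{L:shrinkable} shows $g$ is a universal weak equivalence, so $\Theta(f)$ becomes an isomorphism in $\Top_{w.e.}$. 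Thus $\Theta(f)$ is a weak equivalence and $f\in W$.

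With these relations in hand, I would apply Lemma \ref{L:adjunction} to the adjunction $\iota\:R^{-1}\Top\rightleftharpoons R^{-1}[\TopSt]:\!\Theta$ of Proposition \ref{P:univwe}, taking $S=S_{w.e.}$ and $T=W$; the required compatibilities $\iota(S_{w.e.})\subseteq W$ and $\Theta(W)\subseteq S_{w.e.}$ are exactly the relations above. Because $R\subseteq S_{w.e.}$ and $R\subseteq W$, inverting $R$ and then $S_{w.e.}$ (resp.\ $W$) agrees with inverting $S_{w.e.}$ (resp.\ $W$) directly, so Lemma \ref{L:adjunction} yields an adjunction $\iota\:\Top_{w.e.}\rightleftharpoons\TopSt_{w.e.}:\!\Theta$ with $\iota$ still fully faithful. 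Fully faithfulness makes the unit $\id\Rightarrow\Theta\iota$ an isomorphism, while the counit at $\X$ is the locally shrinkable map $\varphi_{\X}$, which now lies in $W$ and is therefore inverted in $\TopSt_{w.e.}$; hence the counit is an isomorphism as well. An adjunction whose unit and counit are both isomorphisms is an equivalence, giving $\Top_{w.e.}\cong\TopSt_{w.e.}$.

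Finally, for the ``in fact'' clause I would show directly that $W$ and $S_{w.e.}\cup R$ generate the same localization of $[\TopSt]$. One inclusion is the relations $S_{w.e.}\subseteq W$ and $R\subseteq W$ proved above. For the reverse, if $f\in W$ then in $(S_{w.e.}\cup R)^{-1}[\TopSt]$ the maps $\varphi_{\X},\varphi_{\Y}\in R$ and $\Theta(f)\in S_{w.e.}$ are all inverted, so the counit naturality square forces $f=\varphi_{\Y}\circ\Theta(f)\circ\varphi_{\X}^{-1}$ to be invertible. Hence $W^{-1}[\TopSt]=(S_{w.e.}\cup R)^{-1}[\TopSt]$, which is the asserted description. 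The main obstacle in the whole argument is the verification $R\subseteq W$, since everything else is a formal consequence of Lemma \ref{L:adjunction} and the fully faithfulness of $\iota$.
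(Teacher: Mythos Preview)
Your proof is correct and follows essentially the same route the paper points to: the paper's one-line proof ``Immediate from Theorem \ref{T:whtpytype} (also see Corollary \ref{C:eq})'' is exactly the argument you have unpacked---further localizing the adjunction of Proposition \ref{P:univwe} via Lemma \ref{L:adjunction}, and then observing (as in Corollary \ref{C:eq}) that inverting the unit maps $\varphi_{\X}$ turns the fully faithful $\iota$ into an equivalence. Your explicit verification that locally shrinkable stack morphisms lie in $W$ and your naturality-square argument for the ``in fact'' clause are precisely the details hidden behind the paper's reference to Corollary \ref{C:eq}.
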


\begin{proof}
  Immediate from Theorem \ref{T:whtpytype} (also see Corollary
  \ref{C:eq}).
\end{proof}

\subsection{The homotopy type of a \hpara\ topological
stack}{\label{SS:homotopically}

For the class of {\em \hpara} topological stacks (Definition
\ref{D:paracompact}) we can strengthen Theorem \ref{T:whtpytype} by
showing that every such stack has a natural homotopy type (Theorem
\ref{T:htpytype}).

\begin{defn}{\label{D:paracompact}}
  We say that a topological stack $\X$ is {\bf \hpara}
  if there exists a  parashrinkable map $\varphi \: X \to \X$  
  (Definition \ref{D:retraction}) with
  $X$ a paracompact topological space.
\end{defn}

For instance, if the atlas $\varphi \: X \to \X$  of Theorem
\ref{T:nice} can be chosen so that $X$ is paracompact, then $\X$ is
\hpara.

\begin{prop}{\label{P:paracompact}}
Let $\X$ be the quotient stack of a topological groupoid
$\bbX=[X_1\sst{} X_0]$. In each of the following cases, $B\bbX$ is
paracompact, hence $\X$ is \hpara:

\begin{itemize}
  \item[$\mathbf{1.}$] The spaces $X_1$ and $X_0$ are regular and
  Lindel\"of. (A space $X$ is {\em Lindel\"of} if every open cover
  of $X$ has a countable subcover. A space $X$ is {\em regular}
  if every closed set can be separated from every point by open
  sets.)

  \item[$\mathbf{2.}$] The spaces $X_1$ and $X_0$ are metrizable.

  \item[$\mathbf{3.}$] The space $X_1$ and $X_0$ are paracompact,
  Hausdorff, and they admit a proper surjective map from a metric
  space. 
\end{itemize}

\end{prop}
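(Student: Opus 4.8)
The plan is to reduce each of the three cases to the single statement that the Haefliger--Milnor space $B\bbX$ is paracompact, and then to invoke three standard permanence theorems: a regular Lindel\"of space is paracompact; a metrizable space is paracompact (Stone); and the continuous closed (in particular, proper) image of a paracompact space in a Hausdorff space is paracompact. Throughout I would work with the canonical countable open cover $\{U_i=u_i^{-1}(0,1]\}_{i\in\mathbb{N}}$ of $B\bbX$, which is numerable since the $u_i$ form a partition of unity, and with the skeletal filtration $B\bbX=\bigcup_n B_n\bbX$, where $B_n\bbX$ is the image of those sequences with at most $n+1$ nonzero barycentric coordinates. Each $B_n\bbX$ is closed (its complement is the open set where at least $n+2$ of the continuous functions $u_i$ are positive), and each of its cells is a quotient of $\Delta^n\times\big(X_1\times_{X_0}\cdots\times_{X_0}X_1\big)$, a simplex times an iterated fiber product of the source map $s\:X_1\to X_0$. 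The whole difficulty is concentrated in understanding how the Milnor (initial/strong) topology on $E\bbX$ and $B\bbX$ interacts with this filtration.

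Case 2 (metrizable) is the cornerstone and the cleanest. By construction the topology on $E\bbX$ is the initial topology for the countable family of maps $t_i\:E\bbX\to[0,1]$ and $\alpha_i$, so the assignment recording all $(t_i,\alpha_i)$ embeds $E\bbX$ into the countable product $\prod_i\big([0,1]\times X_1^{+}\big)$, where $X_1^{+}$ is $X_1$ with a disjoint point adjoined to record $t_i=0$. A countable product of metrizable spaces is metrizable and subspaces of metrizable spaces are metrizable, so $E\bbX$ is metrizable. For $B\bbX$ I would write down an explicit Milnor-type metric, of the shape $\sum_i|u_i-u_i'|+\sum_i\min(u_i,u_i')\,d_{X_1}(\cdots)$, made invariant under the torsor relation of Lemma \ref{L:torsor}; verifying that this metric induces exactly the quotient topology is the one routine-but-careful check. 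Then $B\bbX$ is metrizable, hence paracompact.

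For Case 1 (regular Lindel\"of) I would instead show directly that $B\bbX$ is regular and Lindel\"of. Regularity is inherited from $X_0$ and $X_1$ through the construction, the coordinate maps separating points from closed sets. For the Lindel\"of property I would cover $B\bbX$ by the countably many $U_i$ and argue that each $U_i$, via the local section over $U_i$ from the proof of Lemma \ref{L:torsor} and the skeletal filtration, is a countable union of continuous images of spaces $\Delta^n\times\big(X_1\times_{X_0}\cdots\times_{X_0}X_1\big)$; since a countable union of Lindel\"of spaces is Lindel\"of and Lindel\"ofness passes to continuous images and to products with the compact simplices, this reduces Case 1 to the assertion that the finite fiber products of $s$ over $X_0$ are Lindel\"of. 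Finally, Case 3 I would deduce from Case 2: given proper surjections $M_i\to X_i$ from metric spaces, the same sequence construction carried out with $M_1$ in place of $X_1$ produces a metrizable space $M$ (by the Case 2 embedding argument) together with a proper surjection $M\to B\bbX$; since $B\bbX$ is Hausdorff, inherited from $X_0,X_1$, and $M$ is paracompact, the closed image $B\bbX$ is paracompact.

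The main obstacle is Case 1: unlike metrizability, which transfers automatically through the countable-product embedding, the Lindel\"of property is not finitely productive, so showing that the iterated fiber products $X_1\times_{X_0}\cdots\times_{X_0}X_1$ remain Lindel\"of is genuinely delicate and cannot follow from Lindel\"ofness of $X_0,X_1$ alone by a purely formal argument. I expect to have to use the groupoid structure --- in particular the identity section and the continuous inverse, which rule out Sorgenfrey-type pathologies --- together with regularity, to control these fiber products; this is where the real content of the proposition lies. Everything else (closedness of $B_n\bbX$, preservation of regularity and of the Hausdorff property, and properness of $M\to B\bbX$) is a direct verification from the definitions of $\S\ref{SS:classifying}$.
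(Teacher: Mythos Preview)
The paper does not actually prove this proposition: it says ``We will omit the proof here'' and offers only the hint that the key step is the paracompactness of the iterated fiber products $X_n=X_1\times_{X_0}\cdots\times_{X_0}X_1$, each of the three hypotheses being a strengthening of paracompactness ``closed under products.'' Your strategy---to show directly that $B\bbX$ itself is regular Lindel\"of, metrizable, or a proper image of a metrizable space---is somewhat more ambitious than this hint, which stops at the $X_n$ and does not explain the passage to $B\bbX$ either. For Cases~2 and~3 your outline is plausible, though one technical point: your embedding $E\bbX\hookrightarrow\prod_i\big([0,1]\times X_1^{+}\big)$ is not continuous as written, because the map to the $X_1^{+}$ factor sends the set $t_i^{-1}(0)$, which is closed but not open in $E\bbX$, to the isolated added point. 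A metric on $E\bbX$ and on $B\bbX$ can indeed be written down, but it needs more care than your ``routine-but-careful check'' suggests.

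The genuine gap is Case~1, and you have located it correctly: regular Lindel\"of is not productive. But your proposed resolution---that the identity section and continuous inverse of the groupoid ``rule out Sorgenfrey-type pathologies''---is a hope, not an argument. For the one-object groupoid $[G\rightrightarrows *]$ of a topological group $G$ the fiber products are the powers $G^n$, and whether every regular Lindel\"of topological group has Lindel\"of (or even paracompact) square is already a delicate question in set-theoretic topology, certainly not decided by the groupoid axioms. The paper's own hint has exactly the same lacuna, since it effectively asserts that regular Lindel\"of is closed under finite products, which the Sorgenfrey plane refutes. So this is not a failure of your reading; neither you nor the paper closes this gap.
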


The above proposition was also (independently) observed by Johaness
Ebert. We will omit the proof here. The key point is that, under the
given assumptions, the multiple fiber products
$X_n:=X_1\times_{X_0}\times\cdots\times_{\X_0}\times X_1$ are again
paracompact. This is not necessarily true if $X_1$ and $X_0$ are
only assumed to be paracompact, because the products of two
paracompact spaces is not necessarily paracompact. For this reason,
we have to replace the paracompactness requirement on $X_1$ and
$X_0$ by something stronger which is closed under products.

Let $\Para$ be the category of paracompact topological spaces and
$\HPTopSt$ the category of \hpara\ topological stacks. Let $S_{h.e.}
\subset \Para$ be the class of homotopy equivalence. Let
$\Para_{h.e.}=S_{h.e.}^{-1}\Para$ be the category of paracompact
homotopy types.

\begin{rem}{\label{R:he}}
 There is an alternative way of describing the categories $\Para_{h.e.}$ and
 $S_{h.e.}^{-1}[\HPTopSt]$ which is perhaps more natural.
 Let us give this description in the case of
 $S_{h.e.}^{-1}[\HPTopSt]$.

 The objects of $S_{h.e.}^{-1}[\HPTopSt]$ are the same as
 those of $[\HPTopSt]$. The morphisms $\Hom_{S_{h.e.}^{-1}[\HPTopSt]}(\X,\Y)$ are
 obtained from $\Hom_{\HPTopSt}(\X,\Y)$ by passing to a certain
 equivalence relation. If $\X$ is a topological space, this
 relation is just the usual homotopy between maps (defined via cylinders). If $\X$ is
 not an honest topological space, then two morphism $f,g \: \X \to
 \Y$ in $[\HPTopSt]$ get identified in $\Hom_{S_{h.e.}^{-1}[\HPTopSt]}(\X,\Y)$ if
 and only if for every map $h \: T \to \X$ from a topological space $T$,
 the compositions $f\circ h$ and $g\circ h$ are equivalent.

 It is easy to verify that this category satisfies the universal
 property of localization.
\end{rem}

\begin{lem}{\label{L:paracompact}}
  Let $R$ be as in Proposition \ref{P:univwe}. Then, the inclusion
  functor $\Para \to \Top$ induces
  a fully faithful functor $\Para_{h.e.} \to R^{-1}\Top$.
  In the stack version, the functor $S_{h.e.}^{-1}[\HPTopSt] \to R^{-1}[\TopSt]$
  may no longer be fully faithful, but it induces a bijection
    $$\Hom_{S_{h.e.}^{-1}[\HPTopSt]}(T,\Y) \cong \Hom_{R^{-1}[\TopSt]}(T,\Y)$$
  whenever $T$ is a topological space.
\end{lem}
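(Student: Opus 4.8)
The plan is to identify each of the Hom-sets in question with an ordinary set of homotopy classes of maps and to check that the comparison functors realize the evident identifications; the two geometric inputs will be that locally shrinkable maps become shrinkable over a paracompact base (Lemma \ref{L:shrinkable}) and the contractibility of the spaces of lifts along parashrinkable maps (Lemma \ref{L:lift}). For the first assertion I fix paracompact spaces $A$ and $B$. Since $A\times[0,1]$ is again paracompact, $\Para$ has cylinders, so $\Hom_{\Para_{h.e.}}(A,B)=[A,B]$, the set of homotopy classes, the localization of a category of spaces at its homotopy equivalences being the naive homotopy category. I then claim $\Hom_{R^{-1}\Top}(A,B)\cong[A,B]$ as well, using \emph{only} that the source $A$ is paracompact. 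Describing morphisms by the calculus of right fractions ($\S$\ref{A:Calculus}), a morphism is represented by a span $A\xleftarrow{r}Z\xrightarrow{g}B$ with $r$ locally shrinkable; by Lemma \ref{L:shrinkable} the paracompactness of $A$ makes $r$ shrinkable, so $r$ has a section $s$, and since the fiberwise deformation retraction onto one section drags any other section into it, all sections are fiberwise homotopic and $[g\circ s]$ is well defined. I would send the span to $[g\circ s]$ and send a map $f$ to the trivial span $(\id_A,f)$. These are mutually inverse: on $[A,B]$ the composite is visibly the identity, while in the localization $r\circ s=\id_A$ forces $s=r^{-1}$, so $(r,g)$ and $(\id_A,g\circ s)$ represent the same morphism. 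It remains to note that homotopic maps already agree in $R^{-1}\Top$, because the projection $A\times[0,1]\to A$ is shrinkable hence invertible, which identifies the two end-inclusions; this shows both that the functor $\Para_{h.e.}\to R^{-1}\Top$ is well defined and that it is the bijection just described, proving full faithfulness.

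For the stack version I fix a topological space $T$ in $\HPTopSt$ (in the cases of interest $T$ is paracompact) and a \hpara\ stack $\Y$, and I take the classifying space $\Theta(\Y)=B\bbX$ with its locally shrinkable map $\varphi\:B\bbX\to\Y$ as in Proposition \ref{P:univwe}. The adjunction of that proposition gives
$$\Hom_{R^{-1}[\TopSt]}(T,\Y)\cong\Hom_{R^{-1}\Top}(T,\Theta(\Y)),$$
and by the previous paragraph (paracompact source $T$) the right-hand side is $[T,B\bbX]$, the correspondence being composition with $\varphi$. On the other side, Remark \ref{R:he} identifies $\Hom_{S_{h.e.}^{-1}[\HPTopSt]}(T,\Y)$ with the set $[T,\Y]$ of homotopy classes of honest stack morphisms $T\to\Y$. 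Thus it suffices to show that composing with $\varphi$ induces a bijection $[T,B\bbX]\risom[T,\Y]$, and this is exactly Lemma \ref{L:lift}: $\varphi$ is locally shrinkable hence parashrinkable, so every $g\:T\to\Y$ admits a lift (surjectivity), and given $h_0,h_1\:T\to B\bbX$ with $\varphi h_0$ homotopic to $\varphi h_1$ I would lift the homotopy $G\:T\times[0,1]\to\Y$ and compare its ends with $h_0,h_1$ by uniqueness of lifts to obtain $h_0\simeq h_1$ (injectivity); both steps use only that $T$ and $T\times[0,1]$ are paracompact. Tracing the identifications shows this bijection is the one induced by $[\HPTopSt]\to[\TopSt]\to R^{-1}[\TopSt]$.

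I expect the principal difficulty to be bookkeeping rather than conceptual: one must check that the span-to-homotopy-class rule respects the equivalence relation of the calculus of fractions and is natural, so that the abstract inclusion functor really induces the displayed bijection, and one must verify that the two quotients of $\Hom_{[\TopSt]}(T,\Y)$ --- by cylinder homotopy on the left and by the localization relation on the right --- literally coincide rather than being merely abstractly isomorphic. The same analysis explains the failure of full faithfulness for non-space sources: for a general \hpara\ stack $\X$ in the source the cylinder-homotopy relation on $\Hom(\X,\Y)$ is coarser than what inverting $R$ detects, so the two morphism sets need not agree once $\X$ is not an honest topological space, which is why the comparison is asserted only for $T$ a space.
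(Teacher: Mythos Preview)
Your argument is correct in outline, but it takes a different route from the paper's proof, and there is one technical point you gloss over.

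\textbf{Difference in approach.} For the stack statement the paper works \emph{directly} on the set $\Hom_{[\TopSt]}(T,\Y)$: using Lemma \ref{L:zigzag} it identifies $\Hom_{R^{-1}[\TopSt]}(T,\Y)$ with this set modulo $\bar{R}$-homotopy, while Remark \ref{R:he} identifies $\Hom_{S_{h.e.}^{-1}[\HPTopSt]}(T,\Y)$ with the same set modulo ordinary cylinder homotopy; it then shows the two equivalence relations coincide. Your proof instead passes through the adjunction of Proposition \ref{P:univwe}, reducing to the bijection $[T,\Theta(\Y)]\cong[T,\Y]$ of Lemma \ref{L:lift} together with your Part 1. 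This is a legitimate alternative: it has the virtue of reusing the adjunction machinery already built, at the cost of an extra layer of identifications to trace. The paper's route is shorter and makes the equality of Hom sets more transparent (both sides are visibly the same quotient), whereas your route makes the role of the classifying space explicit.

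\textbf{A gap to patch.} In Part 1 you describe spans $A\xleftarrow{r}Z\xrightarrow{g}B$ with $r$ locally shrinkable and invoke Lemma \ref{L:shrinkable} to get a section. But the calculus of right fractions in $\S$\ref{A:Calculus} requires the class to be closed under composition, and it is not clear that locally shrinkable maps are; the paper passes explicitly to $\bar{R}$, the closure of $R$ under composition (same localization). Thus $r$ is in general only a finite composition of locally shrinkable maps, and Lemma \ref{L:shrinkable} does not immediately give shrinkability of $r$. You should argue instead by iterating Lemma \ref{L:lift}: each factor is parashrinkable, $T$ is paracompact, so lifts exist and are unique up to homotopy through each stage; this yields the section of $r$ and the well-definedness of $[g\circ s]$. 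The paper handles exactly this point when it says ``since $r$ is a composition of locally shrinkable morphisms and $T$ is paracompact, it follows from Lemma \ref{L:lift}\dots''. Once you make this adjustment your argument goes through.
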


\begin{proof}
   We only prove the statement for $[\HPTopSt]$. The case of $\Para$
   is proved similarly.

   Before proving the bijectivity, let us explain why
   we have an induced functor
     $$S_{h.e.}^{-1}[\HPTopSt] \to R^{-1}[\TopSt]$$
   in the first place. By the discussion of Remark \ref{R:he}, morphisms in
   $S_{h.e.}^{-1}[\HPTopSt]$ are obtained from those of $[\HPTopSt]$ by passing to
   a certain equivalence relation. It is easy to check that the
   localization functor $[\HPTopSt] \to R^{-1}[\HPTopSt]$ sends an
   entire   equivalence class of morphisms to one morphism. (This
   follows from the fact that the projection map
   $X\times[0,1] \to X$ of a cylinder is in $R$.) Therefore, we have
   a functor $S_{h.e.}^{-1}[\HPTopSt] \to R^{-1}[\TopSt]$.

   Now, let $T$ be a paracompact topological space and $\Y$ a
   \hpara\ topological stack. We want to show that
    $$\gamma \: \Hom_{S_{h.e.}^{-1}[\HPTopSt]}(T,\Y)
                         \to \Hom_{R^{-1}[\TopSt]}(T,\Y)$$
   is a bijection.

   Let $\bar{R}$ be the class of morphisms in $\TopSt$ which are compositions of
   finitely many locally shrinkable morphisms. By
   $\S$\ref{A:Calculus}, morphisms in
   $R^{-1}[\TopSt]=\bar{R}^{-1}[\TopSt]$ can be calculated using a
   calculus of right fractions. By Lemma \ref{L:zigzag},
     $$\Hom_{R^{-1}[\TopSt]}(T,\Y)=\Hom_{[\TopSt]}(T,\Y)/_{\sim},$$
   where $\sim$ is the equivalence relation generated by $\bar{R}$
   -homotopy ($\S$\ref{A:Calculus}). On the other hand
     $$\Hom_{S_{h.e.}^{-1}[\HPTopSt]}(T,\Y)=\Hom_{[\TopSt]}(T,\Y)/_{\sim'},$$
   where $\sim'$ is the usual homotopy (Remark \ref{R:he}). To
   complete the proof, we show that $\sim$ and $\sim'$ are the same.
   That is, two honest morphisms $f,f' \: T \to \Y$ are homotopic if
   and only if they are $\bar{R}$-homotopic. First assume that $f$
   and $f'$ are $\bar{R}$-homotopic and consider an
   $\bar{R}$-homotopy
   ($\S$\ref{A:Calculus}) between them, as in the diagram
          $$\xymatrix@=45pt@M=4pt{
           T \ar@/^/ [r]^{t} \ar@/^2pc/[rr]_{f}
             \ar@/_/ [r]_{t'} \ar@/_2pc/[rr]^{f'}
            &  V \ar[l] |-{r} \ar[r]^g & \Y.   }$$
  Since $r$ is a composition of locally shrinkable morphisms and $T$
  is paracompact, it follows from Lemma \ref{L:lift}, together with
  the fact that $T\times [0,1]$ is paracompact, that $t$ and $t'$
  are homotopic. This implies that $f$ and $f'$ are also homotopic.
  Conversely, assume $f$ and $f'$ are homotopic. Then, we can form
  an $\bar{R}$-homotopy diagram between them by taking $V=T\times
  [0,1]$.  The proof is complete.
\end{proof}

The following theorem says that every \hpara\ topological stack has
a natural homotopy type.

\begin{thm}{\label{T:htpytype}}
 The inclusion functor $\Para \to \HPTopSt$ induces a fully
 faithful functor $\iota \: \Para_{h.e.} \to S_{h.e.}^{-1}[\HPTopSt]$,
 and $\iota$ has a right adjoint $\Theta$. Furthermore, the right adjoint
 $\Theta$ can be defined so that the counits of adjunction are the
 identity maps and the units of adjunction are honest morphisms of
 topological stacks which are locally shrinkable.
\end{thm}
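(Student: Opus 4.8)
The plan is to follow the proof of Theorem~\ref{T:whtpytype} almost verbatim, replacing weak equivalences by homotopy equivalences and restricting from all spaces and stacks to paracompact spaces and $\hpara$ stacks. I would start from the adjunction $\iota\colon R^{-1}\Top \rightleftharpoons R^{-1}[\TopSt]\colon\Theta$ of Proposition~\ref{P:univwe}, where $R$ is the class of locally shrinkable maps. By Lemma~\ref{L:paracompact} the category $\Para_{h.e.}$ sits inside $R^{-1}\Top$ as the full subcategory on paracompact spaces, and the natural functor $j\colon S_{h.e.}^{-1}[\HPTopSt]\to R^{-1}[\TopSt]$ lands in the full subcategory $\mathcal{B}$ on $\hpara$ stacks. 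The first move is to restrict the Proposition~\ref{P:univwe} adjunction via Lemma~\ref{L:sub} to $\Para_{h.e.}$ and $\mathcal{B}$: this is legitimate because $\iota$ sends a paracompact space to an $\hpara$ stack and (the key input, discussed below) $\Theta$ sends an $\hpara$ stack to a paracompact space.

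Granting that restriction, I would define $\Theta^{hp}\colon S_{h.e.}^{-1}[\HPTopSt]\to\Para_{h.e.}$ as the composite of $j$ with the restricted $\Theta$, and $\iota^{hp}$ as the functor induced by the inclusion $\Para\to\HPTopSt$. The adjunction $\iota^{hp}\dashv\Theta^{hp}$ is then checked on Hom-sets: for a paracompact space $T$ and an $\hpara$ stack $\Y$, the chain
$$\Hom_{S_{h.e.}^{-1}[\HPTopSt]}(T,\Y)\cong\Hom_{R^{-1}[\TopSt]}(T,\Y)\cong\Hom_{R^{-1}\Top}(T,\Theta(\Y))\cong\Hom_{\Para_{h.e.}}(T,\Theta^{hp}(\Y))$$
uses, in order, the bijection of Lemma~\ref{L:paracompact}, the adjunction of Proposition~\ref{P:univwe}, and the full faithfulness of $\Para_{h.e.}\to R^{-1}\Top$ together with paracompactness of $\Theta^{hp}(\Y)$ (which equals $\Theta(\Y)$ as a space). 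The crucial observation is that every term is a Hom \emph{out of a space} $T$, which is exactly the range in which Lemma~\ref{L:paracompact} asserts $j$ to be bijective; so the failure of $j$ to be fully faithful is irrelevant. Naturality in $T$ and $\Y$ is routine, fully faithfulness of $\iota^{hp}$ follows because the unit $\id\Rightarrow\Theta^{hp}\iota^{hp}$ is the identity once $\Theta^{hp}$ is normalized to fix paracompact spaces, and the statements about units and counits follow as in Proposition~\ref{P:univwe}, granting the paracompactness input below.

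The main obstacle is that single input: that the classifying space of an $\hpara$ stack $\Y$ may be taken to be a paracompact space carrying a locally shrinkable atlas. The subtlety is that Definition~\ref{D:paracompact} supplies only a \emph{parashrinkable} atlas $\psi\colon Z\to\Y$ with $Z$ paracompact, and parashrinkability is strictly weaker than local shrinkability, so $Z$ need not be $R^{-1}\Top$-isomorphic to the fixed classifying space $B\bbX=\Theta(\Y)$ of Proposition~\ref{P:univwe}. I would attempt to compare the two atlases through $P:=Z\times_{\Y}B\bbX$: the projection $P\to Z$ is the base extension of the locally shrinkable $B\bbX\to\Y$, hence locally shrinkable, so $P\cong Z$ in $R^{-1}\Top$, whereas $P\to B\bbX$ is only parashrinkable. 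Thus the clean argument goes through verbatim precisely for those $\hpara$ stacks admitting a locally shrinkable atlas over a paracompact space (in particular in all the cases of Proposition~\ref{P:paracompact}, where $B\bbX$ itself is paracompact), and I expect the real content to be either establishing this upgrade in general or, alternatively, bypassing $\Theta$ altogether and verifying the corepresenting bijection $\Hom_{S_{h.e.}^{-1}[\HPTopSt]}(T,\Y)\cong\Hom_{\Para_{h.e.}}(T,Z)$ directly via Lemma~\ref{L:lift} (lifting maps from the paracompact spaces $T$ and $T\times[0,1]$ through the parashrinkable $\psi$) and Remark~\ref{R:he}.
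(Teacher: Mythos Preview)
Your approach is exactly the paper's: start from the adjunction of Proposition~\ref{P:univwe}, then restrict to paracompact spaces and \hpara\ stacks via Lemmas~\ref{L:sub} and~\ref{L:paracompact}. You have correctly isolated the one nontrivial step, namely arranging that $\Theta(\X)$ be paracompact when $\X$ is \hpara. The paper's own proof disposes of this step with a single sentence---``We can arrange so that for every \hpara\ $\X$, $\Theta(\X)$ is paracompact''---and offers no further justification. Your worry that Definition~\ref{D:paracompact} supplies only a \emph{parashrinkable} atlas $\psi\colon Z\to\X$ from a paracompact $Z$, whereas the $\Theta$ of Proposition~\ref{P:univwe} is built from \emph{locally shrinkable} atlases, is precisely the point the paper glosses over; your fiber-product analysis $P=Z\times_{\X}B\bbX$ is the natural attempt and correctly shows that neither projection immediately yields a locally shrinkable atlas from a paracompact space.

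So you have not missed anything the paper provides: you have reproduced the paper's argument and flagged the same soft spot that the paper leaves unexplained. Your proposed repair---verifying the corepresenting bijection $\Hom_{S_{h.e.}^{-1}[\HPTopSt]}(T,\X)\cong\Hom_{\Para_{h.e.}}(T,Z)$ directly from Lemma~\ref{L:lift} (lifting maps and homotopies from the paracompact spaces $T$ and $T\times[0,1]$ through the parashrinkable $\psi$) together with the description of morphisms in Remark~\ref{R:he}---is sound and is in fact the mechanism already underlying the proof of Lemma~\ref{L:paracompact}. Note that this route naturally produces \emph{parashrinkable} units rather than locally shrinkable ones, which is consistent with how the paper phrases the analogous statement in Theorem~\ref{T:hweakly}.
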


\begin{proof}
    Consider the adjunction $\iota \: R^{-1}\Top \rightleftharpoons
    R^{-1}[\TopSt] :\!\Theta$ of Proposition \ref{P:univwe}. We can
    arrange so that for every \hpara\ $\X$,
    $\Theta(\X)$ is paracompact.

    By Lemmas \ref{L:sub} and \ref{L:paracompact} , if  in both sides
    of the adjunction we restrict to the paracompact objects, we obtain the
    adjunction
         $$\iota \: \Para_{h.e.} \rightleftharpoons
    S_{h.e.}^{-1}[\HPTopSt] :\!\Theta,$$
    which is what we were after.
\end{proof}

The functor $\Theta \: S_{h.e.}^{-1}[\HPTopSt] \to \Para_{h.e.}$
should be thought of as a functor that associates to every \hpara\
topological stack its {\bf homotopy type}.

We say that a morphism $f \: \X \to \Y$ of \hpara\ topological
stacks is a {\bf  homotopy equivalence}, if $\Theta(f)$ is so. Let
$\HPTopSt_{h.e.}$ be the localization of the category of
hoparacompact stacks with respect to homotopy equivalences. We have
the following.

 \begin{cor}
  The functors $\iota$ and $\Theta$ of Theorem \ref{T:htpytype} induce
  an equivalence of categories
    $$\Para_{h.e.}\cong\HPTopSt_{h.e.}.$$
  In fact, the category on the right can be obtained by inverting $S_{h.e.}$
  and all locally shrinkable morphisms of hoparacompact topological stacks.
\end{cor}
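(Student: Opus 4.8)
The plan is to transplant the derivation of Corollary \ref{C:eq} from Lemma \ref{L:adjoint} into the hoparacompact, genuine-homotopy setting, feeding it the adjunction already produced by Theorem \ref{T:htpytype}. That theorem supplies a fully faithful $\iota \: \Para_{h.e.} \to S_{h.e.}^{-1}[\HPTopSt]$ with right adjoint $\Theta$, together with the natural comparison transformation whose components $\varphi_\X \: \Theta(\X) \to \X$ are honest locally shrinkable morphisms (and are isomorphisms when $\X$ is a space, where $\Theta$ is the identity). Full faithfulness of $\iota$ already makes $\iota$ and $\Theta$ mutually quasi-inverse on the full subcategory of objects isomorphic to some $\iota(T)$; the only thing keeping this from being an equivalence of the whole category is that a general $\X$ need not yet be isomorphic to $\Theta(\X)$, and this defect is repaired precisely by inverting the $\varphi_\X$.

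First I would form $W^{-1}[\HPTopSt]$, where $W=S_{h.e.}\cup L$ and $L$ is the class of all locally shrinkable morphisms of hoparacompact stacks, and push the adjunction of Theorem \ref{T:htpytype} across this extra localization by means of Lemma \ref{L:adjunction}. Taking on the left the class of isomorphisms (so that $\Para_{h.e.}$ is unchanged) and on the right the class $L$, the two hypotheses of that lemma reduce to the single nontrivial statement that $\Theta$ carries every locally shrinkable morphism to an isomorphism of $\Para_{h.e.}$. Granting this, Lemma \ref{L:adjunction} yields an adjunction $\iota \: \Para_{h.e.} \rightleftharpoons W^{-1}[\HPTopSt] \: \Theta$ with $\iota$ still fully faithful; and since each $\varphi_\X\in L$ is now invertible, the canonical transformation $\iota\Theta \Rightarrow \id$ becomes an isomorphism, so $\iota$ and $\Theta$ are an adjoint equivalence $\Para_{h.e.}\cong W^{-1}[\HPTopSt]$.

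It then remains to identify $W^{-1}[\HPTopSt]$ with $\HPTopSt_{h.e.}$, i.e.\ to check that inverting $W$ inverts exactly the homotopy equivalences of stacks. One inclusion is direct: homotopy equivalences of spaces and locally shrinkable morphisms are all homotopy equivalences of stacks (indeed $\Theta$ sends them to isomorphisms of $\Para_{h.e.}$), so $W$ consists of stack homotopy equivalences and there is a comparison functor $W^{-1}[\HPTopSt]\to\HPTopSt_{h.e.}$. For the reverse inclusion I would use the equivalence just established: as $\Theta \: W^{-1}[\HPTopSt]\risom\Para_{h.e.}$ reflects isomorphisms and computes the very invariant defining stack homotopy equivalences, a morphism $f$ is already invertible in $W^{-1}[\HPTopSt]$ as soon as $\Theta(f)$ is an isomorphism of $\Para_{h.e.}$, that is, as soon as $f$ is a homotopy equivalence of stacks. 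Hence the two localizations coincide.

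The step I expect to be the main obstacle is the claim used above that $\Theta$ sends every locally shrinkable morphism to an isomorphism of $\Para_{h.e.}$; this is where the passage from weak to genuine homotopy, and hence hoparacompactness, really enters. In the weak setting of Theorem \ref{T:whtpytype} it is automatic, since locally shrinkable maps are universal weak equivalences. Here I would argue instead that, by the construction of $\Theta$ in Lemma \ref{L:adjoint}, $\Theta(f)$ is represented by a span $\Theta(\Y)\xleftarrow{r}z\xrightarrow{g_1}\Theta(\X)$ whose two legs are base extensions of (composites of) locally shrinkable maps to the paracompact spaces $\Theta(\Y)$ and $\Theta(\X)$; by Lemma \ref{L:shrinkable} together with the definition of parashrinkability, such base extensions over paracompact spaces are shrinkable, and a shrinkable map is an honest homotopy equivalence (its section is a fiberwise deformation retract, so it admits a two-sided homotopy inverse). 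Thus both legs of the span are homotopy equivalences and $\Theta(f)$ is invertible in $\Para_{h.e.}$. The only care needed is to confirm, via Lemma \ref{L:epibasechange0} and the fact that shrinkable maps compose, that the relevant composites of locally shrinkable maps remain parashrinkable, so that the shrinkability conclusion applies over the paracompact bases in question.
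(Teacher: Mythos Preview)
Your strategy is the paper's: its proof reads ``Immediate from Theorem \ref{T:htpytype} (also see Corollary \ref{C:eq}),'' and your write-up is precisely an unpacking of how the abstract Corollary \ref{C:eq} transports across the adjunction of Theorem \ref{T:htpytype}. You also correctly isolate the one nontrivial verification, that $\Theta$ sends every locally shrinkable $f\:\X\to\Y$ to an isomorphism of $\Para_{h.e.}$.

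There is, however, a gap in how you dispatch that verification. You propose to show the leg $g\:z\to\Theta(\Y)$ of the span is shrinkable by recognizing it as the base extension of the composite $f\circ\varphi_\X$ to the paracompact space $\Theta(\Y)$, appealing to the claim that composites of locally shrinkable maps are parashrinkable. That claim is not justified by the ingredients you cite: pulling such a composite back to a paracompact $T$ factors through an intermediate space $T\times_\Y\X$ which need not be paracompact, so the implication ``locally shrinkable $\Rightarrow$ shrinkable over a paracompact base'' of Lemma \ref{L:shrinkable} cannot be applied at the second stage, and neither Lemma \ref{L:epibasechange0} nor the (true) fact that shrinkable maps compose closes the gap.

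The clean fix---and the content behind the paper's pointer to Corollary \ref{C:eq}---is to argue in $R^{-1}\Top$ instead. Factor $g$ as $z\to w\to\Theta(\Y)$ with $w=\X\times_\Y\Theta(\Y)$: the map $w\to\Theta(\Y)$ is the base extension of $f\in\tilde R$ to the space $\Theta(\Y)$, hence lies in $R$; the map $z\to w$ is the base extension of $\varphi_\X\in\tilde R$ to the space $w$, hence again lies in $R$. Thus $g$, like $r$, is a composite of morphisms in $R$ and is therefore invertible in $R^{-1}\Top$, making $\Theta(f)$ an isomorphism there. Now Lemma \ref{L:paracompact} says $\Para_{h.e.}\hookrightarrow R^{-1}\Top$ is fully faithful, so $\Theta(f)$ is already an isomorphism in $\Para_{h.e.}$---no paracompactness of $z$ or $w$ required.
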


\begin{proof}
  Immediate from Theorem \ref{T:htpytype} (also see Corollary
  \ref{C:eq}).
\end{proof}

\section{Homotopical stacks}{\label{S:Homotopical}}

In the previous sections, we proved the existence of a functorial
classifying for an arbitrary topological stack. Although this may
seem to be quite general, there are still some important classes of
stacks to which this may not apply. The first example that
comes to mind is the mapping stack $\Map(\Y,\X)$ of two topological
stacks. It can be shown (see \cite{Mapping}) that if $\Y=[Y_0/Y_1]$, for a
topological groupoid $[Y_1\sst{}Y_0]$ with $Y_0$ and $Y_1$ compact,
then the mapping stack $\Map(\Y,\X)$ is again topological. The
compactness condition on $\Y$, however, is quite restrictive and it
seems that without it $\Map(\Y,\X)$ may not be topological in
general (but we do not have a counter example).

Nevertheless, we prove in \cite{Mapping} that when $Y_0$ and $Y_1$
are locally compact and $\X$ is arbitrary, $\Map(\Y,\X)$ is not far
from being topological. More precisely, it is {\em paratopological}
in the sense of Definition \ref{D:paratop} below. Every topological
stack is, by definition, paratopological.

In this section, we show that our construction of the homotopy types
for topological stacks can be extended to a larger class of stacks
called {\em homotopical stacks} (Definition \ref{D:paratop}).
Homotopical stacks include all paratopological stacks (hence, in
particular, all topological stacks).

\begin{defn}{\label{D:paratop}}
 We say that a stack $\X$ is {\bf paratopological} if it 
 satisfies the following conditions:
 \begin{itemize}
   \item[$\mathbf{A1.}$] Every map $T \to \X$ from a topological
     space $T$ is representable (equivalently, the diagonal
     $\X \to \X\times\X$ is representable);

   \item[$\mathbf{A2.}$] There exists a
      morphism $X \to \X$ from a topological space $X$ such that for
      every morphism $T \to \X$, with $T$ a paracompact topological
      space, the base extension $T\times_{\X} X
       \to T$ is an epimorphism of topological spaces (i.e., admits local sections)
 \end{itemize}
   If ($\mathbf{A2}$) is only satisfied with  $T$ a CW complex, then
   we say that $\X$ is {\bf pseudotopological}. If in
   ($\mathbf{A2}$) we require that the base extensions to be
   weak equivalences, we say that $\X$ is {\bf homotopical}.\footnote{
   Remark that if the latter condition is satisfied for all CW
   complexes $T$ then it is satisfied for all topological spaces $T$.} In this case, 
   we say   that $X$ is a {\bf classifying space} for $\X$. 
\end{defn}

  Roughly speaking, a stack $\X$ being paratopological means that,
  in the eye of a paracompact topological space $T$, $\X$ is as good
  as a topological stack (Lemmas \ref{L:A2} and \ref{L:paratop}).
  Thus, it is not surprising that the
  homotopy theory of topological stacks can be extended to
  paratopological (or even pseudotopological) stacks.

 Paratopological stacks form a full sub 2-category of the 2-category of 
 stacks which we denote by $\ParSt$. The 2-categories $\HoSt$ and $\PsSt$
 of homotopical stacks and pseudotopological stack are defined similarly.

\begin{lem}{\label{L:A2}}
 Let $\X$ be a stack over $\Top$ such that the diagonal $\X \to
 \X\times\X$ is representable. Then, $\X$ is paratopological
 (respectively, pseudotopological) if and only if there exists a
 topological stack $\bar{\X}$ and a morphism $p \: \bar{\X} \to \X$
 such that for every paracompact topological space $T$
 (respectively, every CW complex $T$), $p$ induces an equivalence of
 groupoids  $\bar{\X}(T) \to \X(T)$.
\end{lem}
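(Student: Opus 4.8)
The plan is to prove the two implications separately, observing first that the standing hypothesis that the diagonal $\X \to \X\times\X$ is representable is precisely condition $(\mathbf{A1})$ of Definition \ref{D:paratop}; so in each direction only condition $(\mathbf{A2})$ is at stake. I will write the argument for the paratopological/paracompact case, since the pseudotopological/CW case is word-for-word identical with ``paracompact space'' replaced by ``CW complex'' throughout.

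For the ``if'' direction, suppose such a $\bar{\X}$ and $p \: \bar{\X} \to \X$ are given. As $\bar{\X}$ is a topological stack it admits an atlas $a \: X \to \bar{\X}$ from a topological space, and $a$ admits local sections. I claim the composite $\varphi = p\circ a \: X \to \X$ witnesses $(\mathbf{A2})$. Given any $g \: T \to \X$ with $T$ paracompact, essential surjectivity of the equivalence $\bar{\X}(T) \to \X(T)$ lifts $g$, up to $2$-isomorphism, to a morphism $\bar{t} \: T \to \bar{\X}$. Because $a$ is an atlas and epimorphisms are stable under base extension, $T\times_{\bar{\X}}X \to T$ admits local sections; composing these with the canonical morphism $T\times_{\bar{\X}}X \to T\times_{\X}X$ over $T$ yields local sections of $T\times_{\X}X \to T$. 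Hence $(\mathbf{A2})$ holds, and together with $(\mathbf{A1})$ this shows $\X$ is paratopological.

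For the ``only if'' direction, assume $\X$ is paratopological and choose $\varphi \: X \to \X$ as in $(\mathbf{A2})$. By $(\mathbf{A1})$ this morphism is representable, so $R := X\times_{\X}X$ is a topological space, $\bbX = [R \sst{} X]$ is a topological groupoid, and $\bar{\X} := [X/R]$ is a topological stack. The chart $\varphi$ together with the canonical $2$-isomorphism between the two composites $R \sst{} X \llra{\varphi} \X$ (coming from $R = X\times_\X X$) is descent data defining the required morphism $p \: \bar{\X} \to \X$, and by Lemma \ref{L:cocycle} I may identify $\bar{\X}(T)$ with the groupoid of $\bbX$-torsors over $T$. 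It remains to show that $p_T \: \bar{\X}(T) \to \X(T)$ is an equivalence for every paracompact $T$. For essential surjectivity, given $g \: T \to \X$ I form $P := T\times_{\X}X$; by $(\mathbf{A2})$ the projection $P \to T$ admits local sections, and the projection $P \to X$ together with the map $P\times_T P \to X\times_\X X = R$ (built from the canonical $2$-isomorphisms present in the fiber product) makes $P \to T$ an $\bbX$-torsor whose image under $p$ is $g$.

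The main obstacle is full faithfulness of $p_T$, and the key observation is that it holds for \emph{all} test spaces $T$ --- hence in particular for paracompact (resp.\ CW) ones --- precisely because $R$ was defined as $X\times_\X X$. Concretely, full faithfulness of $\bar{\X}(T)\to\X(T)$ for every $T$ is equivalent to $p$ being a monomorphism, i.e.\ to the diagonal $\bar{\X} \to \bar{\X}\times_\X\bar{\X}$ being an equivalence. I would verify this after base extension along the atlas $X \to \bar{\X}$: since $\bar{\X} = [X/R]$ we have $X\times_{\bar{\X}}X = R$, while by construction $X\times_\X X = R$, and the comparison morphism $X\times_{\bar{\X}}X \to X\times_\X X$ induced by $p$ is the identity on $R$. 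By descent along the epimorphism $X \to \bar{\X}$ this identification forces $p$ to be a monomorphism, which gives full faithfulness and completes the proof. The one point demanding care is checking that this chart-level identity genuinely descends to the monomorphism property of $p$; everything else is a routine unwinding of the torsor description of Lemma \ref{L:cocycle}.
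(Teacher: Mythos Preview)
Your argument is correct and follows exactly the same construction as the paper: for the ``if'' direction you compose $p$ with an atlas of $\bar{\X}$, and for the ``only if'' direction you set $R=X\times_{\X}X$ and $\bar{\X}=[X/R]$. The paper's proof simply asserts that this $\bar{\X}$ ``has the desired property'' without further justification, whereas you have (correctly) spelled out essential surjectivity and the monomorphism/full-faithfulness argument via descent; the point you flag as delicate does indeed go through, since pulling $\Delta_p$ back along the epimorphism $X\times_{\X}X\to\bar{\X}\times_{\X}\bar{\X}$ yields the identity on $R$.
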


\begin{proof}
 We only prove the statement for paratopological stacks. The case of
 pseudotopological stacks is similar.

 Suppose that such a map $\bar{\X} \to \X$ exists. Take an atlas $X \to
 \bar{\X}$ for $\bar{\X}$. It is clear that the composite map $X \to
 \X$ satisfies ($\mathbf{A2}$) of Definition \ref{D:paratop}.

 Conversely, assume $\X$ is paratopological and pick a map  $X \to
 \X$ as in Definition \ref{D:paratop}, ($\mathbf{A2}$). Set $X_0:=X$
 and $X_1:=X\times_{\X }X$. It is easy to see that the quotient
 stack $\bar{\X}:=[X_0/X_1]$ of the topological groupoid
 $[X_1\sst{}X_0]$ has the desired property.
\end{proof}

\begin{lem}{\label{L:paratop}}
  Let $\X$ be a paratopological (resp., pseudotopological)
  stack. Let $p \: \bar{\X} \to \X$ be as in Lemma \ref{L:A2}.
  Then, for any map $T \to \X$, with $T$ a
  paracompact topological space (resp., a CW complex), the
  base extension  $p_T \: T\times_{\X}\bar{\X} \to T$ is a homeomorphism.
\end{lem}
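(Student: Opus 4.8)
The plan is to prove Lemma \ref{L:paratop} by reducing it to the defining property of the map $p \: \bar{\X} \to \X$ furnished by Lemma \ref{L:A2}, namely that $p$ induces an equivalence of groupoids $\bar{\X}(T') \to \X(T')$ for every paracompact $T'$ (resp.\ every CW complex $T'$). First I would unwind what it means for the base extension $p_T \: T\times_{\X}\bar{\X} \to T$ to be a homeomorphism. Since $\X$ is paratopological (resp.\ pseudotopological), condition ($\mathbf{A1}$) guarantees that $T \to \X$ is representable, so the fiber product $T\times_{\X}\bar{\X}$ is an honest topological space and $p_T$ is an honest continuous map of spaces. The claim is that this map is in fact a homeomorphism, i.e.\ that $p_T$ is a bijection with continuous inverse, which amounts to producing a section that is also a two-sided inverse.

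The key step is to exhibit the inverse via the Yoneda/equivalence input. A section of $p_T$ is precisely a lift of the identity $\id_T$ through $p_T$, equivalently an object of the groupoid $(T\times_{\X}\bar{\X})(T)$ lying over the tautological point; but by the universal property of the fiber product, giving such a lift is the same as giving a morphism $T \to \bar{\X}$ together with a $2$-isomorphism making the square commute, i.e.\ an object of $\bar{\X}(T)$ mapping to the given object of $\X(T)$. Since $T$ is paracompact (resp.\ a CW complex), the functor $\bar{\X}(T) \to \X(T)$ is an equivalence of groupoids, so the tautological object of $\X(T)$ (coming from $T \to \X$) lifts, uniquely up to unique $2$-isomorphism, to an object of $\bar{\X}(T)$. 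This produces a canonical section $s \: T \to T\times_{\X}\bar{\X}$ of $p_T$. The same equivalence, applied now not just to $T$ but functorially (or applied to the space $T\times_{\X}\bar{\X}$ itself, which I must check is again paracompact resp.\ CW — this is where some care is needed), forces $s \circ p_T = \id$, giving that $p_T$ is a homeomorphism.

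More precisely, I would argue the inverse is a homeomorphism by a pointwise-plus-continuity argument rather than re-invoking the equivalence on a possibly-bad total space: since $p_T$ admits local sections (it is an epimorphism of spaces by ($\mathbf{A2}$), as $p_T$ is a base extension of $p$) and the equivalence of groupoids over $T$ says the fiber of $p_T$ over each point is a single point (the groupoid $\bar{\X}(T) \to \X(T)$ being an equivalence makes $p_T$ both essentially surjective and fully faithful in the appropriate representable sense), the map $p_T$ is a continuous bijection admitting local continuous sections; a continuous bijection with local sections is automatically a homeomorphism. The constructed section $s$ is then the global continuous inverse.

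The main obstacle I anticipate is the careful bookkeeping of the $2$-categorical data: one must check that "equivalence of groupoids $\bar{\X}(T) \to \X(T)$" translates correctly into the statement that the representable map $p_T$ is a bijection on points with continuous local inverses, keeping track of the $2$-isomorphisms so that the lift is genuinely unique (not merely unique up to homotopy). The delicate point is that surjectivity and injectivity of $p_T$ on the level of \emph{spaces} must be extracted from essential surjectivity and full faithfulness of the functor on the level of \emph{groupoids}; this uses that $T$ has no automorphisms as an object mapping to itself, so the groupoid equivalence collapses to a genuine bijection of lifts. Once this dictionary is in place the continuity of the inverse is formal, coming from the local sections guaranteed by ($\mathbf{A2}$), and the proof concludes.
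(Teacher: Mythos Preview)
Your argument has a genuine gap right at the start. You write that ``condition ($\mathbf{A1}$) guarantees that $T \to \X$ is representable, so the fiber product $T\times_{\X}\bar{\X}$ is an honest topological space.'' But representability of $T \to \X$ only says that base-changing along a map from a \emph{space} yields a space; here you are base-changing along a map from the \emph{stack} $\bar{\X}$. What you can conclude is that $\Y := T\times_{\X}\bar{\X}$ is a topological stack (the projection $\Y \to \bar{\X}$ is representable and $\bar{\X}$ is topological), not that it is a space. Establishing that $\Y$ is a space is precisely the content of the lemma, so you have assumed what you set out to prove. Everything downstream---talking about $p_T$ as a ``continuous bijection with local sections''---presupposes this.

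The paper's proof confronts this head-on. It lets $\Ym$ be the coarse moduli space of $\Y$ and first shows that the induced map $\Ym \to T$ is a bijection (applying the hypothesis to the paracompact space $T' = $ a point) admitting a section (applying it to $T' = T$), hence a homeomorphism. It then observes, again using points, that the inertia groups of $\Y$ are trivial, so $\Y$ is a quasitopological space in the sense of \cite{Noohi}; finally it invokes a structural result (Proposition 7.9 of \cite{Noohi}) saying that a quasitopological space whose coarse moduli map has a section is already equal to its coarse moduli space. Your Yoneda-style outline is morally in the right direction---you do correctly extract the section and the pointwise bijectivity from the groupoid equivalence---but it needs this extra layer (coarse moduli + quasitopological) to bridge the gap between ``groupoid equivalence on paracompact test objects'' and ``isomorphism of stacks.''
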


\begin{proof}
  We only prove the statement for paratopological stacks. The case of
  pseudotopological stacks is  similar.

  Set $\Y:=T\times_{\X}\bar{\X}$, and let $\Ym$ be its coarse
  moduli space (\cite{Noohi}, $\S$4.3). By (\cite{Noohi},
  Proposition 4.15.iii), we have a continuous map $f \: \Ym \to T$.
  Since a point is paracompact, it follows from the definition that $f$
  is a bijection. On the other hand, since $T$ is
  paracompact, $f$ admits a section. Therefore, $f$ is a
  homeomorphism.

  Since a point is paracompact, it follows from the property of the
  map $p$ that the inertia groups of $\Y$ are trivial. That is,
  $\Y$ is a quasitopological space in the sense of (\cite{Noohi},
  $\S$7, page 27). Since the coarse moduli map $\Y \to \Ym=T$ admits
  a section, it follows from (\cite{Noohi}, Proposition 7.9) that
  $\Y=\Ym=T$.
\end{proof}

A slightly weaker version of Theorem \ref{T:nice} is true for
paratopological and pseudotopological stacks.

\begin{prop}{\label{P:nice2}}
  Let $\X$ be a paratopological (resp., pseudotopological)
  stack. Then, there exists a parashrinkable
  (resp., a pseudoshrinkable) morphism $\varphi \: X \to \X$
  from a topological space $X$. In particular, $\X$ is a homotopical
  stack and $(X,\varphi)$ is a classifying space
  for it (Definition \ref{D:paratop}). 
\end{prop}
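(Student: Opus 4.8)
The plan is to reduce Proposition \ref{P:nice2} to the already-established Theorem \ref{T:nice} by exploiting the auxiliary topological stack $\bar{\X}$ provided by Lemma \ref{L:A2}. First I would apply Lemma \ref{L:A2} to the given paratopological (resp.\ pseudotopological) stack $\X$, producing a topological stack $\bar{\X}$ together with a morphism $p \: \bar{\X} \to \X$ which induces an equivalence of groupoids $\bar{\X}(T) \to \X(T)$ for every paracompact $T$ (resp.\ every CW complex $T$). Since $\bar{\X}$ is an honest topological stack, Theorem \ref{T:nice} supplies a classifying space, that is, an atlas $\psi \: X \to \bar{\X}$ from a topological space $X$ which is locally shrinkable. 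I would then set $\varphi := p \circ \psi \: X \to \X$ and claim this is the desired parashrinkable (resp.\ pseudoshrinkable) morphism.

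The heart of the argument is the behaviour of base extensions of $\varphi$ along maps $g \: T \to \X$ with $T$ paracompact (resp.\ a CW complex). The key tool is Lemma \ref{L:paratop}, which says that for such $T$ the base extension $p_T \: T \times_{\X} \bar{\X} \to T$ is a \emph{homeomorphism}. Using this, I would analyze the base extension $\varphi_T \: T \times_{\X} X \to T$ as follows. Because fiber products compose, there is a canonical identification
$$
T \times_{\X} X \;\cong\; \bigl(T \times_{\X} \bar{\X}\bigr) \times_{\bar{\X}} X,
$$
and under the homeomorphism $T \times_{\X} \bar{\X} \risom T$ of Lemma \ref{L:paratop}, the right-hand side becomes $T \times_{\bar{\X}} X$, the base extension of $\psi$ along the induced map $\tilde{g} \: T \to \bar{\X}$ (the lift of $g$ through the equivalence $p$). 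Thus $\varphi_T$ is identified with the base extension $\psi_{\tilde{g}}$ of the locally shrinkable morphism $\psi$ along a map from a paracompact space (resp.\ a CW complex).

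It then remains to observe that a locally shrinkable morphism, when based-extended to a paracompact space, is shrinkable, and when based-extended to a CW complex, is pseudoshrinkable-in-the-relevant-sense. For the paracompact case this is precisely the middle implication of Lemma \ref{L:shrinkable}: locally shrinkable morphisms are parashrinkable, so $\psi_{\tilde{g}}$ is shrinkable over the paracompact $T$; hence $\varphi_T$ is shrinkable, proving $\varphi$ is parashrinkable. For the pseudotopological case, Lemma \ref{L:shrinkable} gives that locally shrinkable implies pseudoshrinkable, so the base extension along the CW complex $T$ is again shrinkable, proving $\varphi$ is pseudoshrinkable. In either case Lemma \ref{L:shrinkable} then yields that $\varphi$ is a universal weak equivalence (in the pseudoshrinkable case, via Corollary \ref{C:retractionwe} and the weak-equivalence implication), so $(X,\varphi)$ is a classifying space in the sense of Definition \ref{D:paratop} and $\X$ is homotopical.

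The main obstacle I anticipate is bookkeeping the compatibility of the two fiber-product identifications with the \emph{specific} map along which one base-extends: one must check that the homeomorphism of Lemma \ref{L:paratop} intertwines the projection to $T$ on one side with the map $\varphi_T$ on the other, and that the lift $\tilde{g}$ exists and is well-defined up to the ambiguity that does not affect shrinkability. This is a $2$-categorical diagram chase rather than a genuine difficulty, since all the hard analytic content (existence of local sections and the passage from local to global shrinkability via numerable covers) has already been absorbed into Theorem \ref{T:nice} and Lemma \ref{L:shrinkable}. The only real subtlety is ensuring that Lemma \ref{L:paratop}'s homeomorphism statement is strong enough to transport shrinkability verbatim, which it is, precisely because a homeomorphism is trivially shrinkable and base extension of shrinkable maps along it preserves the property.
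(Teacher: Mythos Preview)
Your proposal is correct and follows essentially the same approach as the paper: apply Lemma~\ref{L:A2} to get $p\:\bar{\X}\to\X$, use Theorem~\ref{T:nice} to get a locally shrinkable $\psi\:X\to\bar{\X}$, and observe via Lemma~\ref{L:paratop} that the composite $\varphi=p\circ\psi$ is parashrinkable (resp.\ pseudoshrinkable). The paper compresses your base-change analysis into the single phrase ``by Lemma~\ref{L:paratop},'' and the ``in particular'' clause follows directly from Lemma~\ref{L:shrinkable} (pseudoshrinkable $\Rightarrow$ universal weak equivalence) without needing Corollary~\ref{C:retractionwe}.
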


\begin{proof}
  Let $\X$ be a paratopological (resp., pseudotopological) stack,
  and let $p \: \bar{\X} \to \X$ be an approximation for it by a
  topological stack $\bar{\X}$ as in Lemma \ref{L:A2}. Choose an
  atlas $\bar{\varphi} \: X \to \bar{\X}$ for it which is locally
  shrinkable (Theorem \ref{T:nice}). Then, the composite
  $\varphi:=p\circ\bar{\varphi} \: X \to \X$ is parashrinkable
  (resp., pseudoshrinkable) by Lemma \ref{L:paratop}.
\end{proof}

\begin{rem}
Notice that, in contrast with Theorem \ref{T:nice},  the map
$\varphi$ in Proposition \ref{P:nice2} need not be an epimorphism.
\end{rem}

\begin{cor}{\label{C:pseudo}}
  We have the following full inclusions of 2-categories:
      $$\TopSt \subset \ParSt \subset \PsSt \subset \HoSt.$$
\end{cor}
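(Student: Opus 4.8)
The plan is to establish the four full inclusions $\TopSt \subset \ParSt \subset \PsSt \subset \HoSt$ by unwinding the definitions and invoking the preceding results. The key observation is that each of these 2-categories is a \emph{full} sub 2-category of the 2-category of all stacks over $\Top$, so to prove an inclusion $\mathfrak{A} \subset \mathfrak{B}$ it suffices to show that every object of $\mathfrak{A}$ satisfies the defining conditions of $\mathfrak{B}$; fullness is then automatic since the morphisms in both are simply all stack morphisms between the given objects.

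First I would handle $\TopSt \subset \ParSt$. Let $\X$ be a topological stack, presented by a groupoid $\bbX = [R \sst{} X]$. Condition ($\mathbf{A1}$) of Definition \ref{D:paratop} holds because a topological stack has representable diagonal. For ($\mathbf{A2}$), take the atlas $X \to \X$ coming from the presentation; since this is an atlas it admits local sections after base change along \emph{any} map $T \to \X$, in particular along those with $T$ paracompact. Thus $\X$ is paratopological. Next, for $\ParSt \subset \PsSt$, I would note that the defining condition ($\mathbf{A2}$) for pseudotopological stacks requires the base-change epimorphism property only for $T$ a CW complex, which is a \emph{weaker} demand than requiring it for all paracompact $T$; since every CW complex is paracompact, any stack satisfying ($\mathbf{A2}$) for all paracompact $T$ a fortiori satisfies it for CW complexes. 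Hence every paratopological stack is pseudotopological.

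The inclusion $\PsSt \subset \HoSt$ is the one requiring genuine input rather than a definitional comparison, and is where Proposition \ref{P:nice2} does the work. Let $\X$ be pseudotopological. By Proposition \ref{P:nice2}, there exists a pseudoshrinkable morphism $\varphi \: X \to \X$ from a topological space $X$, and the proposition asserts precisely that $\X$ is then homotopical with $(X,\varphi)$ a classifying space. Concretely, one checks that a pseudoshrinkable morphism induces weak equivalences after base change along CW complexes: by Lemma \ref{L:shrinkable} pseudoshrinkable implies universal weak equivalence, so for every CW complex $T \to \X$ the base extension $T \times_\X X \to T$ is a weak equivalence, which is exactly condition ($\mathbf{A2}$) in its homotopical form. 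The footnote to Definition \ref{D:paratop} then upgrades this to all topological spaces $T$.

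I expect no serious obstacle here: the two middle inclusions are immediate from the logical strength of the conditions (all paracompact $T$ $\Rightarrow$ all CW $T$), and the only nontrivial inclusion $\PsSt \subset \HoSt$ has already been reduced to Proposition \ref{P:nice2}, whose proof constructs the required classifying space via a locally shrinkable atlas of the topological approximation $\bar\X$ (Lemma \ref{L:A2}) composed with $p$, using Lemma \ref{L:paratop} to control the base changes. The main subtlety to keep in mind is that, as noted in the remark following Proposition \ref{P:nice2}, the classifying map need not be an epimorphism, so one must verify the homotopical condition ($\mathbf{A2}$) through the universal weak equivalence property supplied by Lemma \ref{L:shrinkable} rather than through any surjectivity statement. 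Given these ingredients the corollary follows by assembling the four inclusions.
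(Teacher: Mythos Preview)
Your proposal is correct and follows exactly the paper's approach: the first two inclusions are immediate from the definitions (an atlas gives local sections over any $T$, and CW complexes are paracompact), while $\PsSt \subset \HoSt$ is precisely the content of Proposition~\ref{P:nice2}. The only slip is cosmetic---there are three inclusions in the chain, not four---but the argument itself matches the paper's one-line proof, just with the details unpacked.
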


\begin{proof}
  The first two inclusions are clear from the definition. The last
  inclusion follows from Proposition \ref{P:nice2}.
\end{proof}

\begin{thm}{\label{T:weakly}}
  Theorem \ref{T:whtpytype} remains valid if we replace $\TopSt$ with
  $\ParSt$ (resp., $\PsSt$ or $\HoSt$). The last
  statement in Theorem \ref{T:whtpytype} on the units and counits of
  the adjunction also remains valid provided that we replace
  locally shrinkable by parashrinkable (resp., pseudoshrinkable,
  universal weak equivalence).
\end{thm}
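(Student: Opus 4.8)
The plan is to reduce Theorem \ref{T:weakly} to the already-established Theorem \ref{T:whtpytype} by showing that the only two ingredients that the proof of Theorem \ref{T:whtpytype} actually used about $\TopSt$ are available for $\ParSt$, $\PsSt$, and $\HoSt$ as well. Recall that Theorem \ref{T:whtpytype} was obtained from Proposition \ref{P:univwe} via the formal Lemma \ref{L:adjunction}; and Proposition \ref{P:univwe} was in turn an application of the abstract Lemma \ref{L:adjoint} to the pair $(\mfC,\sfB)=(\TopSt,\Top)$ with $R$ the class of locally shrinkable maps of spaces. So the whole argument is purely formal once one verifies the hypotheses of Lemma \ref{L:adjoint}. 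Those hypotheses are: (i) $\mfC$ is a 2-category with fiber products whose 2-morphisms are invertible, $\sfB$ a full 1-subcategory closed under fiber products, and (ii) for every object $\X$ of $\mfC$ there is an object $\Theta(\X)$ in $\sfB$ together with a morphism $\varphi_\X \: \Theta(\X) \to \X$ lying in $\tilde R$.

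First I would treat the homotopical case, taking $\mfC=\HoSt$ and $\sfB=\Top$, with $R$ the class of universal weak equivalences among topological \emph{spaces} (equivalently, by Lemma \ref{L:shrinkable} and the discussion around Proposition \ref{P:univwe}, the relevant $\tilde R$ in $\mfC$ is the class of representable universal weak equivalences). The point (i) is immediate: $\HoSt$ is a full sub 2-category of stacks, hence has fiber products and invertible 2-morphisms, and $\Top$ sits inside it as a full 1-subcategory closed under fiber products. The content is entirely in point (ii), which is exactly the statement supplied by Proposition \ref{P:nice2}: every paratopological, pseudotopological, or homotopical stack admits a morphism $\varphi \: X \to \X$ from a space which is parashrinkable, pseudoshrinkable, or a universal weak equivalence respectively, and in each of these cases $\varphi$ lies in the appropriate $\tilde R$ (parashrinkable and pseudoshrinkable morphisms are universal weak equivalences by Lemma \ref{L:shrinkable}, via Corollary \ref{C:retractionwe}). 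Thus Lemma \ref{L:adjoint} applies verbatim, yielding the analogue of Proposition \ref{P:univwe} with $\TopSt$ replaced by $\ParSt$, $\PsSt$, or $\HoSt$, and with the units of adjunction being honest morphisms in $\tilde R$, i.e.\ parashrinkable, pseudoshrinkable, or universal weak equivalences as claimed in the last sentence of the theorem.

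With the analogue of Proposition \ref{P:univwe} in hand, the passage to weak homotopy types is identical to the proof of Theorem \ref{T:whtpytype}: apply Lemma \ref{L:adjunction} to cut down from $R^{-1}$ to $S_{w.e.}^{-1}$, taking $S \subset R^{-1}\Top$ to be the class of weak equivalences and $T=\iota(R)$; one checks, exactly as before, that $F=\iota$ and $G=\Theta$ respect these classes (weak equivalences of spaces go to weak homotopy equivalences of stacks under $\iota$, and $\Theta$ carries the latter back to weak equivalences because the counits are identities and the units lie in $\tilde R$, which consists of universal weak equivalences). This gives the fully faithful $\iota \: \Top_{w.e.} \to S_{w.e.}^{-1}[\mfC]$ with right adjoint $\Theta$ for each of the three 2-categories $\mfC$. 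I expect the only real obstacle to be bookkeeping: confirming that in the pseudotopological case the localizing class $R$ should be taken to be \emph{pseudoshrinkable} (not merely universal weak equivalence) maps so that the units of adjunction land in the sharper class named in the theorem, and correspondingly that Lemma \ref{L:shrinkable} and Corollary \ref{C:retractionwe} guarantee $\tilde R \subset \{\text{universal weak equivalences}\}$ so that Lemma \ref{L:adjunction} still sees these units as weak equivalences after localizing at $S_{w.e.}$. Once the three choices of $R$ are lined up with the three parenthetical classes in the theorem, every step is a direct invocation of the already-proven formal lemmas, and no new geometric input beyond Proposition \ref{P:nice2} is required.
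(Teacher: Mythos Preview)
Your proposal is correct and follows essentially the same approach as the paper: the paper's proof simply says that the argument of Theorem \ref{T:whtpytype} carries over verbatim once one substitutes Proposition \ref{P:nice2} for Theorem \ref{T:nice}, and your write-up unpacks exactly what this means. One small point: your claim that $\HoSt$ (and $\ParSt$, $\PsSt$) has fiber products is not automatic from being a full sub 2-category of stacks; you should cite Lemma \ref{L:lim3} here rather than asserting it is immediate.
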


\begin{proof}
  The same argument used in the proof of Theorem \ref{T:whtpytype}
  carries over verbatim (we have to use  Proposition
 \ref{P:nice2} instead of Theorem \ref{T:nice}).
\end{proof}

\begin{defn}{\label{D:hpara}}
  We say that a paratopological stack  is {\bf \hpara}, if there
  exists a parashrinkable morphism $\varphi \: X \to \X$ such that
  $X$ is a paracompact topological space (see Proposition
  \ref{P:nice2}). We denote the full subcategory of $\ParSt$
  consisting
  of \hpara\   paratopological stacks  by $\HPParSt$.
\end{defn}

See Proposition \ref{P:paracompact} for examples of \hpara\ stacks.

The following theorem says that a \hpara\ paratopological stack has
the homotopy type of a paracompact topological space.

\begin{thm}{\label{T:hweakly}}
  Theorem \ref{T:htpytype} remains valid if we replace $\HPTopSt$ by
  $\HPParSt$. Furthermore, the last
  statement in Theorem \ref{T:whtpytype} on the units and counits of
  the adjunction remains valid with locally shrinkable replaced
  by parashrinkable.
\end{thm}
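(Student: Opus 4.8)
The plan is to mimic the proof of Theorem~\ref{T:htpytype} as closely as possible, since Theorem~\ref{T:hweakly} is its exact analogue with $\HPTopSt$ replaced by $\HPParSt$. First I would start from the adjunction
  $$\iota \: R^{-1}\Top \rightleftharpoons R^{-1}[\ParSt] :\!\Theta$$
supplied by Theorem~\ref{T:weakly} (the paratopological version of Proposition~\ref{P:univwe}), rather than from the topological version in Proposition~\ref{P:univwe}. The essential point is that, by Proposition~\ref{P:nice2}, every \hpara\ paratopological stack $\X$ admits a parashrinkable morphism $\varphi \: X \to \X$ with $X$ paracompact; so we may arrange the construction of $\Theta$ so that $\Theta(\X)$ is always a paracompact topological space whenever $\X$ is \hpara. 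This is the paracompact-valued refinement of $\Theta$ that makes the restriction to subcategories possible.

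Next I would verify that the two ingredients feeding Lemmas~\ref{L:sub} and~\ref{L:adjunction} survive the replacement of $\HPTopSt$ by $\HPParSt$. Concretely, I need the analogue of Lemma~\ref{L:paracompact} for paratopological stacks: that the inclusion $\Para \to \ParSt$ induces a functor $S_{h.e.}^{-1}[\HPParSt] \to R^{-1}[\ParSt]$ which is a bijection on $\Hom(T,\Y)$ for $T$ a paracompact topological space. I expect the proof of Lemma~\ref{L:paracompact} to carry over essentially verbatim, since its only inputs are the calculus of right fractions for $\bar R$ (the class of composites of locally shrinkable morphisms), Lemma~\ref{L:zigzag}, and the lifting property of Lemma~\ref{L:lift}. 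The one place that requires care is that in the paratopological setting the relevant maps $\varphi$ are \emph{parashrinkable} rather than locally shrinkable; but Lemma~\ref{L:lift} is stated directly for parashrinkable morphisms, so the homotopy-uniqueness of lifts over paracompact $T$ still holds, and the argument showing that $\bar R$-homotopy coincides with ordinary homotopy goes through once $\bar R$ is taken to be composites of parashrinkable morphisms.

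Having secured this, I would invoke Lemma~\ref{L:sub} to restrict the adjunction $\iota \: R^{-1}\Top \rightleftharpoons R^{-1}[\ParSt] :\!\Theta$ to the full subcategories of paracompact objects on each side, obtaining
  $$\iota \: \Para_{h.e.} \rightleftharpoons S_{h.e.}^{-1}[\HPParSt] :\!\Theta,$$
exactly as in the proof of Theorem~\ref{T:htpytype}. The full faithfulness of $\iota$ and the statement about units and counits then follow formally: the counits are identities because we arranged $\Theta(\X)=X$ for $X \in \Para$, and the units are the parashrinkable maps $\varphi_\X \: \Theta(\X) \to \X$ coming from Proposition~\ref{P:nice2}. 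I anticipate the main obstacle is not any single deep step but rather the bookkeeping in the paracompact analogue of Lemma~\ref{L:paracompact}: one must confirm that the calculus-of-fractions description of $\Hom_{R^{-1}[\ParSt]}(T,\Y)$ behaves well when $R$ is enlarged to parashrinkable maps, and in particular that the paracompactness of $T\times[0,1]$ is still available to run the lifting argument of Lemma~\ref{L:lift} in this broader setting. Once that technical point is in hand, the rest is a formal transcription of the topological case.
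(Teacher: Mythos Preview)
Your proposal is correct and follows exactly the approach the paper intends: the paper's own proof is simply ``The proof of Theorem~\ref{T:htpytype} carries over,'' and your outline spells out precisely the substitutions (Proposition~\ref{P:nice2} in place of Theorem~\ref{T:nice}, parashrinkable for locally shrinkable, and the paratopological analogue of Lemma~\ref{L:paracompact}) that make this transcription work. Your identification of the one nontrivial checkpoint---that Lemma~\ref{L:lift} already applies to parashrinkable maps over paracompact bases, so the $\bar R$-homotopy argument in Lemma~\ref{L:paracompact} survives---is exactly right.
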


\begin{proof}
  The proof of Theorem \ref{T:htpytype} carries over.
\end{proof}

\begin{rem}{\label{R:pseudo}}
 There is also a version of Theorem  \ref{T:hweakly} for
 pseudotopological stacks. We define a pseudotopological
 stack to be {\em  \hcw} if there exists a
 pseudoshrinkable morphism $\varphi \: X \to \X$
 such that $X$ is a CW complex (see Proposition
 \ref{P:nice2}). We leave it to the reader to reformulate Theorem
 \ref{T:hweakly} accordingly. It follows that a \hcw\
 pseudotopological stack has the homotopy type of a CW complex.
\end{rem}

The following lemmas will be used later on when we discuss homotopy
types of diagrams of stacks.

\begin{lem}{\label{L:lim1}}
   The 2-category of stacks $\X$ whose diagonal $\X \to \X\times\X$
  is representable is closed under arbitrary (2-categorical) limits.
\end{lem}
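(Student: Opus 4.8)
The plan is to prove that the 2-category of stacks with representable diagonal is closed under arbitrary 2-limits by reducing the question to the two defining features of such a limit: representability of morphisms from test spaces, equivalently representability of the diagonal. Concretely, suppose we are given a small diagram $P \: \sfD \to \mathbf{St}$ of stacks, each $P(d)=\X_d$ having representable diagonal, and let $\X = \lim_{\sfD} \X_d$ be the 2-limit taken in the ambient 2-category of all stacks over $\Top$ (which does have all 2-limits). I must show that the diagonal $\X \to \X\times\X$ is representable, i.e.\ that for every map $T \to \X\times\X$ from a topological space $T$, the fiber product $T\times_{\X\times\X}\X$ is again a topological space (or more precisely representable).

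First I would recall the standard description of a 2-limit of stacks as the stack of \emph{coherent families}: an object of $\X(W)$ over a test space $W$ is a compatible system $(\xi_d)_{d\in\sfD}$ with $\xi_d \in \X_d(W)$ together with the coherence 2-isomorphisms indexed by the arrows of $\sfD$, and morphisms are componentwise morphisms compatible with the coherence data. The key reduction is that a stack has representable diagonal exactly when, for every pair of objects $\xi,\eta \in \X(W)$, the sheaf $\underline{\Isom}_W(\xi,\eta)$ is representable by a space over $W$. So I would translate the closure statement into: if each $\underline{\Isom}$ sheaf for the $\X_d$ is representable, then so is the $\underline{\Isom}$ sheaf for $\X$.

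The main step is then to exhibit $\underline{\Isom}_{\X}$ as a limit of the $\underline{\Isom}_{\X_d}$'s built out of representable pieces. Given two coherent families $(\xi_d)$ and $(\eta_d)$ in $\X(W)$, an isomorphism between them is the data of isomorphisms $\theta_d \in \underline{\Isom}_W(\xi_d,\eta_d)$ for each $d$, subject to compatibility with every arrow $a\: d\to d'$ of $\sfD$. Each $\underline{\Isom}_W(\xi_d,\eta_d)$ is representable by hypothesis, and the compatibility conditions are equalizer-type (equivalently, fiber-product) conditions over the representable $\underline{\Isom}$ sheaves of the targets. Thus $\underline{\Isom}_{\X}((\xi_d),(\eta_d))$ is the limit over $\sfD$ of a diagram of representable sheaves; since representable sheaves over $W$ are closed under arbitrary limits inside the category of sheaves on $\Top$ (limits of spaces over $W$ are computed as spaces over $W$), this limit is again representable. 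Finally I would invoke Lemma \ref{L:lim1}'s own hypothesis structure, namely that condition (A1) of Definition \ref{D:paratop} is precisely representability of the diagonal, to package the conclusion cleanly.

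The hard part will be the bookkeeping of the coherence 2-isomorphisms: because we allow \emph{arbitrary} (2-categorical, not strict) limits and lax/pseudo-diagrams, the compatibility conditions defining a morphism of coherent families involve the 2-cells of the diagram, and one must check that these conditions really do carve out a limit of representable sheaves rather than something more exotic. I expect the genuine content to lie in verifying that passing to the 2-limit does not introduce non-representable behaviour through these higher coherences — in other words, that the sheaf-theoretic limit of representables stays representable even in the presence of the invertible 2-morphisms (which, by the paper's standing convention, are all invertible, simplifying matters). The routine verifications that the limit stack is genuinely the 2-limit and that the $\underline{\Isom}$ description is correct I would leave to the reader.
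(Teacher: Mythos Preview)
Your proposal is correct and follows essentially the same idea as the paper: the fiber of the diagonal of the limit over a test space is the limit of the fibers of the individual diagonals, and a limit of topological spaces is a topological space. The only difference is packaging. You unpack objects of $\X=\lim_d\X_d$ as coherent families and describe $\underline{\operatorname{Isom}}_{\X}$ explicitly as a limit (with equalizer conditions) of the representable sheaves $\underline{\operatorname{Isom}}_{\X_d}$; the paper instead proves the slightly more general statement that if $\Delta_d\colon \X_d\to\Y_d$ are all representable then $\lim\Delta\colon\lim\X_d\to\lim\Y_d$ is representable, and deduces the lemma by taking $\Y_d=\X_d\times\X_d$. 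Its argument is a one-line appeal to the universal property, namely the natural isomorphism $\lim_d f_d^*(\X_d)\cong f^*(\lim_d\X_d)$ for any $f\colon T\to\lim_d\Y_d$, which sidesteps exactly the coherence bookkeeping you flag as the ``hard part'': since one never writes down what an object of the limit looks like, the invertible 2-cells in the diagram never appear. Your worry about non-representable behaviour creeping in through higher coherences is therefore unfounded, and the routine verification you defer is in fact the whole proof.
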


\begin{proof}
  We prove a more general fact. Consider two diagrams
  $\bbX=\{\X_d\}$ and $\bbY=\{\Y_d\}$ of stacks, where $d$ ranges in
  some index category $\sfD$. Let $\Delta\: \bbX \Rightarrow \bbY$
  be a natural transformation such that for every $d$, $\Delta_d \:
  \X_d \to \Y_d$ is representable. We claim that the induced morphism
  $\liminv\Delta \: \liminv\bbX \to \liminv\bbY$ is also
  representable. (Applying this to the case where
  $\Y_d=\X_d\times\X_d$ and $\Delta_d \: \X_d \to \X_d\times\X_d$
  are the diagonal morphisms proves the lemma.)

  To prove the claim, take an arbitrary map $f\: T \to \liminv\bbY$
  from a topological space $T$. Note that to give $f$ is the same
  thing as to give a compatible family of maps $f_d \: T \to \Y_d$.
  It follows easily from the universal property of limits that we
  have a natural isomorphism
    $$\liminv f_d^*(\X_d) \cong f^*(\liminv\bbX).$$
  (By $f^*$ we mean pull-back along $f$, e.g.,
  $f_d^*(\X_d):=T\times_{f_d,\Y_d,\Delta_d}\X_d$.) Since the diagram
  on the left is a diagram of topological spaces, it follows that
  $f^*(\liminv\bbX)$ is a topological space.
\end{proof}

\begin{lem}{\label{L:lim2}}
  Let $\Y$ be a stack whose diagonal $\Y \to \Y\times\Y$ is
  representable. Let $p_i \: \X_i \to \Y$, $i \in I$, be a family of
  stacks over $\Y$. If every $\X_i$ is paratopological
  (resp., pseudotopoloical), then so is their fiber product
  $\prod_{\Y} \X_i$. If $I$ is finite, the same statement is true for
  topological stacks and homotopical stacks. 
\end{lem}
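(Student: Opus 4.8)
The plan is to reduce each claim to the corresponding property of the approximating topological stacks, using the characterization of paratopological (resp.\ pseudotopological) stacks given in Lemma \ref{L:A2}, together with the behavior of shrinkable-type morphisms under fiber products already established in Lemma \ref{L:prod}. First I would observe that the fiber product $\prod_\Y \X_i$ has representable diagonal: since each $\X_i \to \Y$ is representable and $\Y$ has representable diagonal, the diagonal of the fiber product is representable, so the hypothesis of Lemma \ref{L:A2} is available at the end. For the paratopological (resp.\ pseudotopological) case, I would choose for each $i$ a morphism $\varphi_i \: X_i \to \X_i$ from a topological space as in condition ($\mathbf{A2}$) of Definition \ref{D:paratop}, i.e.\ such that the base extension to any paracompact $T$ (resp.\ CW complex $T$) is an epimorphism. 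By Proposition \ref{P:nice2}, each such $\varphi_i$ can even be taken parashrinkable (resp.\ pseudoshrinkable).

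The key step is to form the fiber product $\varphi \: \prod_\Y X_i \to \prod_\Y \X_i$ of the $\varphi_i$ over $\Y$ and show it exhibits $\prod_\Y \X_i$ as paratopological (resp.\ pseudotopological). Here I would apply Lemma \ref{L:prod}: since each $\varphi_i$ is parashrinkable (resp.\ pseudoshrinkable), their fiber product $\varphi$ is again parashrinkable (resp.\ pseudoshrinkable), hence in particular satisfies ($\mathbf{A2}$). The source $\prod_\Y X_i$ is an honest topological space because each $X_i$ is a space and $\Y$ has representable diagonal, so the fiber product of spaces over $\Y$ is again a space. This directly verifies Definition \ref{D:paratop} for $\prod_\Y \X_i$, giving the paratopological (resp.\ pseudotopological) conclusion for arbitrary index sets $I$; note that the arbitrary-$I$ case of Lemma \ref{L:prod} applies to exactly the parashrinkable and pseudoshrinkable properties, which matches the generality asserted.

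For the finite-$I$ statements, the topological case follows because a topological stack is exactly one admitting a locally shrinkable atlas (Theorem \ref{T:nice}); choosing locally shrinkable atlases $\varphi_i \: X_i \to \X_i$ and forming their fiber product, Lemma \ref{L:prod} tells us that a finite fiber product of locally shrinkable (indeed of epimorphic) morphisms is again locally shrinkable, and in particular an epimorphism, so $\varphi$ is an atlas exhibiting $\prod_\Y \X_i$ as a topological stack. The homotopical case for finite $I$ is handled the same way using that the finite fiber product of universal weak equivalences is again a universal weak equivalence (again Lemma \ref{L:prod}). The main obstacle is keeping straight which shrinkability property survives under infinite versus finite fiber products: the parashrinkable and pseudoshrinkable properties are closed under arbitrary fiber products, whereas epimorphism (needed for the topological case) and universal weak equivalence (needed for the homotopical case) are only guaranteed closed under finite fiber products, which is precisely why the lemma restricts those two cases to finite $I$. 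Everything else is a routine verification of Definition \ref{D:paratop} and is left to the reader.
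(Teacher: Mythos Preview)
Your approach is essentially the same as the paper's, but two points need tightening. First, you claim that each $p_i \: \X_i \to \Y$ is representable; this is not part of the hypotheses and is not generally true. The correct way to get condition ($\mathbf{A1}$) for $\prod_\Y \X_i$ is to invoke Lemma~\ref{L:lim1}: each $\X_i$ has representable diagonal (being paratopological), and stacks with representable diagonal are closed under limits. Second, Lemma~\ref{L:prod} as stated concerns morphisms $f_i \: \X_i \to \Y$ with a \emph{common} target $\Y$, and its conclusion is about the map $\prod_\Y \X_i \to \Y$; it does not literally say that a fiber product of parashrinkable maps $\varphi_i \: X_i \to \X_i$ with \emph{different} targets is parashrinkable. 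The paper closes this gap by the explicit computation you are implicitly assuming: given $g \: T \to \prod_\Y \X_i$ with $T$ paracompact and components $g_i \: T \to \X_i$, the base extension of $\varphi$ along $g$ is identified with $\prod_T T_i \to T$, where $T_i = T \times_{\X_i} X_i$, and \emph{then} Lemma~\ref{L:prod} applies since the $f_i \: T_i \to T$ are shrinkable maps to the common base $T$. (Equivalently, you could first pull each $\varphi_i$ back along the projection $\prod_\Y \X_j \to \X_i$ and then apply Lemma~\ref{L:prod} over the common base $\prod_\Y \X_j$.) With these corrections your argument matches the paper's.
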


\begin{proof}
  We prove the case of paratopological stacks. The other cases are
  proved similarly.

  Condition ($\mathbf{A1}$) of Definition \ref{D:paratop} is
  satisfied by Lemma \ref{L:lim1}. Choose $\varphi_i \: X_i \to
  \X_i$ as in Proposition \ref{P:nice2}. Set $X=\prod_{\Y} X_i$. We
  claim that the induced map $\varphi \: X \to \prod_{\Y} \X_i$ is
  parashrinkable (hence, satisfies condition ($\mathbf{A2}$)
  of Definition \ref{D:paratop},
  $\mathbf{A2}$). Let $g \: T \to \prod_{\Y} \X_i$ be a map from a
  paracompact topological space $T$, and denote its $i$-th component
  by $g_i \: T \to \X_i$. Let $f_i \: T_i \to T$ be the base
  extension of $\varphi_i$ along $g_i$, as in the 2-cartesian diagram
      $$\xymatrix@=16pt@M=8pt{
           T_i \ar[r] \ar[d]_{f_i} & X_i \ar[d]^{\varphi_i} \\
                              T \ar[r]_{g_i} & \X_i} $$
   We have a 2-cartesian diagram
       $$\xymatrix@=16pt@M=8pt{
          \prod_{T}T_i \ar[r] \ar[d]_f & \prod_{\Y}X_i \ar[d]^{\varphi} \\
                              T \ar[r]_{g} & \prod_{\Y}\X_i }$$
   Since $f_i$ is shrinkable for every $i$, the claim follows from Lemma \ref{L:prod}.
\end{proof}

\begin{lem}{\label{L:lim3}}
  The 2-categories $\TopSt$, $\ParSt$, $\PsSt$, and $\HoSt$ are
  closed under finite limits.
\end{lem}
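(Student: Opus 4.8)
The plan is to reduce the existence of all finite (2-)limits to two much smaller closure properties---the existence of a terminal object and closure under pullbacks---and then to produce these from the one-point space together with Lemma \ref{L:lim2}. Recall the standard fact that a category (and, with the appropriate bookkeeping, a 2-category) possesses all finite limits as soon as it has a terminal object and all pullbacks: binary products are recovered as pullbacks over the terminal object, the object $\Y\times\Y$ is then available, equalizers of $f,g \: \X \to \Y$ are constructed as the pullback of $(f,g) \: \X \to \Y\times\Y$ along the diagonal $\Y \to \Y\times\Y$, and an arbitrary finite limit is assembled from finite products and equalizers. Since the ambient 2-category of all stacks over $\Top$ has all 2-limits and each of $\TopSt$, $\ParSt$, $\PsSt$, $\HoSt$ is a \emph{full} sub 2-category (Corollary \ref{C:pseudo}), it suffices to check that a finite 2-limit of a diagram valued in one of these four classes, computed in all stacks, again lands in that class; fullness guarantees that such an object then satisfies the universal property inside the sub 2-category as well.

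For the terminal object, the one-point space $*$ is the terminal stack over $\Top$, and it is a topological space, hence lies in $\TopSt$ and therefore in all four 2-categories by Corollary \ref{C:pseudo}. For pullbacks, let $\mfC$ denote any one of $\TopSt$, $\ParSt$, $\PsSt$, $\HoSt$, and let $\X_1 \to \Y \leftarrow \X_2$ be a cospan in $\mfC$. Every object of $\mfC$ is in particular homotopical, so its diagonal is representable (condition ($\mathbf{A1}$) of Definition \ref{D:paratop}); in particular $\Y$ has representable diagonal. Applying Lemma \ref{L:lim2} to the two-element family $\{\X_1 \to \Y,\ \X_2 \to \Y\}$ then shows that the fiber product $\X_1\times_{\Y}\X_2$ again belongs to $\mfC$. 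The index set $I=\{1,2\}$ is finite, so this covers the cases $\mfC=\TopSt$ and $\mfC=\HoSt$ (for which Lemma \ref{L:lim2} requires finiteness of $I$) as well as $\ParSt$ and $\PsSt$. Taking $\Y=*$ gives binary products, and the construction is stable under iteration: the intermediate objects appearing in the reduction above (for instance $\Y\times\Y$) again lie in $\mfC$ and hence again have representable diagonals, so Lemma \ref{L:lim2} may be reapplied at each stage.

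The only genuinely delicate point is the 2-categorical version of the reduction in the first paragraph: one must check that building products over the terminal object and equalizers as pullbacks along diagonals respects the \emph{weak} universal properties of 2-limits, and that a 2-limit computed in the 2-category of all stacks, once it is seen to lie in the full sub 2-category $\mfC$, serves as the 2-limit there. Granting this standard but slightly technical bookkeeping, the combination of the terminal object $*$ and closure under pullbacks via Lemma \ref{L:lim2} yields closure of $\mfC$ under all finite limits, completing the proof for each of the four 2-categories.
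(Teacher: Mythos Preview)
Your proof is correct and follows essentially the same route as the paper: reduce to a terminal object and 2-fiber products, invoke Lemma~\ref{L:lim2} for the latter (with the observation that the base $\Y$ has representable diagonal), and appeal to the general fact that these two ingredients suffice for all finite limits. You supply considerably more detail than the paper---spelling out the equalizer construction, the fullness argument, and the verification of the hypotheses of Lemma~\ref{L:lim2}---but the strategy is the same.
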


\begin{proof}
  To show that these 2-categories are closed under finite limits, it
  is enough that 2-fiber products exist, which is the case
  by Lemma \ref{L:lim2}. The case of an arbitrary finite limit then
  follows from the general fact that a 2-category that has 2-fiber
  products and a final object is closed under arbitrary
  finite limits.
\end{proof}

\section{Homotopy groups of homotopical stacks}{\label{S:Homotopy}}

By (\cite{Noohi}, $\S$17) we know that if $(\X,x)$ is a pointed {\em
Serre}\footnote{In [ibid.] we call these {\em topological stacks}.}
topological stack (Definition \ref{D:Serre}),  the standard
definition
  $$\pi_n(\X,x):=[(S^n,\bullet),(\X,x)]$$
of homotopy groups in terms of homotopy classes of pointed maps
gives rise to well-defined homotopy groups for $\X$ that enjoy the
expected properties. 
Theorem \ref{T:nice} gives an alternative  definition of higher
homotopy groups that works for an arbitrary topological stack $\X$.
(In fact, all we need  for this definition to make sense is that $\X$ 
be a homotopical stack.) In this section
we prove that these two definition are equivalent (Theorem \ref{T:homotopy}).

Let us first recall the definition of a Serre topological stack.

 \begin{defn}[\oldcite{Noohi}, $\S$17]{\label{D:Serre}}
   We say that a topological stack $\X$ is {\bf Serre} if it is equivalent
   to the quotient stack of a topological groupoid $[s,t \: R\sst{}X]$
   whose source map  (hence, also its target map) is a local Serre
   fibrations. That is, for every $y \in R$, there exists an open
   neighborhood $U\subseteq R$ of $y$  and $V\subseteq X$ of $f(y)$ such
   that the restriction of $s|_U \: U \to V$is a Serre fibration.
 \end{defn}

\begin{lem}{\label{L:square}}
 Let $f \: \X \to \Y$ be a morphism of homotopical stacks
 (resp., paratopological stacks).
 Then, there is a 2-commutative diagram
   $$\xymatrix@=16pt@M=8pt{
           X \ar[r]^g \ar[d]_{\varphi} & Y \ar[d]^{\psi} \\
                              \X \ar[r]_{f} & \Y} $$
 where $X$ and $Y$ are topological spaces (resp. paracompact
 topological spaces) and $\varphi$ and $\psi$
 are universal weak equivalences (resp., parashrinkable morphisms).
\end{lem}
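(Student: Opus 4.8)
The plan is to produce the square by a fiber-product construction: first replace $\Y$ by a classifying space $\psi\:Y\to\Y$, then form the stack $\mathcal{Z}:=\X\times_{\Y}Y$ and replace \emph{it} by a classifying space $X\to\mathcal{Z}$; the two projections out of $\mathcal{Z}$ will supply $\varphi$ and $g$.

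In detail, for the homotopical case I would first invoke Proposition \ref{P:nice2} (or Definition \ref{D:paratop}) to choose a topological space $Y$ and a universal weak equivalence $\psi\:Y\to\Y$. Form the $2$-fiber product $\mathcal{Z}:=\X\times_{f,\Y,\psi}Y$, which is again a homotopical stack by Lemma \ref{L:lim2} (a finite fiber product of homotopical stacks over a stack with representable diagonal), and let $q:=\pr_{\X}\:\mathcal{Z}\to\X$ and $\pr_Y\:\mathcal{Z}\to Y$ be the projections, together with the structural $2$-isomorphism $\eta\:f\circ q\Rightarrow\psi\circ\pr_Y$. Since $q$ is the base extension of $\psi$ along $f$, it is a universal weak equivalence by Lemma \ref{L:epibasechange0}. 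Applying Proposition \ref{P:nice2} once more, choose a topological space $X$ and a universal weak equivalence $\pi\:X\to\mathcal{Z}$. Now set
$$\varphi:=q\circ\pi\:X\to\X,\qquad g:=\pr_Y\circ\pi\:X\to Y.$$
Then $\varphi$ is a universal weak equivalence, since universal weak equivalences are closed under composition (as used in the proof of Lemma \ref{L:prod}), and whiskering $\eta$ with $\pi$ gives a $2$-isomorphism $f\circ\varphi\Rightarrow\psi\circ g$, so the square $2$-commutes. This settles the homotopical case.

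For the paratopological case the conclusion asks for \emph{paracompact} spaces and \emph{parashrinkable} maps, and here I would avoid taking the composite $q\circ\pi$, because parashrinkable morphisms are not evidently closed under composition: in the fiber-product argument the intermediate space produced by the first base extension need not be paracompact, so one cannot re-apply the parashrinkability of the second map. Instead I would use the lifting property directly. Choose, by Proposition \ref{P:nice2} (with the spaces taken paracompact, which is possible under the standing hypotheses; see Proposition \ref{P:paracompact}), parashrinkable maps $\psi\:Y\to\Y$ and $\varphi\:X\to\X$ with $X$ and $Y$ paracompact. The composite $f\circ\varphi\:X\to\Y$ is then a morphism from a paracompact space, so Lemma \ref{L:lift}, applied to the parashrinkable morphism $\psi$ and the paracompact space $X$, produces a lift $g\:X\to Y$ together with a $2$-isomorphism $\psi\circ g\Rightarrow f\circ\varphi$. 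This is exactly the desired $2$-commutative square, with $X,Y$ paracompact and $\varphi,\psi$ parashrinkable.

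The main obstacle is precisely the failure of parashrinkable morphisms to compose well; the point of the second argument is that Lemma \ref{L:lift} lets us build $g$ in one step from the paracompactness of $X$, sidestepping any intermediate non-paracompact space. The only routine verifications left are that $\mathcal{Z}$ is homotopical (Lemma \ref{L:lim2}), that base extension preserves the relevant property (Lemma \ref{L:epibasechange0}), and the bookkeeping of the $2$-isomorphisms, all of which are straightforward.
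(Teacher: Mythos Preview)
Your homotopical-case argument is essentially identical to the paper's: choose $\psi\:Y\to\Y$, form the fiber product $\X\times_{\Y}Y$, and replace it by a classifying space; the paper cites Lemma~\ref{L:lim3} where you cite Lemma~\ref{L:lim2}, but the content is the same.

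For the paratopological case the paper says only ``We only prove the case of homotopical stacks,'' presumably intending the same fiber-product argument to go through verbatim. Your observation that this is not automatic is correct and worth making: with $\varphi=\pr_{\X}\circ\pi$, to check parashrinkability one must show that for paracompact $T\to\X$ the pullback $T\times_{\X}X\to T$ is shrinkable, and the natural factorization through $T\times_{\X}\mathcal{Z}$ runs into the difficulty that $T\times_{\X}\mathcal{Z}$ need not be paracompact, so parashrinkability of $\pi$ cannot be invoked there. Your alternative via Lemma~\ref{L:lift}---choose $\varphi$ and $\psi$ independently and lift $f\circ\varphi$ along $\psi$ using only the paracompactness of $X$---sidesteps this cleanly and is a genuine improvement in rigor over what the paper writes down.

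One caveat: you assume $X$ and $Y$ can be taken paracompact, citing Proposition~\ref{P:paracompact}. That proposition gives only \emph{sufficient} conditions on a groupoid presentation; the existence of a parashrinkable map from a paracompact space is exactly the definition of \emph{hoparacompact} (Definition~\ref{D:hpara}), which is strictly stronger than being paratopological and is not part of the lemma's hypotheses. So the ``resp.''\ clause of the lemma as stated seems to require an unstated hoparacompactness assumption; this is an imprecision in the lemma statement rather than a flaw in your argument.
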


\begin{proof}
 We only prove the case of homotopical stacks.
 By Theorem \ref{T:nice}, we can choose universal weak equivalences
 $\psi\: Y \to \Y$ and  $h \: X \to
 \X\times_{\Y}Y$. (Notice that, by Lemma \ref{L:lim3},
 $\X\times_{\Y}Y$ is homotopical.) Set $\varphi=\pr_1\circ h$ and
 $g=\pr_1\circ h$.
\end{proof}

The following lemmas were proved implicitly in the course of proof of
Lemma \ref{L:adjoint}. We state them separately for future reference.

\begin{lem}{\label{L:12}}
  Let $\varphi \: X \to \X$ be a universal weak equivalence with $X$
  a topological space. Let $f_1,f_2 \: Y \to X$ be continuous maps
  of topological spaces such that $\varphi \circ f_1$ and
  $\varphi\circ f_2 \: Y \to \X$ are 2-isomorphic. Then, $f_1$ and
  $f_2$ are equal in the weak homotopy category $\Top_{w.e.}$ of topological
  spaces.
\end{lem}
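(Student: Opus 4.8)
The plan is to produce an explicit span (roof) in the calculus of right fractions realizing the equality $f_1 = f_2$ in $\Top_{w.e.}$, built from the given 2-isomorphism $\varepsilon \colon \varphi\circ f_1 \Rightarrow \varphi\circ f_2$. First I would recall that since $\varphi \colon X \to \X$ is a universal weak equivalence, Definition \ref{D:stackshrinkable} tells us that every base extension of $\varphi$ along a map from a topological space is a weak equivalence of spaces; this is the property that will let me pass freely between $X$ and $\X$ while staying inside $\Top_{w.e.}$. The $2$-isomorphism $\varepsilon$ is exactly a lift of the map $Y \to \X$ (namely $\varphi\circ f_1 \cong \varphi\circ f_2$) against two different sections, so the natural object to form is the fiber product that records both $f_1$ and $f_2$ as genuine lifts.

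The key construction I would carry out is the following. Form the space
$$V := Y \times_{\X} X,$$
the base extension of $\varphi$ along the composite $\varphi\circ f_1 \colon Y \to \X$ (equivalently, via $\varepsilon$, along $\varphi \circ f_2$). Let $r \colon V \to Y$ be the projection; by the universal weak equivalence property of $\varphi$, the map $r$ is a weak equivalence, hence becomes invertible in $\Top_{w.e.}$. The two maps $f_1, f_2 \colon Y \to X$ together with the 2-isomorphism $\varepsilon$ furnish two sections $s_1, s_2 \colon Y \to V$ of $r$ (each $s_i$ is the map induced by $(\id_Y, f_i)$ using $\varepsilon$ to match up the $\X$-components). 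The point is that $V$ also carries a canonical map $q \colon V \to X$ (the other projection), and by construction $q\circ s_i = f_i$ for $i=1,2$. So in $\Top_{w.e.}$ we may write $f_i = q \circ s_i = q \circ r^{-1}$, provided $s_1$ and $s_2$ both represent the inverse of $r$. Thus it suffices to show $s_1 = s_2 = r^{-1}$ in $\Top_{w.e.}$, which follows because each $s_i$ is a section of the weak equivalence $r$, and a section of a weak equivalence is its two-sided inverse in the localized category.

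I would then conclude: in $\Top_{w.e.}$,
$$f_1 = q\circ s_1 = q\circ r^{-1} = q\circ s_2 = f_2,$$
where the middle equalities use that $s_1$ and $s_2$ are both inverse to the weak equivalence $r$. Everything here lives in the calculus of right fractions for the class of weak equivalences, exactly as in the proof of Lemma \ref{L:adjoint}, so the manipulation of spans is legitimate. The step I expect to be the main obstacle is the careful bookkeeping of the two sections $s_1, s_2$ and verifying that the $2$-isomorphism $\varepsilon$ is precisely what makes both $(\id_Y,f_1)$ and $(\id_Y,f_2)$ factor through $V = Y\times_{\X}X$ as \emph{honest} sections of $r$ (rather than merely maps commuting up to $2$-isomorphism); once the universal property of the $2$-fiber product is unwound correctly, the identification $q\circ s_i = f_i$ and the fact that a section of a weak equivalence inverts it are both routine.
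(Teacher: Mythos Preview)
Your argument is correct and matches the paper's proof essentially line for line: form $Z=Y\times_{\X}X$, note that the projection $Z\to Y$ is a weak equivalence, observe that $f_1,f_2$ together with the given 2-isomorphism yield two sections $s_1,s_2$ which must both equal the inverse of that projection in $\Top_{w.e.}$, and then compose with the other projection $Z\to X$ to recover $f_1=f_2$. (Your closing remark about the calculus of right fractions is inessential and not quite applicable, since weak equivalences are not closed under base change; the argument only uses 2-out-of-3 to see that each $s_i$ is a weak equivalence and hence inverts $r$.)
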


\begin{proof}
 Let $g=\varphi\circ f_1$, and consider the  2-cartesian diagram
   $$\xymatrix@=16pt@M=8pt{   Z \ar[r]^h \ar[d]_{\psi}
                              &  X \ar[d]^{\varphi}  \\
                              Y \ar[r]_{g} & \X} $$
 where $Z=Y\times_{\X}X$. The maps $f_1$ and $f_2$ correspond to
 section $s_1,s_2 \: Y \to Z$ of $\psi$. Since $\psi$ is a weak
 equivalence, $s_1$ and $s_2$ are equal in $\Top_{w.e.}$.
 Therefore, $f_1=h\circ s_1$ and $f_2=h\circ s_2$ are also equal in
 $\Top_{w.e.}$.
\end{proof}

\begin{lem}{\label{L:122}}
  Let $\varphi \: X \to \X$ be a parashrinkable  (resp.,
  pseudoshrinkable) morphism (see Definition \ref{D:stackshrinkable})
  with $X$ a paracompact topological space (resp.
  a CW complex). Let $f_1,f_2 \: Y \to X$ be continuous maps of
  topological spaces such that $\varphi \circ f_1$ and $\varphi\circ
  f_2 \: Y \to \X$ are 2-isomorphic. Then, $f_1$ and  $f_2$ are homotopic.
\end{lem}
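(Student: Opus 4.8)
The plan is to follow the proof of Lemma \ref{L:12} almost verbatim, upgrading its conclusion from ``equal in $\Top_{w.e.}$'' to ``homotopic'' by replacing the weak-equivalence input with genuine shrinkability. Concretely, set $g=\varphi\circ f_1$ and form the 2-cartesian square with $Z=Y\times_{\X}X$, projections $\psi\:Z\to Y$ and $h\:Z\to X$. By the universal property of the fiber product, a section of $\psi$ is the same datum as a map $Y\to X$ together with a 2-isomorphism between its composite with $\varphi$ and $g$. Hence $f_1$ (equipped with the identity 2-isomorphism) and $f_2$ (equipped with the given 2-isomorphism $\varphi\circ f_2\cong g$) determine sections $s_1,s_2\:Y\to Z$ of $\psi$ with $h\circ s_i=f_i$.

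The key observation is that $\psi$ is precisely the base extension $\varphi_Y$ of $\varphi$ along $g$. Since $\varphi$ is parashrinkable (resp.\ pseudoshrinkable) and $Y$ is paracompact (resp.\ a CW complex) --- the regularity of $Y$ matching the two cases --- Definition \ref{D:stackshrinkable} (together with Definition \ref{D:retraction}) guarantees that $\psi$ is shrinkable. At this point one may simply invoke Lemma \ref{L:lift} applied to the morphism $\varphi$ and the space $Y$: the space of lifts of $g$ to $X$ is contractible, and $s_1,s_2$ are two points of this space of lifts, so they are homotopic.

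To make the final step self-contained and to exhibit the homotopy explicitly, I would instead argue directly from shrinkability. A shrinkable $\psi$ comes with a distinguished section $S\:Y\to Z$ and a fiberwise strong deformation retraction $D\:Z\times[0,1]\to Z$ of $Z$ onto $S(Y)$, with $D_0=\id_Z$ and $D_1=S\circ\psi$. For any section $s$ of $\psi$, the composite $D\circ(s\times\id)\:Y\times[0,1]\to Z$ is a homotopy from $s$ to $S\circ\psi\circ s=S$, and it is a homotopy \emph{through sections} because $D$ is fiberwise. Applying this to $s_1$ and $s_2$ gives $s_1\simeq S\simeq s_2$ through sections; composing this homotopy with $h$ then yields a homotopy $f_1=h\circ s_1\simeq h\circ s_2=f_2$, as desired.

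I expect no essential obstacle beyond two pieces of bookkeeping. First, one must track the 2-isomorphisms carefully when translating the lifts $f_1,f_2$ into honest sections $s_1,s_2$ of $\psi$. Second, one must ensure the base extension $\psi$ is genuinely shrinkable, and this is exactly where the hypothesis that $Y$ be paracompact (resp.\ a CW complex) is used: without it the parashrinkable (resp.\ pseudoshrinkable) condition does not directly produce a shrinkable $\psi$. This regularity of $Y$ should therefore be read into the statement, matching the respective cases precisely as in Lemma \ref{L:lift}.
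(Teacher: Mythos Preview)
Your approach is correct and is precisely what the paper does: its proof reads ``Copy the proof of Lemma~\ref{L:12},'' and your write-up is an explicit unpacking of that instruction, upgrading the weak-equivalence conclusion to a genuine homotopy via shrinkability of the base-changed map $\psi$. Your observation that the argument requires $Y$ (not just $X$) to be paracompact (resp.\ a CW complex) is well taken---this hypothesis is indeed needed for $\psi$ to be shrinkable and is implicitly in force in every place the paper applies the lemma.
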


\begin{proof}
  Copy the proof of Lemma \ref{L:12}.
\end{proof}

\begin{thm}{\label{T:homotopy}}
  Let $(\X,x)$ be a pointed homotopical stack. Then, one can define
  homotopy groups $\pi_n(\X,x)$ that are functorial with
  respect to pointed morphisms of stacks. When $\X$ is a Serre
  topological stack, these homotopy groups are naturally isomorphic
  to the ones defined in (\cite{Noohi}, $\S$17). That is,
  $\pi_n(\X,x)\cong[(S^n,\bullet),(\X,x)]$.
\end{thm}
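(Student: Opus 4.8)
The plan is to define $\pi_n(\X,x)$ through a classifying space, establish well-definedness and functoriality via the functorial machinery of $\S$\ref{S:Htpytype} (in the homotopical-stack form, Theorem \ref{T:weakly}), and then identify the result with the groups of (\cite{Noohi}, $\S$17) in the Serre case by invoking Corollary \ref{C:retractionwe}.

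First I would fix, using the definition of a homotopical stack (Definition \ref{D:paratop}; see also Proposition \ref{P:nice2}), a classifying space $\varphi \: X \to \X$, i.e. a universal weak equivalence from a topological space $X$. Pulling $\varphi$ back along the basepoint $x \: \bullet \to \X$ gives a weak equivalence $\bullet\times_{\X}X \to \bullet$, so the fiber $F:=\bullet\times_{\X}X$ is nonempty and weakly contractible; I choose $\tilde x \in X$ in the image of $F$, so that $\varphi(\tilde x)$ is $2$-isomorphic to $x$, and set
$$\pi_n(\X,x) := \pi_n(X,\tilde x).$$
Since $F$ is weakly contractible, any two such lifts $\tilde x,\tilde x'$ are joined by a path in $X$, giving a canonical identification of $\pi_n(X,\tilde x)$ with $\pi_n(X,\tilde x')$, so the basepoint choice is harmless.

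Next I would prove independence of the pair $(X,\varphi)$ and functoriality. Applying the functor $\Theta$ to $x \: \bullet \to \X$, and using that $\Theta$ fixes topological spaces (so $\Theta(\bullet)=\bullet$), produces a well-defined point $\Theta(x) \: \bullet \to \Theta(\X)$ in $\Top_{w.e.}$; thus $\pi_n(\X,x)=\pi_n(\Theta(\X),\Theta(x))$ depends only on the pointed weak homotopy type, which is well defined by Theorem \ref{T:weakly}. Concretely, for a pointed morphism $f \: (\X,x) \to (\Y,y)$ I would build a $2$-commutative square as in Lemma \ref{L:square}, with top arrow $g \: X \to Y$ between classifying spaces; $g$ induces the map $f_*$ on $\pi_n$ after matching basepoints through the relevant weakly contractible fibers, and Lemma \ref{L:12} guarantees that this induced map is independent of all lifting choices. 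Hence $f \mapsto f_*$ is a well-defined functor.

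Finally, the comparison with (\cite{Noohi}, $\S$17) for $\X$ Serre is the crux. I would take a Serre groupoid presentation $\bbX=[X_1\sst{}X_0]$ of $\X$ and use the specific classifying space $X=B\bbX$ of Theorem \ref{T:nice}; this is legitimate because $\pi_n(\X,x)$ is independent of the classifying space by the previous step. The atlas $\varphi \: B\bbX \to \X$ is locally shrinkable, hence pseudoshrinkable by Lemma \ref{L:shrinkable}. A topological space is a Serre topological stack (its trivial groupoid has identity source and target maps, which are local Serre fibrations), and $\X$ is Serre by hypothesis, so $\varphi$ is a pseudoshrinkable morphism of Serre topological stacks. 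Corollary \ref{C:retractionwe} then shows $\varphi$ induces isomorphisms on all homotopy groups in the sense of (\cite{Noohi}, $\S$17):
$$\varphi_* \: [(S^n,\bullet),(B\bbX,\tilde x)] \risom [(S^n,\bullet),(\X,x)].$$
For the space $B\bbX$ the left-hand group is, by definition, the ordinary $\pi_n(B\bbX,\tilde x)=\pi_n(\X,x)$, since pointed stack morphisms $(S^n,\bullet)\to(B\bbX,\tilde x)$ are just ordinary pointed continuous maps and $[-]$ is ordinary pointed homotopy. This yields $\pi_n(\X,x)\cong[(S^n,\bullet),(\X,x)]$. The main obstacle is the based bookkeeping in the first two paragraphs: pinning the basepoint lift down canonically and verifying, through Lemma \ref{L:12}, that the maps $f_*$ are truly independent of the contractible space of choices, so that functoriality holds on the nose. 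The Serre comparison itself then reduces to the single observation that a space is a Serre stack, after which Corollary \ref{C:retractionwe} applies directly.
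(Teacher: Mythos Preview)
Your proposal is correct and follows essentially the same route as the paper's proof: define $\pi_n(\X,x):=\pi_n(X,\tilde x)$ via a classifying space, use the weakly contractible fiber to handle basepoint independence, invoke Lemmas \ref{L:square} and \ref{L:12} for chart-independence and functoriality, and apply Corollary \ref{C:retractionwe} for the Serre comparison. You are slightly more explicit than the paper in two places---you spell out that a topological space is itself a Serre stack (so that Corollary \ref{C:retractionwe} applies to $\varphi$), and you note that one should use the specific atlas $B\bbX\to\X$ of Theorem \ref{T:nice} to guarantee $\varphi$ is pseudoshrinkable---but these are clarifications of the paper's argument rather than a different approach.
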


\begin{proof}
 Let $(\X,x)$ be a pointed homotopical stack. Choose a universal
 weak equivalence $\varphi \: X \to \X$. Pick a point $\tilde{x} \in
 X$ sitting above $x$. (This means, a map $\tilde{x} \: \bullet \to
 X$, together with a 2-morphism $\alpha \: x \Rightarrow
 p\circ\tilde{x}$, which we usually suppress from the notation for
 convenience.) For $n\geq0$, we define $\pi_n(\X,x):=\pi_n(X,\tilde{x})$.

 Let us see why this definition is
 independent of the choice of $\tilde{x}$. Let $\tilde{x}' \in X$ be
 another point above $x$. Let $F=\bullet\times_{\X}X$ be the fiber of
 $\varphi$ over $x$. The map $F \to \bullet$, being the base
 extension of $\varphi$, is   a weak homotopy
 equivalence. This means that $F$ is a
 weakly contractible topological space. The lifts $\tilde{x}$ and
 $\tilde{x}'$ of $x$ correspond to points $\bar{x}$ and $\bar{x}'$ in
 $F$. Since $F$ is weakly contractible, there is a  path $\gamma$,  unique up
 to homotopy, joining $\bar{x}$ and $\bar{x}$. Taking the image of
 $\gamma$ in $X$ we find a natural path joining
 $\tilde{x}$ and $\tilde{x}'$. This path  defines a natural
 isomorphism $\pi_n(X,\tilde{x})\risom \pi_n(X,\tilde{x}')$.

 We will leave it to the reader to verify that $\pi_n(\X,x)$  is also
 independent of the chart $\varphi \: X \to \X$ and that it is functorial.
 The proof makes use of the Lemmas \ref{L:square} and \ref{L:12}.

 In the case where $\X$ is Serre topological, it follows from
 Corollary \ref{C:retractionwe} that the homotopy groups
 defined above are naturally isomorphic to the ones defined in
 (\cite{Noohi}, $\S$17).
\end{proof}

\section{(Co)homology theories for homotopical
stacks}{\label{S:Cohomology}}

By virtue of Theorem \ref{T:weakly} we can do algebraic topology on
homotopical stacks by transporting things
back and forth between a stack and its classifying space.
In this section we use this idea to show how we can extend 
generalized (co)homology  theories to stacks.

\medskip \noindent{\bf Fact.} Let $h$ be a (co)homology theory on the
category of topological spaces which
is invariant under weak equivalences. Then $h$ extends naturally to
the category of homotopical stacks. If $h$ is only invariant under homotopy
equivalences,\footnote{Usually, (co)homology theories of
\v{C}ech type, or certain sheaf cohomologies, are only invariant
under homotopy equivalences.}  then $h$ extends naturally to the category of
\hpara\ paratopological stacks (Definition \ref{D:hpara}).

\medskip

Let us show, for example, how to define $h^*(\X,\A)$ for a pair
$(\X,\A)$ of homotopical stacks. (In $\S$\ref{S:Diagram} we will
discuss in detail how to define homotopy types of small diagrams of
stacks.) From now on, we will assume that $h$ is contravariant, and
denote it by $h^*$. Everything we say will be valid for a homology
theory as well.

Pick a universal weak equivalence $\varphi \: X \to \X$ , and set
$A:=\varphi^{-1}\A \subseteq X$. Define $h^*(\X,\A):=h^*(X,A)$. This
definition is independent, up to a natural isomorphism, of the
choice of $\varphi$. To see this, let $\varphi' \: X' \to \X$ be
another universal weak equivalence, and form the fiber product
$\varphi'' \: X'' \to \X$.
    $$\xymatrix@=16pt@M=8pt{  (X'',A'')  \ar[r]^p \ar[d]_{q}
                              &  (X,A) \ar[d]^{\varphi}  \\
                             (X',A') \ar[r]_{\varphi'} & (\X,\A)}$$

Since $p$ and $q$ are weak equivalences of pairs, it follows that
there are natural isomorphisms $h^*(X',A')\cong h^*(X'',A'')\cong
h^*(X,A)$.

Covariance of $h^*$ with respect to morphisms of pairs of
homotopical stacks follows from Lemmas \ref{L:square} and
\ref{L:12}.

For more or less trivial reasons, the resulting  cohomology theory
on the category of homotopical stacks will maintain  all the
reasonable (read functorial) properties/structures   that it has
with topological spaces (e.g., excision, long exact sequence for
pairs, Mayor-Vietoris, products, etc.). Homotopic morphisms (in
particular, 2-isomorphic morphisms) induce the same map on
cohomology groups.

The proof of all of these follows by same line of argument: choose a
universal weak equivalence $\varphi \: X \to \X$, verify the desired
property/structure on $X$, and then use Lemmas \ref{L:square} and
\ref{L:12} to show that the resulting property/structure on
$h^*(\X)$ is independent of the choice of $\varphi$ and is
functorial.

\begin{rem}{\label{R:equivariant}}
 In the case where $h$ is singular (co)homolgoy, what we constructed
 above coincides with the one constructed by Behrend in \cite{Behrend}
 (we will not give the proof of this here).
 When $\X=[X/G]$ is the quotient stack of a topological group action,
 the discussion of $\S$\ref{SS:group} shows the cohomology theories
 defined above coincide with the corresponding
 $G$-equivariant theories constructed via the Borel construction.
\end{rem}

\subsection{(Co)homology theories that are only homotopy invariant}
{\label{SS:homotopyinv}}

There are certain (co)homology theories that are only invariant
under homotopy equivalences. Among these are certain sheaf
cohomology theories or cohomology theories defined via a \v{C}ech
procedure. Such (co)homology theories do not, a priori, extend to
topological stacks because they are not invariant under weak
equivalences. They do, however, extend to hoparacompact 
paratopological stacks $\X$.

The same argument that
we used in the previous subsection applies here, more or less word
by word, as long as we choose the classifying space $X$  of $\X$
to be paracompact and $\varphi\: X \to \X$ to be
parashrinkable. For instance, the reader can easily verify that,
under these assumptions, the morphisms $p$ and $q$ in the
commutative square of the previous subsection will be homotopy
equivalences. This guarantees that $h^*(\X,\A)$ is well-defined. To
prove functoriality one makes use of Lemmas \ref{L:square} and
\ref{L:122}.

\begin{rem}
  The above discussion remains true if we replace the category of
  \hpara\ paratopological stacks by the category
  of {\em \hcw\ pseudotopological stacks} (see Remark
  \ref{R:pseudo}).
\end{rem}

\subsection{A remark on supports}

The notion of {\em supports} for a (co)homology theory can
sometimes be extended to the stack setting. The following result
will not be used elsewhere in the paper, but we include it to
illustrate the idea.

Let us say that a  homology theory $h$ on topological spaces is
(para)compactly supported if for every topological space $X$ the map
    $$\underset{K\to X}{\limdir} h_*(K) \to h_*(X)$$
is an isomorphism. Here, the limit is taken over all maps $K \to X$
with $K$  (para)compact.  
For example, singular homology is  compactly supported.

\begin{prop}{\label{P:support}}
  Let $h$ be a (para)compactly supported homology theory. Then, for every
  paratopological stack $\X$, we have a natural isomorphism
      $$\underset{K\to \X}{\limdir} h_*(K) \risom h_*(\X),$$
  where the limit is taken over all maps $K \to \X$
  with $K$ a (para)compact topological space.
\end{prop}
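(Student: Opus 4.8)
The plan is to fix a classifying space and reduce the colimit over $(\text{para})$compact spaces over $\X$ to the corresponding colimit over $(\text{para})$compact spaces over an honest classifying space $X$, where the support hypothesis applies directly. Choose a universal weak equivalence $\varphi \: X \to \X$ (Proposition \ref{P:nice2}), so that by definition $h_*(\X)=h_*(X)$. Write $\mathcal{I}$ for the category of maps $K \to \X$ with $K$ a $(\text{para})$compact topological space, and $\mathcal{J}$ for the category of maps $K' \to X$ with $K'$ a $(\text{para})$compact topological space. The map to be shown invertible is the canonical
$$\Phi \: \limdir_{K \to \X} h_*(K) \lra h_*(\X),$$
assembled from the functorial maps $h_*(K) \to h_*(\X)$ induced by $K \to \X$. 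Postcomposition with $\varphi$ defines a functor $F \: \mathcal{J} \to \mathcal{I}$, and on colimits a map $F_* \: \limdir_{K' \to X} h_*(K') \to \limdir_{K \to \X} h_*(K)$.

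Since $X$ is a topological space and $h$ is $(\text{para})$compactly supported, the comparison map $\Psi \: \limdir_{K' \to X} h_*(K') \risom h_*(X)$ is an isomorphism. Under the identification $h_*(\X)=h_*(X)$ and ordinary functoriality on spaces one checks $\Phi\circ F_*=\Psi$. In particular $\Phi$ is surjective, and once we know $F_*$ is surjective it follows that $F_*$ is injective as well (being a factor of the injective $\Psi$), hence an isomorphism; then $\Phi=\Psi\circ F_*^{-1}$ is an isomorphism too. Thus everything reduces to the surjectivity of $F_*$.

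To prove $F_*$ surjective, fix an object $K \to \X$ of $\mathcal{I}$ and a class $\xi\in h_*(K)$, and form the $2$-cartesian square defining $\tilde K := K\times_{\X}X$. As $\X$ is paratopological its diagonal is representable, so $\tilde K$ is a genuine topological space; and as $\varphi$ is a universal weak equivalence the projection $\pi \: \tilde K \to K$ is a weak equivalence, so $\xi=\pi_*\tilde\xi$ for a unique $\tilde\xi\in h_*(\tilde K)$. The other projection gives a map $\tilde K \to X$ of topological spaces; applying the $(\text{para})$compact support property to the \emph{space} $\tilde K$, we find that $\tilde\xi$ is the image of some $\eta\in h_*(K')$ for a $(\text{para})$compact $K'\to\tilde K$. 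Then $K'\to\tilde K\to X$ is an object of $\mathcal{J}$, and by the $2$-cartesian property the composite $K'\to\tilde K\to K\to\X$ is $2$-isomorphic to $F(K'\to X)$. Since $\eta\mapsto\tilde\xi\mapsto\xi$ along $K'\to\tilde K\to K$, the classes of $\eta$ and $\xi$ agree in $\limdir_{K\to\X}h_*(K)$; as the class of $\eta$ lies in the image of $F_*$, so does that of $\xi$.

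The one genuinely substantive point is that the pullback $\tilde K=K\times_{\X}X$ need not be $(\text{para})$compact, so it cannot itself serve as an object of $\mathcal{J}$; the support hypothesis applied to the honest space $\tilde K$ is precisely the tool that lets one descend $\tilde\xi$ to a $(\text{para})$compact $K'$ lying over $X$. The remaining verifications---that $\mathcal{I}$ is filtered so that the colimit is computed as expected, and that the identifications $\Phi\circ F_*=\Psi$ and $\eta\mapsto\xi$ are natural---are routine consequences of the functoriality set up in $\S\ref{S:Cohomology}$.
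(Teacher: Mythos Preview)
Your argument is correct, but it takes a genuinely different route from the paper's. The paper exploits the full strength of Proposition~\ref{P:nice2}: the map $\varphi\:X\to\X$ is \emph{parashrinkable}, not merely a universal weak equivalence. By Lemma~\ref{L:lift}, every morphism $K\to\X$ from a (para)compact space $K$ then admits a lift $K\to X$, unique up to homotopy. This gives a direct identification of the indexing categories $\mathcal{I}$ and $\mathcal{J}$ up to homotopy (the functor $F$ is essentially surjective, with homotopy-unique preimages), so the two colimits coincide without any element-chasing and with only one invocation of the support hypothesis, on $X$.

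Your approach, by contrast, uses only the universal weak equivalence property of $\varphi$. You compensate by forming the pullback $\tilde K=K\times_{\X}X$, transporting the class along the weak equivalence $\tilde K\to K$, and then applying the support hypothesis a \emph{second} time, now to the honest space $\tilde K$, to descend to a (para)compact $K'$. This works, and has the mild conceptual advantage that it would go through for any classifying space in the sense of Definition~\ref{D:classifying} (i.e., for homotopical stacks generally), whereas the paper's argument genuinely needs parashrinkability and hence the paratopological hypothesis. The cost is a longer argument and a less transparent reason for the isomorphism: the paper's lifting property explains structurally why the two indexing categories are interchangeable, while your proof establishes the isomorphism by chasing generators through the colimit.
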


\begin{proof}
  Choose a parashrinkable morphism $\varphi \: X \to \X$ and use the
  fact that every morphism $K \to \X$ from a paracompact topological
  space $K$ has a lift, unique up to homotopy, to $X$; see Lemma \ref{L:lift}.
\end{proof}

\section{Classifying spaces for diagrams of topological
stacks}{\label{S:Diagram}}

In this section, we prove a diagram version of Theorem
\ref{T:weakly}. We show  (Theorem \ref{T:diagramadjoint}) that to every 
small diagram of topological stacks (with a certain condition on the shape 
of the diagram) one can associate a  diagram of classifying topological 
spaces which is well-defined up to (objectwise) weak equivalence
of diagrams.  Theorem \ref{T:diagramadjoint} is actually formulated
in a way that it implies versions of the above statement for
topological, homotopical, paratopological, and pseudotopological
stacks.

 Let $\sfD$ be a
category (which we will think of as a diagram). In what follows, we
will assume  that $\sfC$ and $R$ are any of the following pairs:
\begin{itemize}
  \item[$\mathbf{1}$)] $\mfC$ is $\TopSt$ and $R$ is the class of locally shrinkable
  maps of topological spaces;
  \item[$\mathbf{2}$)]  $\mfC$ is $\HoSt$ and $R$ is the class of universal weak equivalences
   of topological spaces;
  \item[$\mathbf{3}$)] $\mfC$ is $\ParSt$ and $R$ is the class of parashrinkable
  maps of topological spaces;
  \item[$\mathbf{4}$)]  $\mfC$ is $\PsSt$ and $R$ is the class of pseudoshrinkable
  maps of topological spaces.
\end{itemize}

In the first two cases, assume in addition that the category $\sfD$
has the property that for every object $d$ in $\sfD$ there are only
finitely many arrows coming out of $d$. Lemma \ref{L:prod} now
guarantees that in all  four cases at least one of the two
conditions ($\mathbf{A}$) or ($\mathbf{B}$) of $\S$\ref{SS:diagram}
is satisfied.

 By the notation of $\S$\ref{SS:diagram},
$\tilde{R}$ stands for the class of representable morphisms of
stacks which are locally shrinkable, universal weak equivalence,
parashrinkable, or pseudoshrinkable (depending on which pair
$\mathbf{1}$-$\mathbf{4}$ we are considering).

Recall that $\mfC^{\sfD}$ stands for the category of lax functors
$\sfD \to \sfC$. A morphism $\tau$ in $\mfC^{\sfD}$ is a natural
transformation of functors.

\begin{thm}{\label{T:diagramadjoint}}  
  Let $\sfD$ be a small category, and let $\mfC$ and $R$ be as above.
  Let $T$ (resp., $\tilde{T}$) be the class of all
  transformations $\tau$ in  $\mfC^{\sfD}$ which have
  the property that for every $d \in \sfD$ the corresponding
  morphism $\tau_d$ in $\mfC$ is in $R$ (resp., $\tilde{R}$). Then,
  the  inclusion functor $\Top^{\sfD} \to \mfC^{\sfD}$ induces a
  fully faithful functor $\iota^{\sfD} \: T^{-1}\Top^{\sfD} \to
  T^{-1}[\mfC^{\sfD}]$,  and $\iota^{\sfD}$ has  a right adjoint
  $\Theta^{\sfD}$. Furthermore, $\Theta^{\sfD}$ can be defined so
  that the counits of adjunction are the identity transformations
  and the units of adjunction  are  honest transformations in
  $\tilde{T}$.
\end{thm}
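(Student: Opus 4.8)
The statement is a diagram-level analogue of Lemma~\ref{L:adjoint} and Proposition~\ref{P:univwe}, and the whole point of the earlier Lemma~\ref{L:diagramadjoint} was to isolate exactly the categorical input needed. So my plan is to invoke Lemma~\ref{L:diagramadjoint} with the roles $\mfC$, $\sfB$, $R$, $\tilde R$ filled in as specified in the four cases. The one genuine hypothesis of Lemma~\ref{L:diagramadjoint} that must be supplied is: for every diagram $p\:\sfD\to\mfC$ and every $d\in\sfD$, there must exist an object $\Theta(x_d)\in\sfB=\Top$ together with a morphism $\varphi_d\:\Theta(x_d)\to x_d$ in $\tilde R$ (this is the per-object existence statement feeding the relative Kan extension). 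In cases $\mathbf 1$--$\mathbf 2$ this existence is exactly Theorem~\ref{T:nice} (a locally shrinkable, hence universal weak equivalence, atlas); in cases $\mathbf 3$--$\mathbf 4$ it is Proposition~\ref{P:nice2} (a parashrinkable resp.\ pseudoshrinkable morphism from a space). Thus the substantive geometric content has already been proved, and what remains is to check the bookkeeping hypotheses of Lemma~\ref{L:diagramadjoint}.

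\smallskip

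\textbf{The bookkeeping to verify.} Lemma~\ref{L:diagramadjoint} requires $\sfB$ to be a full subcategory of $\mfC$ closed under fiber products and a $1$-category, $R$ a class in $\sfB$ containing identities and closed under base extension, $2$-isomorphism, and \emph{fiber products}, and finally one of the shape conditions $(\mathbf A)$ or $(\mathbf B)$ on $\sfD$. Here $\sfB=\Top\subset\mfC$ is full and closed under fiber products in each of $\TopSt,\HoSt,\ParSt,\PsSt$, and is a genuine $1$-category. The closure of $R$ under fiber products is precisely the content of Lemma~\ref{L:prod}: for a \emph{finite} family, locally shrinkable and universal weak equivalence maps are closed under fiber product, while for an \emph{arbitrary} family, parashrinkable and pseudoshrinkable maps are. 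This is the reason for the case split on $\sfD$: in cases $\mathbf 1$--$\mathbf 2$ we restrict to diagrams of finite out-degree so that condition $(\mathbf A)$ holds (finite products suffice), whereas in cases $\mathbf 3$--$\mathbf 4$ the stronger closure under arbitrary fiber products gives condition $(\mathbf B)$ for free. I would state this case split explicitly and cite Lemma~\ref{L:prod} as the verification of $(\mathbf A)$/$(\mathbf B)$, exactly as flagged in the paragraph preceding the theorem.

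\smallskip

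\textbf{Conclusion and the main obstacle.} Once these hypotheses are checked, Lemma~\ref{L:diagramadjoint} delivers verbatim the fully faithful $\iota^{\sfD}\:T^{-1}\Top^{\sfD}\to T^{-1}[\mfC^{\sfD}]$, its right adjoint $\Theta^{\sfD}$, and the assertion that $\Theta^{\sfD}$ can be arranged with identity counits and units in $\tilde T$. The proof is therefore short: set up the four dictionaries $(\mfC,R,\tilde R)$, verify the closure-under-fiber-products hypothesis via Lemma~\ref{L:prod} (handling finite vs.\ arbitrary as above), supply the per-object $\varphi_d\in\tilde R$ via Theorem~\ref{T:nice} or Proposition~\ref{P:nice2}, and then quote Lemma~\ref{L:diagramadjoint}. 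The main subtlety I would watch is not a deep obstacle but a matching issue: one must confirm that the class $\tilde R$ produced by the abstract construction of $\S$\ref{SS:diagram} coincides, in each concrete case, with the intended geometric class (representable locally shrinkable, universal weak equivalence, parashrinkable, or pseudoshrinkable morphisms of stacks). This identification is essentially the content of the discussion around Proposition~\ref{P:univwe} and Definition~\ref{D:stackshrinkable}, together with the base-change stability recorded in Lemma~\ref{L:epibasechange0}; I would note it rather than belabor it, since the definition of $\tilde R$ as the base-extension-closure of $R$ matches Definition~\ref{D:stackshrinkable} on the nose.
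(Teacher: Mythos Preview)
Your proposal is correct and follows essentially the same route as the paper. The paper's proof is a single line (``Let $\sfB=\Top$ and use Lemma~\ref{L:diagramadjoint}''), because all the verification you spell out---the four dictionaries, the finite out-degree restriction in cases $\mathbf{1}$--$\mathbf{2}$, the closure under fiber products via Lemma~\ref{L:prod}, and the identification of $\tilde R$---is already recorded in the preamble immediately preceding the theorem statement; your proposal simply makes explicit what that preamble and the one-line proof together assert.
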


\begin{proof}
  Let $\sfB=\Top$ and use Lemma \ref{L:diagramadjoint}.
\end{proof}

Let $P\: \sfD \to \mfC$ be a diagram of stacks in $\mfC$. The
diagram $\Theta^{\sfD}(P) \: \sfD \to \Top$ should be regraded as
the {\bf weak homotopy type} of $P$. The transformation $\varphi \:
\Theta^{\sfD}(P) \Rightarrow P$ allows one to relate the homotopical
information in the diagram $P$ to the homotopical information in its
homotopy type $\Theta^{\sfD}(P)$. Notice that $\varphi$ is an
objectwise universal weak equivalence.

The following propositions say that the classifying space functor $\Theta \:
R^{-1}\PsSt \to R^{-1}\Top$ of Theorem \ref{T:weakly},  can be
lifted to a functor to $\Top$ if we restrict it to a small sub
2-category  $\mfS$.

\begin{prop}{\label{P:honest1}}
  Let $\mfS$ be a small sub 2-category of the 2-category $\PsSt$ of
  pseudotopological stacks, and denote the inclusion functor by $i_{\mfS}$.
  Identify $\Top$ with a subcategory of
  $\PsSt$. Then, there is a functor $\Theta_{\mfS} \: \mfS \to \Top$
  and a transformation $\varphi_{\mfS} \: \Theta_{\mfS} \Rightarrow
  i_{\mfS}$ such that for every $\X$ in $\mfS$, $\varphi_{\mfS}(\X) \:
  \Theta_{\mfS}(\X) \to \X$ is pseudoshrinkable (in particular, a
  universal weak equivalence). In the case where $\mfS$ sits inside
  $\ParSt$, $\Theta_{\mfS}$ and $\varphi_{\mfS}$ can be chosen so that
  $\varphi_{\mfS}(\X)$ are parashrinkable.
\end{prop}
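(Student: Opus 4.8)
The plan is to recognize this proposition as the concrete instance of Corollary \ref{C:honest} obtained by taking $\mfC$ to be $\PsSt$ (resp.\ $\ParSt$), $\sfB$ to be $\Top$, and $R$ to be the class of pseudoshrinkable (resp.\ parashrinkable) maps of topological spaces, exactly as in cases $\mathbf{4}$ (resp.\ $\mathbf{3}$) of the standing setup for this section. Accordingly, I would set $\sfD := [\mfS]$, regard it as a small diagram category, and view $p := [i_{\mfS}] \: \sfD \to [\mfC]$ as a $\sfD$-diagram in $[\mfC]$. Corollary \ref{C:honest} (extracted in turn from Lemma \ref{L:diagramadjoint}) then produces a functor $\Theta^{\sfD}([i_{\mfS}])$ together with a natural transformation $\varphi_{[i_{\mfS}]}$ whose components lie in $\tilde{R}$; precomposing with the projection $\mfS \to [\mfS]$ gives the desired $\Theta_{\mfS}$ and $\varphi_{\mfS}$. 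Since here $\tilde{R}$ is precisely the class of representable pseudoshrinkable (resp.\ parashrinkable) morphisms of stacks, the components $\varphi_{\mfS}(\X) \: \Theta_{\mfS}(\X) \to \X$ automatically acquire the asserted property.

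The substance of the argument lies in checking that the abstract hypotheses of the machinery of $\S$\ref{S:Lemmas} hold in this concrete setting, so I would verify them one at a time. First, $\mfC$ has fiber products by Lemma \ref{L:lim3}, and $\Top$ is a full subcategory closed under fiber products. Second, the class $R$ contains the identities, is closed under base extension by Lemma \ref{L:epibasechange0}, and does not distinguish $2$-isomorphic morphisms, hence is closed under $2$-isomorphism as required. Third---and this is the point that allows $\mfS$ to be an arbitrary small sub $2$-category rather than one subject to a finiteness condition on its shape---Lemma \ref{L:prod} shows that an arbitrary (not merely finite) fiber product of pseudoshrinkable (resp.\ parashrinkable) maps is again pseudoshrinkable (resp.\ parashrinkable). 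Thus $R$ is closed under arbitrary fiber products, so condition $(\mathbf{B})$ of $\S$\ref{SS:diagram} is satisfied for $\sfD = [\mfS]$, and no bound on the degrees of the objects of $\mfS$ is needed.

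Finally, the existence hypothesis of Lemma \ref{L:adjoint}---that every object of $\mfC$ admits a map in $\tilde{R}$ from a topological space---is exactly the content of Proposition \ref{P:nice2}, which furnishes, for each pseudotopological (resp.\ paratopological) stack $\X$, a pseudoshrinkable (resp.\ parashrinkable) morphism $\varphi \: X \to \X$ from a topological space. With all hypotheses in place, Corollary \ref{C:honest} applies verbatim and yields the proposition. I expect the only delicate point to be the bookkeeping around closure under \emph{arbitrary} fiber products: this is what distinguishes the present cases ($\ParSt$, $\PsSt$) from cases $\mathbf{1}$ and $\mathbf{2}$, where only finite fiber products are available and a finiteness restriction on the diagram had to be imposed. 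Everything else is a routine transcription of the general lemmas.
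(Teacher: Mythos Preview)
Your proposal is correct and follows exactly the paper's approach: the paper's proof is the single line ``Follows from Corollary~\ref{C:honest},'' and you have simply unpacked the verification of its hypotheses using the standing setup (cases $\mathbf{3}$ and $\mathbf{4}$) together with Lemma~\ref{L:prod} and Proposition~\ref{P:nice2}.
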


\begin{proof}
  Follows from Corollary \ref{C:honest}.
\end{proof}

\begin{prop}{\label{P:honest2}}
  Let $\mfS$ be a small sub 2-category of the 2-category $\HoSt$
  of homotopical stacks. Identify $\Top$ with a subcategory
  of $\HoSt$. Assume that $\mfS$ has the property that for every
  stack $\X$ in $\mfS$ there are only finitely many $\Y$ in $\mfS$
  to which there is a morphism from $\X$. Then, there is a functor
  $\Theta_{\mfS} \: \mfS \to \Top$ and a transformation $\varphi_{\mfS} \:
  \Theta_{\mfS} \Rightarrow \id_{\mfS}$ such that for every $\X$ in $\mfS$,
  $\varphi_{\mfS}(\X) \: \Theta_{\mfS}(\X) \to \X$ is an atlas for $\X$ which is a
  universal weak equivalence. In the case where $\mfS$ sits inside
  $\TopSt$, $\Theta_{\mfS}$ and $\varphi_{\mfS}$ can be chosen so that
  $\varphi_{\mfS}(\X)$ are  locally shrinkable.
\end{prop}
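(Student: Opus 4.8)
The plan is to obtain this statement, exactly as Proposition \ref{P:honest1}, as a direct consequence of Corollary \ref{C:honest}, applied to the pair $(\mfC,R)$ of case $(\mathbf{2})$ in the list preceding Theorem \ref{T:diagramadjoint}. That is, I would take $\mfC=\HoSt$, $\sfB=\Top$, and let $R$ be the class of universal weak equivalences of topological spaces, so that $\tilde R$ becomes the class of representable universal weak equivalences of homotopical stacks. Corollary \ref{C:honest} then produces a functor $\Theta_{\mfS}\:\mfS\to\Top$ together with a transformation $\varphi_{\mfS}\:\Theta_{\mfS}\Rightarrow i_{\mfS}$ each of whose components $\varphi_{\mfS}(\X)\:\Theta_{\mfS}(\X)\to\X$ lies in $\tilde R$, i.e.\ is a representable universal weak equivalence from a space; this is the desired conclusion. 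For the refinement when $\mfS\subset\TopSt$, I would instead run the same argument with case $(\mathbf{1})$, taking $R$ to be the class of locally shrinkable maps; then the components $\varphi_{\mfS}(\X)$ land in $\tilde R=$ the representable locally shrinkable morphisms, which by Lemma \ref{L:shrinkable} are epimorphisms and hence genuine atlases.

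To legitimately invoke Corollary \ref{C:honest} I must check that the standing hypotheses of $\S$\ref{SS:diagram} hold for this choice of $(\mfC,\sfB,R)$, with the diagram category taken to be $\sfD=[\mfS]$. Two points need verification. First, for every object of $\mfC$ there must exist a chart from $\sfB$ lying in $\tilde R$: for a homotopical stack $\X$ this is precisely the content of Proposition \ref{P:nice2} (which yields a pseudoshrinkable, hence universally weakly equivalent, map $X\to\X$ from a space), and in the $\TopSt$ case it is Theorem \ref{T:nice} (which yields a locally shrinkable atlas). Second, $R$ must contain the identities and be closed under base extension, $2$-isomorphism, and the relevant products; closure under base extension and $2$-isomorphism is immediate, while Lemma \ref{L:prod} gives closure under \emph{finite} fiber products.

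The main obstacle, and the only substantive difference from Proposition \ref{P:honest1}, is this product-closure requirement. In cases $(\mathbf{1})$ and $(\mathbf{2})$ the class $R$ is known to be closed only under \emph{finite} products (Lemma \ref{L:prod}), so condition $(\mathbf{B})$ of $\S$\ref{SS:diagram} is unavailable and I must instead establish condition $(\mathbf{A})$: every object of $\sfD=[\mfS]$ has finite degree, i.e.\ only finitely many arrows emanate from it. This is exactly the role of the finiteness hypothesis on $\mfS$; it is imposed precisely to force the degree of each object of $[\mfS]$ to be finite, bounding for each $\X$ the stacks $\Y$ of $\mfS$ that receive a morphism from $\X$ and hence the morphism classes out of $\X$. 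With condition $(\mathbf{A})$ in hand the hypotheses of Lemma \ref{L:diagramadjoint}, and therefore of Corollary \ref{C:honest}, are met, and the proposition follows. It is precisely because pseudoshrinkable and parashrinkable maps are closed under \emph{arbitrary} fiber products---so that condition $(\mathbf{B})$ holds automatically---that Proposition \ref{P:honest1} requires no finiteness assumption, whereas here, working with mere universal weak equivalences (or locally shrinkable maps), the finiteness hypothesis on $\mfS$ cannot be dropped.
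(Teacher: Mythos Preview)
Your strategy is exactly the paper's: the proof there reads simply ``Follows from Corollary~\ref{C:honest},'' and your elaboration correctly identifies that one must use case~($\mathbf{1}$) or~($\mathbf{2}$) from the list before Theorem~\ref{T:diagramadjoint}, that the existence of charts in $\tilde R$ is supplied by Theorem~\ref{T:nice} (resp.\ the definition of homotopical stack together with Proposition~\ref{P:nice2}), and that the finiteness hypothesis is there to secure condition~($\mathbf{A}$) of \S\ref{SS:diagram}, since Lemma~\ref{L:prod} only gives closure of $R$ under \emph{finite} fiber products in these two cases.

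There is, however, one genuine wrinkle in your verification of condition~($\mathbf{A}$). That condition asks that every object of $\sfD=[\mfS]$ have only finitely many \emph{arrows} emanating from it, whereas the hypothesis in the proposition only bounds the number of \emph{target objects} $\Y$ receiving a morphism from $\X$. Your phrase ``bounding\dots the stacks $\Y$\dots and hence the morphism classes out of $\X$'' elides this distinction: even with finitely many targets, each $\Hom_{[\mfS]}(\X,\Y)$ could be infinite, and then the Kan-extension product in the proof of Lemma~\ref{L:diagramadjoint} (indexed by the discrete comma category $(d\!\downarrow\!\sfE)$, i.e.\ by \emph{all} arrows out of $d$) would be an infinite one. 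This looks like a slight imprecision in the paper's own formulation rather than a flaw in your approach; the intended hypothesis is presumably that each object of $[\mfS]$ has finite out-degree in the literal sense of condition~($\mathbf{A}$).
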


\begin{proof}
  Follows from Corollary \ref{C:honest}.
\end{proof}

\subsection{Homotopy types of special diagrams}{\label{SS:special}}

The weak homotopy type of a diagram $\{\X_d\}$ of stacks can be
constructed more easily if we assume that: 1) our diagram category
$\sfD$ has a final object $\star$, 2) the morphisms in the diagram
are representable. For this, choose a locally shrinkable
(parashrinkable, pseudoshrinkable, or a universal equivalence,
depending on which class of stacks we are working with) map
$X_{\star} \to \X_{\star}$, and define $X_d$, $d \in \sfD$, simply
by base extending $X_{\star}$ along the  the morphism $\X_d \to
\X_{\star}$, as in the diagram
      $$\xymatrix@=12pt@M=8pt{  X_d \ar[d] \ar[r] &  X_{\star}\ar[d] \\
           \X_d  \ar[r] &  \X_*  }$$

This construction of the weak homotopy type of a diagram has certain
advantages over the general construction of the previous subsection.
Suppose that every morphism $f$ in $\sfD$ is labeled by a property
$\mathbf{P}_f$ of morphisms of topological spaces  which is
invariant under base change. (Note that such property can then be
extended to representable morphisms of stacks.) Then, it is obvious
that if the morphisms $f$ in a diagram  $\{\X_d\}$ have the
properties $\mathbf{P}_f$, then so will the corresponding morphisms
in the diagram
 $\{X_d\}$.

\begin{ex}
  Let $\sfD=\{1\to2\}$, and assume the label assigned to the unique
  morphism in $\sfD$ is `closed immersion'. Then, it follows that
  every closed pair
  $(\X,\A)$ of  topological stacks has the weak homotopy type of a
  closed pair $(X,A)$ of topological spaces.
  Furthermore, there is a morphism of pairs $\varphi \: (X,A) \to (\X,\A)$
  which is a universal weak equivalence on both terms. This is
  essentially what we discussed in $\S$\ref{S:Cohomology}.

  In the case where $\A$ is a point, $(X,A)$
  will be a pair with $A$  weakly contractible. Therefore, we can define
  $\pi_n(\X,x):=\pi_n(X,A)$. This is exactly what we discussed in
  $\S$\ref{S:Homotopy}.
\end{ex}

\section{Appendix I: Calculus of right fractions}{\label{A:Calculus}}

Let $\sfC$ be a category.  Let $R$ be a class of morphisms in $\sfC$
which contains all identity morphisms and is closed under
composition and base extension. The localized category $R^{-1}\sfC$
can be calculated using a {\em  calculus of right fractions}, as we
see shortly. Our setting is slightly different from that of  Gabriel-Zisman
 (\oldcite{GaZi}, $\S$2.2) in that their condition (d) may
not be satisfied in our case. We are, however, making a stronger
assumption that $R$ is closed under base extension.

Let $R^{-1}\sfC$ be a category with the same set of objects as
$\sfC$. The morphisms in $R^{-1}\sfC$ are defined as follows. A
morphism  from $X$ to $Y$ is presented by a span $(r,g)$
     $$\xymatrix@=12pt@M=8pt{ & T \ar[ld]_{r} \ar[rd]^g & \\
           X  & & Y  }$$
where $r$ is in $R$ and $g$ is a morphism in $\sfC$. For fixed $X$
and $Y$, the spans between them form a  category $\Roof(X,Y)$. The
morphisms in  $\Roof(X,Y)$ are morphisms  $T' \to T$ in $\sfC$ which
respect the two legs of the spans. By definition, two spans in
$\Roof(X,Y)$ give rise to the same morphism in
$\Hom_{R^{-1}\sfC}(X,Y)$ if and only if they are in the same
connected component of $\Roof(X,Y)$. That is, if they are connected
by a zig-zag of morphisms in $\Roof(X,Y)$. In other words,
  $$\Hom_{R^{-1}\sfC}(X,Y):= \pi_0\Roof(X,Y).$$

The composition of spans is defined in the obvious way. It is easy
to see that $R^{-1}\sfC$ satisfies the universal property of
localization.

\begin{rem}{\label{R:enhance}}
 We can enhance $R^{-1}\sfC$ to a bicategory by defining the
 the hom-category between $X$ and $Y$ to be $\Roof(X,Y)$. The
 localized category
 $R^{-1}\sfC$ is recovered from this bicategory by declaring all
 2-cells to be equalities. That is, by replacing the hom-categories
 $\Roof(X,Y)$ with the set $\pi_0\Roof(X,Y)$.
\end{rem}

Let $f, f' \: X \to Y$ be morphisms in $\sfC$. We say that $f, f'$
are $R$-{\em homotopic} if there is a  commutative diagram
      $$\xymatrix@=45pt@M=4pt{
           X \ar@/^/ [r]^{t} \ar@/^2pc/[rr]_{f}
             \ar@/_/ [r]_{t'} \ar@/_2pc/[rr]^{f'}
            &  V \ar[l] |-{r} \ar[r]^g & Y,   }$$
where $r$ is in $R$. Let $\sim$ be the equivalence relation on
$\Hom_{\sfC}(X,Y)$  generated by $R$-homotopy. We have a natural map
$$\eta \: \Hom_{\sfC}(X,Y)/_{\sim} \to \Hom_{R^{-1}\sfC}(X,Y)$$
 $$f \mapsto (\id,f).$$

\begin{lem}{\label{L:zigzag}}
  Assume that $X \in \sfC$ has the property that every morphism $r \: V
  \to X$ in $R$ admits a section. Then, for every $Y$ in $\sfC$,
  the natural map
   $$\eta \: \Hom_{\sfC}(X,Y)/_{\sim} \to \Hom_{R^{-1}\sfC}(X,Y)$$
  is a bijection.
\end{lem}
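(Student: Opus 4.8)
The plan is to prove surjectivity and injectivity of $\eta$ separately, in each case using the section hypothesis to replace an arbitrary span by one of the form $(\id,-)$ and to convert the zig-zags that define connected components of $\Roof(X,Y)$ into chains of $R$-homotopies. Before either half, I would record the one computation that drives everything: if $\rho \: T \to X$ lies in $R$ and $s,s'$ are two sections of $\rho$, then for any $\gamma \: T \to Y$ the maps $\gamma\circ s$ and $\gamma\circ s'$ are $R$-homotopic. Indeed, setting $V=T$, $r=\rho$, $g=\gamma$, $t=s$, $t'=s'$ in the definition of $R$-homotopy gives exactly such a diagram, since $\rho\circ s=\rho\circ s'=\id_X$. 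Consequently, to each span $(\rho,\gamma)$ one may attach a class $[\gamma\circ s]\in\Hom_{\sfC}(X,Y)/_{\sim}$ that is independent of the chosen section $s$ (a section exists because $\rho\in R$, the left leg of a span always lying in $R$).

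For surjectivity, suppose a morphism of $\Hom_{R^{-1}\sfC}(X,Y)$ is presented by a span $(\rho,\gamma)$ with apex $T$, and choose a section $s$ of $\rho$. Then $s \: X\to T$ satisfies $\rho\circ s=\id_X$ and trivially $\gamma\circ s=\gamma\circ s$, so $s$ is a morphism in $\Roof(X,Y)$ from the span $(\id,\gamma\circ s)$ to the span $(\rho,\gamma)$. Hence these two spans lie in the same connected component, and $\eta(\gamma\circ s)$ equals the given morphism. This shows every morphism is hit.

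For injectivity, assume $f,f'\:X\to Y$ satisfy $\eta(f)=\eta(f')$, so the spans $(\id,f)$ and $(\id,f')$ are joined by a zig-zag in $\Roof(X,Y)$ passing through intermediate spans $(\rho_i,\gamma_i)$. The key step is that each elementary arrow of the zig-zag preserves the attached class. Given an arrow $h\:T_1\to T_2$ from $(\rho_1,\gamma_1)$ to $(\rho_2,\gamma_2)$, i.e.\ $\rho_2\circ h=\rho_1$ and $\gamma_2\circ h=\gamma_1$, pick any section $s_1$ of $\rho_1$ and set $s_2:=h\circ s_1$; then $\rho_2\circ s_2=\rho_1\circ s_1=\id_X$, so $s_2$ is a section of $\rho_2$, and $\gamma_2\circ s_2=\gamma_1\circ s_1$ on the nose. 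Together with the section-independence established above, this forces the classes of $(\rho_1,\gamma_1)$ and $(\rho_2,\gamma_2)$ to coincide in $\Hom_{\sfC}(X,Y)/_{\sim}$, and the same argument applies verbatim if the arrow points the other way. Propagating along the whole zig-zag and noting that the endpoints $(\id,f)$ and $(\id,f')$ admit $\id_X$ as a section (so carry the classes $[f]$ and $[f']$), transitivity of $\sim$ yields $f\sim f'$.

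The main obstacle is the injectivity step: one must unwind what it means for $(\id,f)$ and $(\id,f')$ to lie in one connected component of $\Roof(X,Y)$ and show that a purely categorical zig-zag can be realized through the comparatively rigid relation of $R$-homotopy. The two elementary observations above, namely section-independence of the attached class and the transport of a section along a span morphism via $s_2=h\circ s_1$, are exactly what make this translation go through; everything else is routine bookkeeping with $\pi_0\Roof(X,Y)$.
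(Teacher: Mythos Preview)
Your proof is correct and follows the same overall strategy as the paper: surjectivity via a section of the left leg, and injectivity by analyzing how spans of the form $(\id,f)$ relate through $\Roof(X,Y)$. The paper's injectivity argument is terse---it asserts that a morphism in $\Roof(X,Y)$ linking $(\id,f)$ and $(\id,f')$ corresponds exactly to an $R$-homotopy and leaves the passage from a general zig-zag to a chain of $R$-homotopies implicit---whereas you make this step explicit by attaching to every span a section-independent class in $\Hom_{\sfC}(X,Y)/_{\sim}$ and checking it is preserved by each edge of a zig-zag. This invariant framing is a slightly cleaner way to organize the same computation, but the content is the same.
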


\begin{proof}
  Given a span $(r,g)$ from $X$ to $Y$, choose a section $s$ for
  $r$. Then, $g\circ s$, or rather the span $(\id,g\circ s)$,
  represents the same morphism in $R^{-1}\sfC$ as $(r,g)$. This
  shows that $\eta$ is surjective. To prove injectivity, consider
  $f, f' \in \Hom_{\sfC}(X,Y)$. It is easy to see that there is a morphism
  in $\Roof(X,Y)$ between $(\id,f)$ and $(\id,f')$ if and only
  if $f$ and $f'$ are $R$-homotopic. Therefore, the $R$-homotopy
  classes of morphisms in $\Hom_{\sfC}(X,Y)$ correspond precisely to
  the connected components of $\Roof(X,Y)$. This proves injectivity.
\end{proof}

\section{Appendix II: Relative Kan extensions}{\label{A:Kan}}

We introduce a right Kan extension construction in the setting of
fibered 2-categories $\pi \: \mfU \to \mfB$. In the case where the
base  2-category $\mfB$ is just a point and $\mfU$ is a category,
this reduces to the usual right Kan extension as defined in
(\cite{MacLane}, $\S$X).

Let $\mfB$ and $\mfU$ be 2-categories, and let $\pi \: \mfU \to
\mfB$ be a fibered 2-category (not necessarily in 2-groupoids). It
is sometimes more convenient to think of this fibered 2-category as
the contravariant 2-category-valued lax functor $\mfB \to
2\mathfrak{Cat}$ which assigns to an object $b$ in $\mfB$ the fiber
$\mfU(b)$ of $\mfU$ over $b$. (In our application
($\S$\ref{SS:diagram}), $\pi$ is fibered in 1-categories, so the
corresponding lax functor takes values in $\mathfrak{Cat}$.) For
every morphism $f\: a \to b$ in $\mfB$, we have the {\em pull-back}
functor $f^{\Box} \: \mfU(b) \to \mfU(a)$. (To define $f^{\Box}$ we
need to make some choices,  but the resulting functor $f^{\Box}$
will be unique up to higher coherences.) The laxness of our
2-category-valued functor means that, for every pair of composable
morphisms $f$ and $g$ in $\mfB$, we have a natural transformation
$g^{\Box}\circ f^{\Box} \Rightarrow (f\circ g)^{\Box}$, and that
these transformations satisfy the usual coherence conditions. The
fibered 2-category $\mfU$ can be recovered from this lax functor by
applying the Grothendieck construction.

Let $\pi \: \mfU \to \mfB$ be a fibered 2-category, and  $\Gamma$ a
2-category. Let $b$ be an object in $\mfB$. Consider a diagram $P \:
\Gamma \to \mfU(b)$, that is, a lax functor from $\Gamma$ to
$\mfU(b)$. Assume that $P$ has a limit $\liminv P$ in $\mfU(b)$. Let
$\underline{\liminv P} \: \Gamma \to \mfU(b)$ denote the constant
functor with value $\liminv P$, and let $\Upsilon_P \:
\underline{\liminv P} \Rightarrow P$ be the universal
transformation.

 \begin{defn}{\label{D:limit}}
  The notation being as above, we say that the limit $\liminv P$ of
  $P$ in $\mfU(b)$ is {\bf global}, if for every morphism $f \: a \to
  b$ in $\mfB$,  and  every object $k \in \mfU(a)$, the functor
     $$\Hom_{\mfU,f}(k,\liminv P) \to
          \operatorname{Trans}_f(\underline{k},P)$$
     $$\tilde{f} \mapsto \Upsilon_P\circ\tilde{f}$$
  is an equivalence of categories. Here, $\Hom_{\mfU,f}$ means
  those morphisms in $\mfU$ which map to $f$ under $\pi$. Similarly,
  $\operatorname{Trans}_f$ stands for those transformations $\Phi$
  (of functors $\Gamma \to \mfU$) such that, for every $d \in \sfD$,
  the image of the morphism $\Phi(d)$ under $\pi$ is equal to $f$.
  (Note that both sides are 1-categories. In the case where $\pi \:
  \mfU \to \mfB$ is fibered in 1-categories, they are
  actually equivalent to sets.)
\end{defn}

\begin{rem}{\label{R:relativelimit}}
 The limit  $\liminv P$ being global is equivalent to
  requiring the pullback $f^{\Box}
  (\liminv P) \in \mfU(a)$ to be the limit
  of the pullback diagram $f^{\Box}\circ  P \: \Gamma \to \mfU(a)$,
  for every $f \: a \to b$.
\end{rem}

\begin{defn}{\label{D:relativecomplete}}
  Let $\pi \: \mfU \to \mfB$ be a fibered 2-category, and  $\Gamma$
   a 2-category. Let $b$ be an object  in $\mfB$. We say that $\pi \:
  \mfU \to \mfB$ is $\Gamma$-{\bf complete} at $b$, if every diagram
  $P \: \Gamma \to \mfU(b)$ has a limit and the limit is global
  (Definition \ref{D:limit}). More generally, let $\sfD$ and $\sfE$ be
  2-categories,\footnote{We make an exception to our notational convention
  ($\S$\ref{S:Notation}) that Sans Serif symbols stand for 1-categories,
  because in our application ($\S$\ref{SS:diagram}) $\sfD$ and
  $\sfE$ will be 1-categories.}  and $\sth \: \sfE \to
  \sfD$ and $p \: \sfD \to \mfB$  functors. We say that $\pi \: \mfU
  \to \mfB$ is $\sth$-{\bf complete} at $p$ if it
  is $(d\!\downarrow\!\sfE)$-complete at $p(d)$ for every $d \in \sfD$.
\end{defn}

The comma 2-category $(d\!\downarrow\!\sfE)$ appearing in the above
definition is defined as follows. The objects are pairs
$(e,\alpha)$, where $e \in \Ob\sfE$ and $\alpha \: d \to \sth(e)$ is
a morphism in $\sfD$. A morphism $(e,\alpha)\to (e',\alpha')$ in
$(d\!\downarrow\!\sfE)$ is a morphism $\gamma \: e \to e'$ in
$\sfE$, together with a 2-morphism $\tau \: \sth(\gamma)\circ\alpha
\Rightarrow \alpha'$. A 2-morphism from $(\gamma,\tau)$ to
$(\gamma',\tau')$ is a 2-morphism $\epsilon \: \gamma \Rightarrow
\gamma'$ in $\sfE$ which makes the 2-cell in $\sfD$ consisting of
$\tau$, $\tau'$, and $\sth(\epsilon)$ commute. (Note that in the
case where $\sfD$ and $\sfE$ are 1-categories,
$(d\!\downarrow\!\sfE)$ is also a 1-category and it coincides with
the one defined in  \cite{MacLane}, II.6. If, furthermore, $\sfE$ is
discrete, then $(d\!\downarrow\!\sfE)$ is also discrete.)

\begin{defn}{\label{D:relativprod}}
   Let $\pi \: \mfU \to \mfB$ be a fibered 2-category, and $I$
   an index set. We say that $\pi \: \mfU \to \mfB$ is {\bf
  $I$-complete} (or it {\bf has global $I$-products}) if it is
  $I$-complete at every $b \in \mfB$. Here, we think of $I$ as the
  discrete 2-category with objects $I$ and no nontrivial
  morphisms or 2-morphisms.
\end{defn}

The following lemma shows that completeness with respect to a
functor is invariant under base change of fibered categories.

\begin{lem}{\label{L:baseextension}}
  Let $\pi \: \mfU \to \mfB$ be a fibered 2-category, and
 $\sth \: \sfE \to \sfD$ a functor of 2-categories. Let $\mfB' \to \mfB$
  be a functor, and let $\pi' \: \mfU' \to \mfB'$ be the pullback fibered
  2-category. Let $p' \: \sfD \to \mfB'$ be a functor and $p \: \sfD
  \to \mfB$ the composite functor. Suppose that $\pi \: \mfU \to \mfB$ is
  $\sth$-complete at $p$. Then, $\pi' \: \mfU' \to \mfB'$ is
  $\sth$-complete at $p'$
\end{lem}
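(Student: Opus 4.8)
The plan is to unwind the definition of $\sth$-completeness and reduce everything to the two structural features of the pull-back $\mfU'=\mfB'\times_{\mfB}\mfU$: first, that the fiber of $\mfU'$ over an object $b'\in\mfB'$ is canonically equivalent to the fiber of $\mfU$ over its image $b\in\mfB$; and second, that pull-back along a morphism $f'$ of $\mfB'$ corresponds, under these fiber equivalences, to pull-back along the image morphism $f$ of $\mfB$. Both are immediate from the construction of the pull-back fibered 2-category, and together they let us transport limits and their globality back and forth between $\pi$ and $\pi'$.

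First I would fix an object $d\in\sfD$ and set $\Gamma=(d\!\downarrow\!\sfE)$, $b'=p'(d)$, and $b=p(d)$, observing that $b$ is exactly the image of $b'$ under $\mfB'\to\mfB$ since $p$ is the composite. By Definition \ref{D:relativecomplete} it suffices to prove that $\pi'$ is $\Gamma$-complete at $b'$. Given a diagram $P'\:\Gamma\to\mfU'(b')$, I transport it along the equivalence $\mfU'(b')\simeq\mfU(b)$ to obtain a diagram $P\:\Gamma\to\mfU(b)$. Since $\pi$ is $\sth$-complete at $p$, it is in particular $\Gamma$-complete at $b=p(d)$, so $P$ has a global limit $\liminv P$ in $\mfU(b)$; transporting back along the same equivalence produces a limit $\liminv P'$ of $P'$ in $\mfU'(b')$.

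It then remains to check that $\liminv P'$ is global. Here I would invoke Remark \ref{R:relativelimit}, which reduces globality to verifying that for every morphism $f'\:a'\to b'$ in $\mfB'$ the pull-back $(f')^{\Box}(\liminv P')$ is the limit of $(f')^{\Box}\circ P'$ in $\mfU'(a')$. Writing $f\:a\to b$ for the image of $f'$ in $\mfB$, the compatibility of pull-backs identifies $(f')^{\Box}(\liminv P')$ with $f^{\Box}(\liminv P)$ and $(f')^{\Box}\circ P'$ with $f^{\Box}\circ P$ under the fiber equivalences over $a'$ and $b'$. Since $\liminv P$ is global in $\mfU$, Remark \ref{R:relativelimit} applied to $\pi$ shows that $f^{\Box}(\liminv P)$ is the limit of $f^{\Box}\circ P$; transporting back along $\mfU'(a')\simeq\mfU(a)$ then shows $(f')^{\Box}(\liminv P')$ is the limit of $(f')^{\Box}\circ P'$, as required. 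As $d$ was arbitrary, $\pi'$ is $\sth$-complete at $p'$.

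The only genuine work is the 2-categorical bookkeeping: because the fiber identifications $\mfU'(b')\simeq\mfU(b)$ are equivalences rather than strict isomorphisms and the pull-back functors $f^{\Box}$ are themselves determined only up to coherent 2-isomorphism, one must confirm that transporting a limit along an equivalence again yields a limit and that the two compatibility squares relating $(f')^{\Box}$ to $f^{\Box}$ commute up to the relevant coherent 2-cells. These verifications are formal but are where the care lies; notably, the completeness hypothesis on $\pi$ enters only through the single instance $\Gamma=(d\!\downarrow\!\sfE)$ at $b=p(d)$, so no new completeness property of $\pi$ beyond the one assumed is ever needed.
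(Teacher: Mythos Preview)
Your argument is correct and is precisely the unwinding that the paper has in mind: the paper's own proof consists of the single word ``Straightforward.'' You have simply made explicit the two facts about the pullback fibered 2-category (identification of fibers and compatibility of pull-back functors) that make the lemma immediate.
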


\begin{proof}
  Straightforward.
\end{proof}

Let $\sfD$ and $\sfE$ be 2-categories, and fix a ``base'' functor $p
\: \sfD \to \mfB$. Let $\sth \: \sfE \to \sfD$ be a functor and
denote $p\circ \sth$ by $q$.
   $$\xymatrix@=8pt@M=6pt{
     \sfE     \ar[dd]_{\sth}  \ar[rrdd]^q & & \mfU \ar[dd]^{\pi}  \\
                     &  & \\
               \sfD   \ar[rr]_{p}& &  \mfB  }$$
Let $\mfU^{\sfD}_{p}$ be the 2-category of strict lifts of $p$ to
$\mfU$. That is, an object in $\mfU^{\sfD}_{p}$ is a functor $P \:
\sfD \to \mfU$ such that $\pi \circ P=p$. (The latter is an
equality, not a natural transformation of functors.) Define
$\mfU^{\sfE}_{q}$ similarly. Note that in the case where $\pi \:
\mfU \to \mfB$ is fibered in 1-categories $\mfU^{\sfD}_{p}$ and
$\mfU^{\sfE}_{q}$ are 1-categories.

\begin{prop}[Relative right Kan extension]{\label{P:Kan}}
  Notation being as in the previous paragraph, suppose that $\pi \: \mfU \to
  \mfB$ is $\sth$-complete at $p$ (Definition \
  \ref{D:relativecomplete}). Then, the functor $\sth^* \:
  \mfU^{\sfD}_{p} \to \mfU^{\sfE}_{q}$ obtained by precomposing with $\sth$
  admits a right adjoint
      $R\sth \: \mfU^{\sfE}_{q} \to  \mfU^{\sfD}_{p}$.
\end{prop}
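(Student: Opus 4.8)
The plan is to imitate the classical pointwise construction of a right Kan extension, carrying it out fiberwise inside the fibers $\mfU(b)$ and then gluing the fibers together by means of the globality of the limits. The globality hypothesis packaged in Definition \ref{D:limit} is precisely what makes the fiberwise limits cohere into a functor over $p$.

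First I would construct $R\sth(G)$ objectwise. Fix $G\in\mfU^{\sfE}_{q}$ and an object $d\in\sfD$. For each object $(e,\alpha)$ of the comma category $(d\!\downarrow\!\sfE)$, the morphism $p(\alpha)\: p(d)\to q(e)$ in $\mfB$ lets me pull back $G(e)\in\mfU(q(e))$ to an object $p(\alpha)^{\Box}G(e)\in\mfU(p(d))$. A morphism $(e,\alpha)\to(e',\alpha')$, given by $\gamma\: e\to e'$ together with $\tau\: \sth(\gamma)\circ\alpha\Rightarrow\alpha'$, produces a morphism in the fiber $\mfU(p(d))$ as follows: compose the cartesian arrow $p(\alpha)^{\Box}G(e)\to G(e)$ with $G(\gamma)\: G(e)\to G(e')$ to obtain an arrow lying over $q(\gamma)\circ p(\alpha)\cong p(\alpha')$, and then factor it through the cartesian arrow $p(\alpha')^{\Box}G(e')\to G(e')$ by the universal property of cartesian morphisms. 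This yields a diagram $P_d\: (d\!\downarrow\!\sfE)\to\mfU(p(d))$. Since $\pi$ is $(d\!\downarrow\!\sfE)$-complete at $p(d)$, I set $R\sth(G)(d):=\liminv P_d$, an object of $\mfU(p(d))$ whose limit is global.

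Next I would promote $d\mapsto R\sth(G)(d)$ to a functor $\sfD\to\mfU$ lying strictly over $p$. A morphism $f\: d\to d'$ induces a functor $f^*\: (d'\!\downarrow\!\sfE)\to(d\!\downarrow\!\sfE)$, $(e,\alpha')\mapsto(e,\alpha'\circ f)$, under which $P_{d'}\circ f^*$ is naturally identified with $p(f)^{\Box}\circ P_{d'}$ via $p(\alpha'\circ f)^{\Box}=p(f)^{\Box}\circ p(\alpha')^{\Box}$. Composing the limit cone of $R\sth(G)(d)$ at the objects $(e,\alpha'\circ f)$ with the cartesian arrows over $p(f)$ gives a transformation, over $p(f)$, from the constant diagram $\underline{R\sth(G)(d)}$ to $P_{d'}$; by the globality of $\liminv P_{d'}$ (Definition \ref{D:limit}, or equivalently Remark \ref{R:relativelimit}) this corresponds to a unique morphism $R\sth(G)(f)\: R\sth(G)(d)\to R\sth(G)(d')$ over $p(f)$. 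Uniqueness in the same universal property makes $R\sth(G)$ respect composition and identities and fixes its action on the $2$-cells of $\sfD$. The assignment $G\mapsto R\sth(G)$ is then upgraded to a $2$-functor $R\sth\: \mfU^{\sfE}_{q}\to\mfU^{\sfD}_{p}$ by defining it on transformations and $2$-cells through the same universal property. Finally I would establish the adjunction $\sth^*\dashv R\sth$ by exhibiting a natural equivalence
$$\Hom_{\mfU^{\sfE}_{q}}(\sth^*P,G)\;\simeq\;\Hom_{\mfU^{\sfD}_{p}}(P,R\sth(G)),$$
for $P\in\mfU^{\sfD}_{p}$ and $G\in\mfU^{\sfE}_{q}$. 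By the defining universal property of $R\sth(G)(d)=\liminv P_d$, a transformation $P\Rightarrow R\sth(G)$ amounts objectwise to a cone from $P(d)$ to $P_d$, i.e.\ a compatible family of arrows $P(d)\to p(\alpha)^{\Box}G(e)$ indexed by $(d\!\downarrow\!\sfE)$; evaluating such a family at the objects $(e,\id_{\sth(e)})$ recovers a transformation $\sth^*P\Rightarrow G$, and this evaluation is a bijection because the full cone is determined by these values through the morphisms of $(d\!\downarrow\!\sfE)$ together with the fact that $P$ lies over $p$. Concretely, the counit $\varepsilon\: \sth^*R\sth(G)\Rightarrow G$ is read off from the cone projections at the objects $(e,\id_{\sth(e)})$, the unit $\eta\: P\Rightarrow R\sth\,\sth^*P$ is induced by the arrows $P(\alpha)$ factored through the relevant cartesian arrows, and the two triangle identities reduce to uniqueness in the global-limit universal property.

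I expect the main obstacle to be the coherence bookkeeping of the second and fourth steps rather than any genuinely new idea: because $\pi$ is only a fibered $2$-category, the pullback functors $f^{\Box}$ compose solely up to the coherence $2$-cells $g^{\Box}\circ f^{\Box}\Rightarrow(f\circ g)^{\Box}$, and one must verify that these are compatible with the comparison isomorphisms $P_{d'}\circ f^*\cong p(f)^{\Box}\circ P_{d'}$ and with the cone transformations throughout. The notion of a global limit is designed precisely to absorb this bookkeeping; indeed, in the application of $\S$\ref{SS:diagram}, where $\pi$ is fibered in $1$-categories and the comma categories $(d\!\downarrow\!\sfE)$ are discrete, all of these $2$-cells degenerate and the construction collapses to the relative products already used there, so the generality here costs only diagram-chasing and not substance.
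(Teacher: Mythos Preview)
Your proposal is correct and follows essentially the same pointwise-limit construction as the paper: you define $R\sth(G)(d)$ as the limit over $(d\!\downarrow\!\sfE)$ of the pulled-back diagram, use globality to produce the morphisms $R\sth(G)(f)$, and then verify the adjunction. The only cosmetic difference is that the paper first base-changes $\mfU$ along $p$ (invoking Lemma \ref{L:baseextension}) so that it may assume $\sfD=\mfB$ and $p=\id$, writing $\alpha^{\Box}$ rather than $p(\alpha)^{\Box}$; you skip this reduction and work directly, and you spell out the unit, counit, and triangle identities where the paper simply leaves them to the reader.
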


\begin{proof}
  Observe that in the case where $\mfB$ is the trivial 2-category
 with one object and $\mfU$ is a 1-category, the proposition
 reduces to the existence of the usual right Kan extension. In fact,
 the construction of  $R\sth$ is simply the imitation of the one
 (\cite{MacLane}, $\S$X). We briefly outline how it is done.

 By base extending $\mfU$ along $p$, we may assume that $\sfD=\mfB$,
 $p=\id$, $q=\sth$. Fix a functor $P \: \sfE \to \mfU$ such that
 $P\circ\pi=\sth$. The desired right Kan extension  $R\sth(P) \: \sfD
 \to \mfU$ of $P$ will then be a section to the projection
 $\pi \: \mfU \to \sfD$.
      $$\xymatrix@R=35pt@C=50pt@M=6pt{  & \mfU \ar[d]^{\pi}  \\
          \sfE  \ar[ru]^P \ar[r]_{\sth} &
           \sfD \ar@/^/@{..>}[u]^(0.35){R\sth(P)}  }$$

 For an object $d \in \sfD$, define a functor $\Psi_d \:
 (d\!\downarrow\!\sfE) \to \mfU(d)$ by the rule
 $\Psi_d(e,\alpha):=\alpha^{\Box}(P(e))$. Observe two things: 1) by
 assumption, $P(e)$ sits above $\sth(e)$, so it makes sense to
 pull it back along $\alpha$; 2)  for every $(e,\alpha)$ there is a
 natural (cartesian) morphism $\eta_{(e,\alpha)} \:
 \alpha^{\Box}(P(e)) \to P(e)$ in $\mfU$ over $\alpha$. (If you want,
 this is the definition of the pullback
 $\alpha^{\Box}(P(e))$.)

  Define the functor $R\sth(P) \: \sfD \to \mfU$ by the rule $d
 \mapsto \liminv\Psi_d$. Note that, by definition, $\liminv\Psi_d$
 is global (Definition \ref{D:limit}).

 Given a morphism $f \: a \to b$ in $\sfD$, the morphism
 $R\sth(P)(f)$ in $\mfU$ is defined as follows. Let $\underline{\liminv\Psi_{a}} \:
 (b\!\downarrow\!\sfE) \to \mfU(a)$ be the constant functor with value
 $\liminv\Psi_{a}$. There is a natural transformation of functors
 $\underline{\liminv\Psi_{a}} \Rightarrow \Psi_b$ over $f$ induced by
 the morphisms
 $\eta_{(e,\alpha)}$ discussed two paragraphs above. Since
 $\liminv\Psi_{b}$ is a global limit, this transformation induces a
 natural morphism $\liminv\Psi_{a} \to \liminv\Psi_{b}$ over $f$. We
 define $R\sth(P)(f)$ to be this morphism. It is readily verified
 that $R\sth(P)$ is a functor (and, obviously,
 $R\sth(P)\circ\pi=\id_{\mfB}$).

 We leave it to the reader to verify that
 $R\sth(P)$ is the desired right Kan extension.
\end{proof}

The right Kan extension $R\sth(P)$ can be illustrated by the
following diagram.
   $$\xymatrix@=14pt@M=6pt{
         \sfE   \ar[rr]^P   \ar[dd]_{\sth}& & \mfU \ar[dd]^{\pi}  \\
                     &  \ar@{=>}[ul]_(0.4){\varepsilon} & \\
   \sfD   \ar[rr]_{p} \ar@{..>}[rruu]_(0.4){R\sth(P)}& &  \mfB  }$$
The lower triangle and the big square in this diagram are strictly
commutative. The natural transformation $\varepsilon$ in the upper
triangle is the counit of adjunction.

\begin{cor}{\label{C:Kan}}
  Let $\pi \: \mfU \to \mfB$ be a fibered 2-category which has
  global products (Definition \ref{D:relativprod}). Let $\sth \: \sfE
  \to \sfD$ be a functor with $\sfD$ a 1-category and $\sfE$ a
  discrete category (i.e., $\sfE$ has no nontrivial morphisms).
  Then, for every functor $p \: \sfD \to \mfB$,  the functor $\sth^*
  \: \mfU^{\sfD}_{p} \to \mfU^{\sfE}_{q}$ obtained by precomposing by
  $\sth$  admits a right adjoint
      $$R\sth \: \mfU^{\sfE}_{q} \to  \mfU^{\sfD}_{p}.$$
  Furthermore, if $\sth$ is so that for every $d \in \sfD$ there
  are only finitely many arrows emanating from $d$ whose target is in
  the image of $\sth$, then the right adjoint $R\sth$ exists
  under the weaker assumption that $\pi \: \mfU \to \mfB$ has global
  finite products.
\end{cor}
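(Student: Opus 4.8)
The plan is to deduce this directly from Proposition \ref{P:Kan}, whose only hypothesis is that $\pi \: \mfU \to \mfB$ be $\sth$-complete at $p$. By Definition \ref{D:relativecomplete}, verifying this amounts to showing that for every object $d \in \sfD$ the fibered $2$-category $\pi$ is $(d\!\downarrow\!\sfE)$-complete at $p(d)$, i.e.\ that every diagram $(d\!\downarrow\!\sfE) \to \mfU(p(d))$ admits a global limit in the sense of Definition \ref{D:limit}. So the whole argument reduces to understanding the comma category $(d\!\downarrow\!\sfE)$ under the standing hypotheses and then matching it against the available (finite) global products.

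First I would analyze the shape of $(d\!\downarrow\!\sfE)$. Since $\sfE$ is discrete, the only morphisms $\gamma \: e \to e'$ in $\sfE$ are identities; and since $\sfD$ is a $1$-category, the only $2$-morphism $\tau \: \sth(\gamma)\circ\alpha \Rightarrow \alpha'$ available is the identity, forcing $\alpha = \alpha'$. Hence $(d\!\downarrow\!\sfE)$ carries only identity morphisms, i.e.\ it is itself a discrete category, exactly as recorded in the parenthetical remark following the definition of the comma $2$-category. Writing $I_d := \Ob(d\!\downarrow\!\sfE)$, a diagram $(d\!\downarrow\!\sfE) \to \mfU(p(d))$ is just an $I_d$-indexed family of objects and its global limit is precisely a global $I_d$-product. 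Thus $\sth$-completeness of $\pi$ at $p$ is equivalent to $\pi$ having global $I_d$-products at $p(d)$ for every $d \in \sfD$.

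For the first statement this is immediate: $I_d$ is some index set, and the assumption that $\pi$ has global products (Definition \ref{D:relativprod}) furnishes global $I$-products for every index set $I$, in particular for $I = I_d$. Hence $\pi$ is $\sth$-complete at $p$, and Proposition \ref{P:Kan} produces the right adjoint $R\sth$. For the second statement I would observe that an object of $(d\!\downarrow\!\sfE)$ is a pair $(e,\alpha)$ with $\alpha \: d \to \sth(e)$ an arrow of $\sfD$ whose target lies in the image of $\sth$; when $\sth$ is injective on objects (as in the intended application of Lemma \ref{L:diagramadjoint}, where $\sth$ is the identity on objects) the set $I_d$ is in bijection with the arrows emanating from $d$ whose target lies in the image of $\sth$. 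The extra hypothesis makes this set finite, so $(d\!\downarrow\!\sfE)$ is a \emph{finite} discrete category, and only global \emph{finite} products are needed; Proposition \ref{P:Kan} then applies under the weaker completeness assumption.

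The entire argument is a matter of unwinding definitions, so I expect no serious obstacle; the one point genuinely requiring care is the identification of $(d\!\downarrow\!\sfE)$ with a (finite) discrete index set. Here one must confirm both that the $1$-categoricity of $\sfD$ collapses the would-be $2$-morphisms of the comma category (so that it is discrete rather than merely a groupoid or $2$-category) and that the count of its objects correctly matches the finiteness hypothesis on arrows out of $d$ — it is the injectivity of $\sth$ on objects that turns this count into an exact bijection rather than a mere upper bound, and this is the subtlety worth flagging in a fully rigorous write-up.
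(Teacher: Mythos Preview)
Your proposal is correct and follows exactly the paper's approach: verify $\sth$-completeness at $p$ by identifying $(d\!\downarrow\!\sfE)$ with a discrete index set, then invoke Proposition~\ref{P:Kan}. The paper compresses all of this into the single sentence ``It is obvious that $\pi \: \mfU \to \mfB$ is $\sth$-complete for every $p$''; you have simply unpacked what makes it obvious, and your flag about injectivity of $\sth$ on objects (needed to match $|I_d|$ exactly with the count of arrows in the finiteness hypothesis) is a genuine point the paper's one-line proof passes over in silence.
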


\begin{proof}
  It is obvious that $\pi \: \mfU \to \mfB$ is $\sth$-complete for
  every $p \: \sfD \to \mfB$. The result follows from Proposition
  \ref{P:Kan}.
\end{proof}

\providecommand{\bysame}{\leavevmode\hbox
to3em{\hrulefill}\thinspace}
\providecommand{\MR}{\relax\ifhmode\unskip\space\fi MR }
\providecommand{\MRhref}[2]{%
  \href{http://www.ams.org/mathscinet-getitem?mr=#1}{#2}
} \providecommand{\href}[2]{#2}

\end{document}